\newtheorem{Theorem}{Theorem}[section]
\theoremstyle{plain}
\newtheorem{Lemma}[Theorem]{Lemma}
\newtheorem{Definition}[Theorem]{Definition}
\newtheorem{Remark}[Theorem]{Remark}
\newtheorem{Corollary}[Theorem]{Corollary}
\newtheorem{Proposition}[Theorem]{Proposition}
\numberwithin{equation}{section}
\title{Branching Random Walk \\ in an inhomogeneous breeding potential}
\author{Sergey Bocharov and Simon C. Harris\\University of Bath}
\begin{document}
\maketitle

\begin{abstract}
We consider a continuous-time branching random walk in the inhomogeneous breeding potential $\beta |\cdot|^p$, where $\beta > 0$, $p \geq 0$. 
We prove that the population almost surely explodes in finite time if $p > 1$ and doesn't explode if $p \leq 1$. In the non-explosive cases, we determine the asymptotic behaviour
of the rightmost particle.
\end{abstract}
\section{Introduction and main results}
We consider a branching system with single particles moving independently according to a continuous-time random walk on 
$\mathbb{Z}$. The random walk makes jumps of size $1$ up or down at constant rate $ \lambda > 0$ in each direction. A particle 
currently at position $y \in \mathbb{Z}$ is independently replaced by two new particles at the parent's 
position at instantaneous rate $\beta |y|^p$, where $\beta > 0$ and $p \geq 0$ are some given constants. 

We denote the set of particles present in the system at time $t$ by $N_t$. If $u \in N_t$ then the position 
of a particle $u$ at time $t$ is $X^u_t$ and its path up to time $t$ is $(X^u_s)_{0 \leq s \leq t}$. 
The law of the branching process started with a single initial particle at $x \in \mathbb{Z}$ is denoted by 
$P^x$ with the corresponding expectation $E^x$ and the natural filtration of the process is denoted by $(\mathcal{F}_t)_{t \geq 0}$.

Let us define the explosion time of the population as
\[
T_{explo} = \sup \{ t : |N_t| < \infty \} \text{.}
\]
We have the following dichotomy for $T_{explo}$ in terms of $p$, the exponent of the breeding potential.
\begin{Theorem}[Explosion criterion]
\label{criticalrw}
For the inhomogeneous BRW started at any $x \in \mathbb{Z}$: \newline
a) If $p \leq 1$  then $T_{explo} = \infty \ P^x$-a.s. \newline
b) If $p > 1$  then $T_{explo} < \infty \ P^x$-a.s.
\end{Theorem}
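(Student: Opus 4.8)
The two parts call for quite different methods: a first‑moment estimate for (a), and an explicit ``cascade'' construction for (b) which is then upgraded to an almost‑sure statement by a recurrence argument.

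\emph{Part (a): no explosion when $p\le1$.} Here I would use a first‑moment bound. By the many‑to‑one lemma, $E^x|N_t|=E^x\!\left[\exp\!\left(\beta\int_0^t|\xi_s|^p\,\diffd s\right)\right]$, where $(\xi_s)_{s\ge0}$ is a single continuous‑time nearest‑neighbour random walk on $\mathbb{Z}$ (rate $\lambda$ each way) started from $x$. Writing $J_s$ for its number of jumps by time $s$ — a rate‑$2\lambda$ Poisson process — we have $|\xi_s|\le|x|+J_t$ for $s\le t$, and $p\le1$ gives $z^p\le1+z$, so $\beta\int_0^t|\xi_s|^p\,\diffd s\le\beta t(1+|x|+J_t)$. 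Hence
\[
E^x|N_t|\ \le\ e^{\beta t(1+|x|)}\,E\!\left[e^{\beta t J_t}\right]\ =\ e^{\beta t(1+|x|)}\exp\!\big(2\lambda t(e^{\beta t}-1)\big)\ <\ \infty
\]
for every $t<\infty$. Thus $|N_t|<\infty$ $P^x$‑a.s.\ for each fixed $t$; since $|N_t|$ is non‑decreasing in $t$, intersecting over rational times gives $|N_t|<\infty$ for all $t$ simultaneously, i.e.\ $T_{explo}=\infty$ $P^x$‑a.s.

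\emph{Part (b): explosion when $p>1$.} The first moment is useless here (in fact $E^x|N_t|=\infty$ for every $t>0$), so a genuine construction is needed. The key claim is: there exist $M_0$, $t_0<\infty$, $c>0$ with $P^y(T_{explo}\le t_0)\ge c$ whenever $|y|\ge M_0$. Fix $m=|y|\ge M_0$ and follow the sites $m_j:=m+j$: inductively, at a time $T_j$ at least $K_j$ particles sit at $m_j$ (with $K_0=1$, $T_0=0$). Round $j$ has a \emph{breeding} phase — restrict to the descendants of those particles that stay inside $\{m_j,m_j+1\}$; since each such particle leaves that interval at rate exactly $\lambda$ while breeding keeps it inside, their number stochastically dominates a linear birth--death chain with per‑capita birth rate $\beta m_j^p$ and death rate $\lambda$, which is highly supercritical ($\beta m_j^p\gg\lambda$), so after a short time $h_j'$ it has at least $L_j$ particles — and a \emph{travel} phase — wait a further time $w_j$ for at least $K_{j+1}$ of these now‑numerous particles to reach $m_{j+1}$ and lodge there (helped by the fact that at $m_{j+1}$ the breeding rate $\beta m_{j+1}^p$ overwhelms the rate $\lambda$ of stepping back). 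Each phase succeeds with high probability for suitable $L_j,h_j',w_j$: the later breeding phases by a concentration (Chebyshev) bound with failure $\lesssim1/K_j$ once $K_j$ is large, the first few rounds — which start from few particles — by a gambler's‑ruin estimate for the birth--death chain (succeeding with probability $\to1$ as $m\to\infty$), and the travel phases by a binomial tail bound once $\lambda L_jw_j\gtrsim K_{j+1}$. One then checks the parameters can be chosen with $K_j,L_j\to\infty$, $t_0:=\sum_j(h_j'+w_j)<\infty$ and total failure probability $<1$; on the resulting positive‑probability event that every round succeeds, at least $\sum_{i\le j}(L_i-K_i)\to\infty$ branching events have occurred by time $T_j\uparrow t_0$, so $|N_{t_0}|=\infty$, i.e.\ $T_{explo}\le t_0$.

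The hard part is exactly this construction. The two technically delicate points are the stochastic‑domination step — cleanest via a per‑capita coupling that pairs each particle's rate‑$\lambda$ exit clock with a death of the birth--death chain and regards its excess breeding rate as extra births — and the concentration estimate that the travel phase really does deposit $K_{j+1}$ particles at $m_{j+1}$; and one must make all the bookkeeping close simultaneously. It is here that $p>1$ enters: the breeding times are of order $h_j'\sim(\log j)/(\beta m_j^p)$, whose sum over $j$ is finite precisely because $\sum_j j^{-p}\log j<\infty$ when $p>1$.

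\emph{From $c>0$ to almost sure.} Finally, a single traced lineage (always follow, say, the first child) is a rate‑$2\lambda$ random walk on $\mathbb{Z}$, hence recurrent, so $P^x$‑a.s.\ some particle hits $\{-M_0,M_0\}$ in finite time; by the strong Markov property and the claim, $P^x(T_{explo}<\infty)\ge c$ for every $x$, and since $\{T_{explo}\le t\}\uparrow\{T_{explo}<\infty\}$ there is, for each of the finitely many $x$ with $|x|<M_0$, a finite $t_1(x)$ with $P^x(T_{explo}\le t_1(x))\ge c/2$. Setting $t^*:=\max\big(t_0,\max_{|x|<M_0}t_1(x)\big)$ and $c_0:=c/2$ gives $\inf_{x\in\mathbb{Z}}P^x(T_{explo}\le t^*)\ge c_0>0$. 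Partitioning time into windows $[kt^*,(k+1)t^*)$ and using that at least one particle is always alive, the Markov property yields $P^x\big(T_{explo}>(k+1)t^*\mid\mathcal{F}_{kt^*}\big)\le1-c_0$ on $\{T_{explo}>kt^*\}$, so $P^x(T_{explo}>nt^*)\le(1-c_0)^n\to0$, i.e.\ $T_{explo}<\infty$ $P^x$‑a.s.
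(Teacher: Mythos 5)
Your proof of part~(a) is essentially the paper's first‑moment argument via Many‑to‑One, with a minor streamlining (monotonicity of $|N_t|$ and $E^x|N_t|<\infty$ for all $t$ replacing the paper's intermediate $0$--$1$ law and induction step). For part~(b), however, you take a genuinely different route. The paper argues by contradiction: assuming no explosion by some time $T$, it runs the spine machinery on $[0,T)$ with the blowing‑up drift $\theta^+(s)=(T-s)^{-c}$, $c>p/(p-1)$, uses the spine decomposition to show the additive martingale $M_\theta$ stays bounded under $Q_\theta$, deduces $Q_\theta\ll P$ on $\mathcal{F}_T$, and transfers the $Q_\theta$‑a.s.\ divergence $n_t\to\infty$ of spine births to positive $P$‑probability of explosion before $T$, contradicting the assumption. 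You instead build explosion directly by a cascade of breed‑then‑advance rounds at sites $m_j=m+j$, dominating each breeding stage below by a supercritical linear birth--death chain (birth rate $\beta m_j^p$, exit rate $\lambda$), and drawing the $p>1$ threshold from the time summability $\sum_j(\log j)/m_j^p<\infty$; you then upgrade positive probability to certainty via recurrence of a traced lineage plus a geometric‑trials renewal argument (your substitute for the paper's branching‑property $0$--$1$ law, Corollary~\ref{zeroone}). Both routes are sound. The paper's is shorter and modular, reusing the spine/change‑of‑measure toolkit it already builds for Theorem~\ref{main}, and localizing the role of $p>1$ in the single exponent inequality $c>p/(p-1)$ needed for the spine‑term bound. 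Yours is more elementary and produces an explicit explosive event, but it demands considerably heavier bookkeeping --- the parameter choices $K_j,L_j,h_j',w_j$, the concentration estimates for the breeding and travel phases, and a careful domination coupling --- which your sketch, as you yourself note, leaves to be carried out.
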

Let us also define the process of the rightmost particle as 
\[
R_t := \sup_{u \in N_t} X^u_t \ \text{, } \qquad t \geq 0 \text{.}
\]
For $p \in [0, 1]$, we prove the following result about the asymptotic behaviour of $R_t$.
\begin{Theorem}[Rightmost particle asymptotics]
\label{main}
For the inhomogeneous BRW and any $x \in \mathbb{Z}$: \newline
a) If $p = 0$ then
\begin{equation}
\label{pzero}
\lim_{t \to \infty} \frac{R_t}{t} = \lambda( \hat{\theta} - \frac{1}{\hat{\theta}} ) \qquad P^x \text{-a.s.,}
\end{equation}
where $\hat{\theta}$ is the unique solution of
\begin{equation}
\label{thetacritical}
\big( \theta - \frac{1}{\theta} \big) \log \theta - \big( \theta + \frac{1}{\theta} \big) + 2 = \frac{\beta}{\lambda} \quad \text{on } (1, \infty) 
\end{equation}
b) If $p \in (0, 1)$ then
\begin{equation}
\label{pzero_one}
\lim_{t \to \infty} \Big( \frac{\log t}{t} \Big)^{\hat{b}} R_t = \hat{c} 
\qquad P^x \text{-a.s.,}
\end{equation}
where $\hat{b} = \frac{1}{1-p}$ and $\hat{c} = \Big( \frac{\beta(1-p)^2}{p} \Big)^{\hat{b}} $. \newline
c) If $p = 1$ then
\begin{equation}
\label{pone}
\lim_{t \to \infty} \dfrac{\log R_t}{\sqrt{t}} = \sqrt{2 \beta} \qquad P^x \text{-a.s.}
\end{equation}
\end{Theorem}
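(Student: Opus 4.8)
The unifying idea is to reduce the inhomogeneous problem to the \emph{homogeneous} one by localising the breeding rate, so that the motion of the front is governed by an autonomous ODE. For a BRW with the same $\pm1$, rate-$\lambda$ walk but a \emph{constant} breeding rate $\Lambda>0$, let $v^{*}(\Lambda)$ be the a.s.\ asymptotic speed of its rightmost particle. A first-moment computation via the many-to-one lemma, $E^{x}[\#\{u\in N_{t}:X^{u}_{t}\ge vt\}]=e^{\Lambda t}\,P^{x}(\xi_{t}\ge vt)$ with $\xi$ a single walk, together with the walk's large-deviation rate function $I(v)=\sup_{\theta>0}\{v\log\theta-\lambda(\theta+\tfrac1\theta-2)\}$, identifies $v^{*}(\Lambda)$ as the unique $v>0$ with $I(v)=\Lambda$; equivalently $v^{*}(\Lambda)=\lambda(\theta-\tfrac1\theta)$ for the $\theta$ solving the $\Lambda/\lambda$-analogue of \eqref{thetacritical}, from which one also reads off $v^{*}(\Lambda)\sim\Lambda/\log\Lambda$ as $\Lambda\to\infty$. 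Part (a) is exactly this homogeneous statement with $\Lambda=\beta$. For (b) and (c) the driving heuristic is that the inhomogeneous front obeys $\dot R_{t}=v^{*}(\beta R_{t}^{\,p})$: while the front moves from level $L$ to $L+\mathrm dL$ the ambient breeding rate near it is $\asymp\beta L^{p}$, so $\mathrm dt=\mathrm dL/v^{*}(\beta L^{p})$. Integrating and inserting $v^{*}(\Lambda)\sim\Lambda/\log\Lambda$ yields $t\sim\tfrac{p}{\beta(1-p)}R_{t}^{1-p}\log R_{t}$ for $p\in(0,1)$, i.e.\ $R_{t}\sim(\tfrac{\beta(1-p)^{2}}{p})^{1/(1-p)}(t/\log t)^{1/(1-p)}$, and $(\log R_{t})^{2}\sim2\beta t$ for $p=1$, matching the stated constants; the same ODE blows up in finite time when $p>1$, consistent with Theorem~\ref{criticalrw}. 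This constant is strictly smaller than what a naive annealed first/second-moment method predicts in case (b) (there a single walk rushes straight to $y$ and harvests the full rate $\beta y^{p}$ for time $\asymp t$), so moment methods alone do not suffice and the recursion below is genuinely needed.

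First I would establish the homogeneous case (part (a)) with enough quantitative control to use it as a building block: the upper bound $\limsup R_{t}/t\le v^{*}(\beta)$ from the first moment above plus Markov's inequality, Borel--Cantelli along integer times and a crude interpolation; the matching lower bound either by citing Biggins' theorem on BRW or, to keep the paper self-contained, via the additive martingale $W^{(\theta)}_{t}=\sum_{u\in N_{t}}\theta^{X^{u}_{t}}e^{-(\psi(\theta)+\beta)t}$ with $\psi(\theta)=\lambda(\theta+\tfrac1\theta-2)$, which for $\theta<\hat\theta$ is uniformly integrable with a.s.\ positive limit, forcing particles near $\lambda(\theta-\tfrac1\theta)t$ and, on letting $\theta\uparrow\hat\theta$, giving $\liminf R_{t}/t\ge v^{*}(\beta)$. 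I will want these facts --- the front speed and the rough location and number of extremal particles --- with constants \emph{uniform} over the whole range of breeding rates $\Lambda=\beta L^{p}$ that later appears; this uniform control is where most of the real work lies.

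For the lower bounds in (b) and (c) I would run a recursive coupling along a slowly growing time grid $s_{0}<s_{1}<\dots$, producing levels $L_{k}$ with $L_{k}\ge(1-\eps_{k})\rho(s_{k})$ ($\rho$ the ODE solution) such that a.s.\ eventually some particle lies at $\ge L_{k}$ at time $s_{k}$. Given such a particle at time $s_{k}$, the sub-BRW it roots, restricted to descendants that stay in $[\alpha L_{k},\infty)$ (killing those exiting on the left), dominates from below a homogeneous BRW on $[\alpha L_{k},\infty)$ with breeding rate $\ge\beta(\alpha L_{k})^{p}$ started from level $\ge L_{k}$; since $s_{k+1}-s_{k}$ is small compared with $((1-\alpha)L_{k})^{2}$ the killing barrier is not yet felt, and the homogeneous estimate gives, with probability tending to $1$ (at a rate that by the uniformity above does not degrade too fast in $k$), that its front reaches $\alpha L_{k}+(1-\eps)v^{*}(\beta(\alpha L_{k})^{p})(s_{k+1}-s_{k})=:L_{k+1}$ by time $s_{k+1}$. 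A Borel--Cantelli argument then makes every step succeed eventually; letting $\alpha\uparrow1$ and $\eps_{k}\downarrow0$ with $\sum_{k}\eps_{k}<\infty$ reproduces the ODE without error accumulation; and tracking the front \emph{within} each interval gives $R_{t}\ge(1-\eps)\rho(t)$ for all large $t$.

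For the upper bounds I would run the dual recursion: assuming inductively $R_{s_{k}}\le M_{k}$ with $M_{k}\le(1+\eps_{k})\rho(s_{k})$, observe that on $[s_{k},s_{k+1}]$, as long as no particle has crossed $M_{k+1}$, every particle breeds at rate $\le\beta M_{k+1}^{\,p}$, so the process is dominated by a homogeneous BRW of rate $\beta M_{k+1}^{\,p}$ started from the time-$s_{k}$ configuration (contained in $[-M_{k},M_{k}]$); its first moment of $\#\{X^{u}_{s_{k+1}}\ge M_{k+1}\}$ is $\le|N_{s_{k}}|\,e^{\beta M_{k+1}^{p}(s_{k+1}-s_{k})}P(\xi_{s_{k+1}-s_{k}}\ge M_{k+1}-M_{k})$, which is summable in $k$ once $(M_{k+1}-M_{k})/(s_{k+1}-s_{k})>(1+\eps)v^{*}(\beta M_{k+1}^{p})$, i.e.\ once $M_{k+1}=M_{k}+(1+\eps)v^{*}(\beta M_{k}^{p})(s_{k+1}-s_{k})$ --- again the ODE, with $|N_{s_{k}}|$ handled on an event of probability $\to1$ or absorbed into the summability --- so Borel--Cantelli gives $R_{s_{k}}\le M_{k}$ eventually and hence $R_{t}\le M_{k+1}\le(1+\eps)\rho(t)$ for all large $t$. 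The point that makes the two recursions meet at the stated constant rather than at the larger annealed one is exactly that capping the breeding rate at the current front on each short interval keeps the per-interval first moment honest. The remaining obstacles are essentially bookkeeping: choosing the grid $(s_{k})$ so that $\rho$ varies by only a controlled amount per interval while the number of steps stays small enough for the $\prod(1\pm\eps_{k})$ errors to vanish (geometric-type spacing, $O(\log t)$ steps, for $p\in(0,1)$; spacing $s_{k}\sim(\delta k)^{2}$ for $p=1$ since there $\log\rho\sim\sqrt{2\beta t}$); making the ``a.s.\ eventually'' steps rigorous, which needs a population lower bound carried through the induction and a conditional Borel--Cantelli estimate; and, underneath all of it, securing the homogeneous-BRW inputs with the uniform-in-$\Lambda$ control flagged above, which I expect to be the hardest single ingredient.
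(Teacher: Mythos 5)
Your proposal takes a genuinely different route from the paper. The paper works directly with spine changes of measure and additive martingales: for the lower bound it picks a \emph{time-dependent} bias $\theta(\cdot)$ whose spine trajectory follows the critical curve $f^*$, proves $M_\theta$ is UI (and a.s.\ positive) via the spine decomposition, and deduces $Q_\theta\sim P$, so some particle follows the spine's path; for the upper bound it argues by contradiction — if the bound failed then infinitely many subtrees rooted on the recurrent spine would cross the critical curve, and tracking the crossing particle $w_n$ in $M_\theta(J_n)$ forces $M_\theta$ to diverge, contradicting martingale convergence. Your scheme instead reduces everything to the homogeneous BRW, derives the ODE $\dot\rho=v^*(\beta\rho^p)$, and runs a step-by-step recursion with domination in both directions. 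The ODE heuristic and the constants it produces are correct (I checked: separating $p\,\rho^{-p}\log\rho\,\mathrm d\rho=\beta\,\mathrm dt$ gives exactly $\hat c$ for $p\in(0,1)$, and $q\dot q=\beta$ with $q=\log\rho$ gives $\sqrt{2\beta t}$ for $p=1$), and you have correctly identified why the annealed first moment overshoots.

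That said, two places in the upper-bound recursion are where the approach would need genuine work beyond ``bookkeeping.'' First, your per-interval first-moment bound is multiplied by $|N_{s_k}|$, and $\log E|N_t|$ grows like $t^{\hat b}/(\log t)^{\hat b p}$ in Case B — of the \emph{same} polynomial order as $\beta M_k^p\Delta s_k$ — so whether the small gain $\exp\{-\Delta s_k(I((1+\varepsilon)v^*)-\beta M_{k+1}^p)\}$ beats the population is delicate and forces a careful choice of grid ($\Delta s_k\asymp s_k/\log s_k$), grid-dependent $\varepsilon_k$, and a conditional Borel–Cantelli on a high-probability event for $|N_{s_k}|$; this is precisely what the paper's martingale-divergence trick sidesteps, since there the bookkeeping is absorbed into the single fact that $M_\theta$ is a nonnegative martingale and so cannot diverge. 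Second, in the lower-bound recursion the condition ``$s_{k+1}-s_k$ small compared with $((1-\alpha)L_k)^2$'' is the wrong scaling: particles behind the front in a BRW with killing spread ballistically at roughly $v^*(\beta(\alpha L_k)^p)$, not diffusively, so the barrier at $\alpha L_k$ is felt in time $\asymp (1-\alpha)L_k/v^*$, not $(1-\alpha)^2 L_k^2$; this happens to be satisfiable but as written the criterion would mislead you into taking the interval too long. You also need a quantitative, uniform-in-$\Lambda$ statement about the homogeneous front (both speed and a tail bound on the failure probability per step) with control over the whole range $\Lambda=\beta L_k^p\to\infty$, which you flag yourself as the hardest ingredient; the paper's time-dependent $\theta^+(\cdot)$ encodes exactly this information and never needs a uniform homogeneous input. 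In short: your strategy is sound and more ``hands-on,'' and would be a reasonable alternative proof if carried out in full, but the paper's spine argument is substantially shorter precisely because it avoids both of the technical difficulties above.
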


\noindent Note that Part a) of Theorem \ref{main} is a special case of a result proved by Biggins  \cite{27,26}.

We can compare Theorems \ref{criticalrw} and \ref{main} for this branching random walk in an inhomogenous branching potential with some analogous known results for Branching Brownian Motion.
Consider a model for branching Brownian motion in an inhomogeneous potential where single particles move as standard Brownian motions, each branching into two new particles at instantaneous rate $\beta|x|^p$ when at position $x$, where $\beta > 0$, $p \geq 0$.
This inhomogeneous BBM has been considered in It\^o \&  McKean \cite{4},  Harris \& Harris \cite{3} and Berestycki et al. \cite{25a, 25} where, in particular, we find the following results:

\begin{Theorem}[It\^o \&  McKean \cite{4}, Section 5.14.]
\label{criticalbbm}
Consider a BBM in the potential $\beta|\cdot|^p$, $\beta > 0$, $p \geq 0$ 
started from $x \in \mathbb{R}$: \newline
a) If $p \leq 2$  then $T_{explo} = \infty \ P^x$-a.s. \newline
b) If $p > 2$  then $T_{explo} < \infty \ P^x$-a.s.
\end{Theorem}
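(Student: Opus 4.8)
The plan is to prove a) and b) by entirely different mechanisms: a first‑moment (many‑to‑one) estimate showing the expected population size stays finite for part a), and an explicit construction driving the population out to spatial infinity — and hence to a blow‑up — in finite time for part b). Throughout write $(B_s)_{s\geq 0}$ for a single Brownian motion and recall the many‑to‑one identity
\[
E^x\big[|N_t|\big] \;=\; E^x\Big[\exp\Big(\beta\int_0^t |B_s|^p\,\diffd s\Big)\Big],
\]
obtained by the same first‑moment computation as in the random‑walk case, with the Brownian expectation replacing the random‑walk one.

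For part a), note first that particles never die, so $t\mapsto |N_t|$ is non‑decreasing; consequently, if $E^x[|N_t|]<\infty$ for all $t$ in some interval $[0,t_0)$ uniformly over the starting point $x$, then $T_{explo}\geq t_0$ $P^x$‑a.s., and applying the branching Markov property successively at times $t_0/2, t_0, 3t_0/2,\dots$ (each of the finitely many particles alive at such a time independently surviving the next $t_0/2$) promotes this to $T_{explo}=\infty$ $P^x$‑a.s. It is therefore enough to bound the right‑hand side of the many‑to‑one identity for small $t$, uniformly in $x$. If $p<2$, Young's inequality gives $|y|^p\leq \eps y^2+C_\eps$ for every $\eps>0$, so the exponent is dominated by $\eps\beta\int_0^t B_s^2\,\diffd s+C_\eps\beta t$, and the classical Cameron--Martin formula for $E^x[\exp(c\int_0^t B_s^2\,\diffd s)]$ — which is finite precisely when $ct^2<\pi^2/8$, with an $x$‑dependent prefactor that does not affect finiteness — then yields $E^x[|N_t|]<\infty$ for all $t$ after choosing $\eps$ small enough. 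If $p=2$ the same formula applies directly with $c=\beta$ and gives $E^x[|N_t|]<\infty$ for every $t<\pi/(2\sqrt{2\beta})$, uniformly in $x$, which is exactly what the reduction above needs.

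For part b), I would first reduce the almost‑sure statement to a positive‑probability one. Set $q(x)=P^x(T_{explo}=\infty)$. Following the eldest‑child lineage traces out a Brownian path, which a.s. hits any prescribed level $L$ at a finite time $\tau_L$; the strong Markov property at $\tau_L$ applied to the subtree rooted at the particle first reaching $L$ gives $q(x)\leq q(L)$ for all $x$ and all $L$. On the other hand, on $\{T_{explo}>t\}$ the population splits into the independent subtrees rooted at the particles alive at time $t$, whence $q(x)=E^x\big[\Ind_{\{T_{explo}>t\}}\prod_{u\in N_t}q(X^u_t)\big]$; since $|N_t|\to\infty$ $P^x$‑a.s. (the first branching time is a.s. finite because $\int_0^\infty|B_s|^p\,\diffd s=\infty$ a.s., and the same applies to every subsequent branching), letting $t\to\infty$ and then $L\to\infty$ forces $q\equiv 0$ as soon as $q(L)<1$ for a single $L$. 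Hence it suffices to exhibit, from one far‑out starting point, a positive probability of explosion in finite time.

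The core of the argument is this construction, a ``breeding avalanche'' along a diverging sequence of levels. Fix a large starting level $L$, set $\ell_k=2^kL$ and $\Delta_k=C\ell_k^{1-p/2}$ for a large constant $C$, and note that since $p>2$ the exponent $1-p/2$ is negative, so $\sum_k\Delta_k<\infty$ (and in fact $\to 0$ as $L\to\infty$). I would show inductively that, with probability tending to $1$ as $L\to\infty$, by time $S_k=\sum_{j<k}\Delta_j$ there are at least $m_k$ particles in $[\ell_k,2\ell_k]$, with $m_k\to\infty$. For the inductive step, a particle in $[\ell_k,2\ell_k]$ breeds at rate at least $\beta\ell_k^p$ while it remains in that strip, hence produces of order $\exp(\beta\ell_k^p\Delta_k)$ descendants there over the next $\Delta_k$ units of time, and each such descendant independently has probability at least of order $\exp(-\ell_k^2/\Delta_k)$ of being past $\ell_{k+1}$ at time $S_{k+1}$; the choice of $\Delta_k$ is tuned precisely so that $\beta\ell_k^p\Delta_k-\ell_k^2/\Delta_k=(\beta C-C^{-1})\ell_k^{p/2+1}$, which grows geometrically in $k$ once $C>\beta^{-1/2}$. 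So the expected number of descendants of a given source particle reaching $\ell_{k+1}$ within time $\Delta_k$ blows up, and a second‑moment/concentration estimate carried out inside the $m_k$ disjoint subtrees rooted at the source particles — using independence across subtrees and the standard many‑to‑two formula within each — turns this into the required high‑probability bound $m_{k+1}\geq m_k\exp(\cdots)\to\infty$. On the intersection of these events the population is infinite by time $\sum_k\Delta_k<\infty$, so $T_{explo}<\infty$, completing b). I expect this concentration step to be the only genuine obstacle: one must handle the dependence structure of branching Brownian motion precisely enough to convert the immediate first‑moment blow‑up into a true lower bound on the number of particles reaching each successive level, uniformly enough in $k$ for the failure probabilities to sum (or at least to have product bounded away from $0$). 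Everything else — the many‑to‑one and Cameron--Martin computation in a), and the reduction in b) — is a textbook Gaussian estimate or a soft branching‑Markov argument.
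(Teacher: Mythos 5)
The paper does not prove Theorem~\ref{criticalbbm}: it is cited from It\^o and McKean~\cite{4}, so there is no internal proof to compare your proposal against directly. The closest analogue in the paper is the proof of Theorem~\ref{criticalrw} (the BRW explosion criterion) in Section~5, so I will compare against that.

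Your part~a) follows essentially the same route as Section~5.1: a many-to-one reduction $E^x|N_t| = \mathbb{E}^x\exp(\beta\int_0^t|B_s|^p\,\mathrm{d}s)$, a bound on that single-particle exponential functional, and the iterated branching-Markov argument (Corollary~\ref{texplo}) to promote ``no explosion before $t_0$'' to ``no explosion ever''. For BBM the moment is controlled by the Cameron--Martin formula, and the crucial observation is the one you make, namely that the finiteness threshold $t_0 = \pi/(2\sqrt{2\beta})$ does not depend on the starting point $x$ (only the prefactor does), which is exactly what the iteration needs. The reduction of $p<2$ to $p=2$ via Young's inequality is clean. I see no gap in part~a).

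Your part~b) takes a genuinely different route from the paper's Section~5.2, which uses a spine change of measure: with $\theta^+(s)=(T-s)^{-c}$ and $c>p/(p-1)$ the spine is forced to infinity at the deterministic time $T$ under $\tilde Q_\theta$; the spine decomposition together with the asymptotics of the fission times along the spine gives $\limsup_{t\to T}M_\theta(t)<\infty$ $Q_\theta$-a.s.; and Lemma~\ref{durrett_lem2_} then yields $Q_\theta\ll P$ on $\mathcal F_T$, transferring $\{|N_t|\to\infty\text{ as }t\to T\}$ to an event of positive $P$-probability. That machinery never needs a concentration estimate on particle counts: only a one-sided bound on a conditional expectation along the spine.

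Your proposal instead attempts a direct ``breeding avalanche''. The soft reduction via the $0$--$1$ law and the subtree-at-a-far-out-level argument is correct and parallels Corollary~\ref{zeroone} and Propositions~\ref{xy}, \ref{propexplo}. The exponent bookkeeping for $\Delta_k=C\ell_k^{1-p/2}$ is right: $\beta\ell_k^p\Delta_k - \ell_k^2/\Delta_k = (\beta C - C^{-1})\ell_k^{p/2+1}$, which is positive and geometric in $k$ once $C>\beta^{-1/2}$, while $\sum_k\Delta_k<\infty$ precisely because $p>2$. But the concentration step you flag is a genuine gap, not a routine one. The first-moment blow-up of the expected number of descendants past $\ell_{k+1}$ does not by itself give a small failure probability at level $k$: the offspring of a single source share ancestral history, and the many-to-two second moment picks up a dominant contribution from the common initial segment, so Paley--Zygmund on its own will typically return only a polynomially (not stretched-exponentially) small failure probability, which need not sum over $k$. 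You would need to decouple, for instance by splitting $[S_k,S_{k+1}]$ into a pure-breeding phase (Yule lower bound on the subtree size, conditional on the source staying in the strip, which is cheap since $\sqrt{\Delta_k}\ll\ell_k$) followed by an independent-displacement phase, and then take a union over the $m_k$ independent source subtrees. This is workable but technically heavier than the spine argument, which is a machine built precisely to avoid this kind of second-moment bookkeeping. As written, part~b) is a plausible sketch with the acknowledged hole at its center, rather than a complete proof.
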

\begin{Theorem}[Harris \& Harris \cite{3}]
\label{mainBBM}
Consider the BBM model with $\beta > 0$, $p \in [0, 2]$, $x \in \mathbb{R}$. \newline
a) If  $p \in [0,2)$ then
\begin{equation}
\label{pzero_two}
\lim_{t \to \infty} \frac{R_t}{t^{\hat{b}}} \ = \ \hat{a} \qquad P^x \text{-a.s.}
\end{equation}
where $\hat{b} = \frac{2}{2-p}$ and $\hat{a} = \Big( \frac{\beta}{2}(2 - p)^2 \Big)^{\frac{1}{2 - p}} $. \newline
b) If $p = 2$ then 
\begin{equation}
\label{ptwo}
\lim_{t \to \infty} \dfrac{\log R_t}{t} = \sqrt{2 \beta} \qquad P^x \text{-a.s.}
\end{equation}
\end{Theorem}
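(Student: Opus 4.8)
\medskip
\noindent\emph{Proof proposal (Theorem \ref{mainBBM}).}
The case $p=0$ of part a) is classical — it is the McKean/Biggins front speed $\sqrt{2\beta}$ for dyadic BBM with constant branching rate $\beta$ — so the work is in $p\in(0,2]$. The plan is to prove matching $\limsup$ and $\liminf$ bounds for $R_t$: the upper bound by a first–moment (many–to–one) argument with a moving barrier, the lower bound by a truncated second–moment (many–to–two) argument along a space–time tube. Both are first carried out along a geometric sequence of times $t_n$ (so that Borel–Cantelli applies), and the intermediate times are filled in using that the rightmost particle cannot drop, over an interval $[t_n,t_{n+1}]$, by more than the range of a Brownian motion on that interval, which is negligible on scale $t^{\hat b}$ since $\hat b=\tfrac2{2-p}\ge1$.

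\medskip
\noindent\emph{Upper bound.} Fix $\eps>0$. For $p<2$ set $b(s):=(1+\eps)\hat a\,s^{\hat b}$, and for $p=2$ set $b(s):=e^{(1+\eps)\sqrt{2\beta}\,s}$. I would show that $P^x\big(\exists\,s\le t,\ u\in N_s:\ |X^u_s|\ge b(s)\big)$ decays geometrically in $t$. Along a lineage let $\sigma$ be the first exit time from $(-b(\cdot),b(\cdot))$; by the many–to–one lemma applied along the stopping line $\{\sigma\le t\}$, the expected number of such lineages is $E^x\big[\exp\big(\beta\!\int_0^\sigma|\xi_s|^p\,ds\big);\ \sigma\le t\big]$, where $\xi$ is a single Brownian motion. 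On $\{\sigma=s\}$ one has $|\xi_u|\le b(u)$ for $u<s$, so the exponent is at most $\beta\!\int_0^s b(u)^p\,du$, while the probability cost of steering $\xi$ below the curved boundary and up to $b(s)$ is, after a Girsanov change of measure adding the drift $\dot b(u)$, of order $\exp\big(-\tfrac12\!\int_0^s\dot b(u)^2\,du\big)$ times a polynomial factor. Summing over $s$, the expectation is controlled once $\beta\!\int_0^s b^p<\tfrac12\!\int_0^s\dot b^2$ for all large $s$. The tightest admissible barrier solves $\beta b^p=\tfrac12\dot b^2$, i.e.\ $\dot b=\sqrt{2\beta}\,b^{p/2}$, whose solution is precisely $b(s)=\hat a\,s^{\hat b}$ for $p<2$ and $b(s)=b(0)e^{\sqrt{2\beta}\,s}$ for $p=2$; the factor $(1+\eps)$ makes the inequality strict because the gain side scales with power $p$ against power $2$ on the cost side. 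This yields $\limsup_t R_t/t^{\hat b}\le\hat a$ (resp.\ $\limsup_t\log R_t/t\le\sqrt{2\beta}$) a.s.

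\medskip
\noindent\emph{Lower bound.} For $\eps>0$ take the target trajectory $\phi(s):=(1-\eps)\hat a\,s^{\hat b}$ (resp.\ $\log\phi(s):=(1-\eps)\sqrt{2\beta}\,s$), a thin space–time tube $T(\delta)$ around $\phi$, and let $Z_t$ count the particles whose path lies in $T(\delta)$ throughout $[0,t]$ and which finish in a small window near $\phi(t)$. By many–to–one, $E^x[Z_t]\asymp\exp\big(\beta\!\int_0^t\phi^p-\tfrac12\!\int_0^t\dot\phi^2\big)$, which grows exponentially because now the gain beats the cost. By the many–to–two (spine) decomposition, $E^x[Z_t^2]$ is a diagonal term equal to $E^x[Z_t]$ plus an integral over possible branch times $r$ of $\beta\cdot(\text{potential at }\phi(r))\cdot(\text{first moment from }(\phi(r),r))^2$; the tube is calibrated so that early branchings do not dominate, whence $E^x[Z_t^2]=O(E^x[Z_t]^2)$ and Paley–Zygmund gives $P^x(Z_t\ge1)\ge c>0$. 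Running to time $t/2$, where the population is large, and applying this to the conditionally independent descendants of each particle present then boosts the probability to $1$; Borel–Cantelli along $t_n$ and the interpolation above give the matching $\liminf$. (An equivalent route runs an additive martingale $\sum_{u\in N_t}w(X^u_t,t)$ built from explicit sub/super–solutions of $\partial_t w=\tfrac12w''+\beta|x|^pw$ together with a spine change of measure tilting one lineage along $\phi$, as in Harris \& Harris and Berestycki et al.)

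\medskip
\noindent\emph{Main obstacle.} The crux is the sharp constrained–Brownian estimate behind the barrier: one must pair the occupation functional $\int_0^\sigma|\xi_s|^p\,ds$ with the probability of $\{\xi$ stays below $b(\cdot)$ and first hits it at $\sigma\}$, and show this probability is of the Girsanov order $\exp(-\tfrac12\!\int_0^\sigma\dot b^2)$ — the cost of \emph{tracking} the convex boundary, strictly larger than the cost $b(\sigma)^2/2\sigma$ of an unconstrained excursion — which is what forces the barrier ODE $\beta b^p=\tfrac12\dot b^2$ and thereby the exact constant $\hat a$ rather than the larger value a naive first moment would give. The mirror difficulty on the lower side is the many–to–two bound, where one must rule out domination by atypical early splits. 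The boundary case $p=2$ is treated in parallel, the power–law barrier being replaced throughout by the exponential $e^{\sqrt{2\beta}s}$.
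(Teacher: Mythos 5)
Your proposal is correct in outline, but it takes a genuinely different route from the one Harris \& Harris \cite{3} actually use (and which this paper mimics for the BRW in Section~6). Their proof is through additive martingales and a spine change of measure. For the lower bound they take the one--parameter family of additive martingales $M_\theta$ corresponding to tilts $\theta$ along the candidate curve, prove uniform integrability of $M_\theta$ for subcritical $\theta$ by bounding the spine decomposition $E^{\tilde Q_\theta}(M_\theta(t)\mid\tilde{\mathcal G}_\infty)=spine(t)+sum(t)$, deduce (via the Durrett decomposition, Lemma~\ref{durrett_lem2_}, and a product--martingale $0$--$1$ law) that $Q_\theta$ and $P$ are equivalent on $\mathcal F_\infty$, and then read off the existence of a particle tracking $\int_0^t\lambda(\theta^+-\theta^-)$ from the $\tilde Q_\theta$--LLN for the spine. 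For the upper bound they do \emph{not} run a first--moment barrier estimate at all: they assume for contradiction that some particle crosses a slightly supercritical boundary infinitely often, isolate those boundary--crossing lineages, and show that the corresponding \emph{supercritical} additive martingale $M_\theta(J_n)$ diverges along the crossing times $J_n$; since $M_\theta$ is a positive martingale it must converge, giving the contradiction. This sidesteps entirely what you identify as the main obstacle (the sharp constrained--Brownian ``track the convex boundary'' estimate behind the many--to--one upper bound), replacing it with the soft fact that positive martingales converge. Conversely, your lower bound through a many--to--two second moment, Paley--Zygmund, and bootstrapping from a large population at time $t/2$ is closer in spirit to Berestycki et al.\ \cite{25,25a} than to Harris \& Harris; it is more work for the bare rightmost--particle statement, but it buys additional information (almost--sure growth rates of the number of particles along paths) that the spine/UI argument does not directly give.

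Two small technical remarks on your writeup, neither fatal. First, the event $\{\exists\,s\le t,\ u\in N_s:|X^u_s|\ge b(s)\}$ is increasing in $t$, so its probability cannot ``decay geometrically in $t$''; you want either $P(\exists\,u\in N_t:X^u_t\ge b(t))$ summable along $t_n$, or $P(\exists\,s\ge t:\ R_s\ge b(s))\to 0$, and the barrier must be shifted away from the origin (e.g.\ $b(s)=(1+\eps)\hat a\,s^{\hat b}+C$) so that it is not immediately crossed at $s=0$. Second, a strictly geometric sequence $t_n=\rho^n$ loses a constant factor $\rho^{\hat b}$ in the interpolation step since $\phi(t_{n+1})/\phi(t_n)\to\rho^{\hat b}\ne1$; you either need $t_{n+1}/t_n\to1$ (e.g.\ $t_n=n$), or you let $\rho\downarrow1$ at the end. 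These are cosmetic fixes; the structure of the argument is sound.
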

Comparing results, it can be seen that the inhomogeneous Branching Random Walk shows quite a different behaviour from the inhomogeneous Branching Brownian Motion, both in terms of the explosion criteria and regarding the asymptotic growth of the rightmost particle position.


We shall give a heuristic argument to help explain Theorems \ref{criticalrw} - \ref{mainBBM} in Section 2. The rest of the paper will then 
contain the detailed proofs of Theorems \ref{criticalrw} and \ref{main}.
In Section 3 we introduce a family of one-particle martingales. We also present 
some other relevant one-particle results, which will be used in later sections. 
Section 3 is self-contained and can be read out of the context of branching 
processes. In Section 4 we recall some standard techniques used in the analysis of branching systems, which include 
spines, additive martingales and martingale changes of measure. 
In Section 5 we prove Theorem \ref{criticalrw} about the explosion time 
using standard spine methods. 
Section 6 is devoted to the proof of Theorem \ref{main} about the rightmost particle 
using the spine methods again.
\section{Heuristics}
Theorems \ref{criticalrw} - \ref{mainBBM} are concerned with \emph{almost sure} explosion and \emph{almost sure} rightmost particle asymptotics. 
We can can informally recover analogous \emph{expectation} results with careful use of the well known Many-to-One Lemma (for example, see \cite{2}), which reduces the expectation of the sum of functionals 
of particles alive at time $t$ to the expectation of a single particle. 

In particular, the expected number of particles alive at time $t$ in the branching system is 
\begin{equation}
\label{eq_Nt}
E^x |N_t| = \mathbb{E}^x e^{\int_0^t \beta(X_s) \mathrm{d}s} =  \mathbb{E}^x e^{\int_0^t \beta|X_s|^p \mathrm{d}s}
\end{equation}
where $(X_t)_{t \geq 0}$ is the single-particle process under $\mathbb{P}^x$. 
It is then relatively straightforward to check that if $(X_t)_{t \geq 0}$ is a Brownian motion, the expected number of particles at time $t$
is: finite for all $t>0$ if $p<2$;  finite for $t<\hat t$ and infinite for $t\geq \hat t$ for some constant $\hat t$ when $p=2$; and, infinite for all $t>0$ if $p>2$. 
Whereas, if $(X_t)_{t \geq 0}$ is a continuous-time random walk then the expected number of particles at time $t$  is: 
finite for all $t>0$ if $p<1$; and,  infinite for all $t>0$ if $p>1$. 
These computations give the critical value of $p$ for explosion of the expected numbers of particles, 
and suggest the almost sure explosion criteria found in Theorems \ref{criticalrw} and \ref{criticalbbm}

The expected number of particles following 'close' to a given trajectory $f$ up to time $t$ is
\begin{equation}
\label{eq_Rt}
E^x \Big( \sum_{u \in N_t} \mathbf{1}_{ \{ X^u_s \approx f(s) \ \forall s \in [0,t] \} } \Big) = \mathbb{E}^x 
\Big( \mathbf{1}_{ \{ X_s \approx f(s) \ \forall s \in [0,t] \} } e^{\int_0^t \beta |X_s|^p \mathrm{d}s} \Big) \text{.}
\end{equation}
If $(X_t)_{t \geq 0}$ is a continuous-time random walk then using heuristic methods 
which involve large deviations theory for L\'evy processes (for example, see \cite{123}), we find  
\[
\log \mathbb{E}^x \Big( \mathbf{1}_{ \{ X_s \approx f(s) \ \forall s \in [0,t] \} } e^{\int_0^t \beta |X_s|^p \mathrm{d}s} \Big) \sim 
I_t(f):= \int_0^t \beta f(s)^p - \Lambda \big( f'(s) \big) \mathrm{d}s \text{,}
\]
where $ \Lambda : [0, \infty) \to [0, \infty)$ is the rate function of the random walk given by 
\[
\Lambda(x) = 2 \lambda + x \log \big( \frac{\sqrt{x^2 + 4 \lambda^2} + x}{2 \lambda} \big) - 
\sqrt{x^2 + 4 \lambda^2} \sim x \log x \text{ as } x \to \infty \text{.}
\]
(See Schilder's theorem for large deviations of paths in Brownian motion, where $\Lambda(x) = \frac{1}{2}x^2$.) Hence the expected number of particles following the curve $f$ either grows 
exponentially or decays exponentially in $t$ depending on the growth rate of 
$f$. 

Further, we anticipate that the almost sure number of particles that have stayed close to path $f$
over large time period $[0,t]$ will be roughly of order $\exp\{I_t(f)\}$ \emph{as long as there have not been any extinction events along the path}, corresponding to the growth rate always remaining positive with $I_s(f)>0$ for all $s\in(0,t]$. 
See Berestycki et al. \cite{25} where such almost sure growth rates along paths are made rigorous for inhomogeneous BBM.

Thus, in order to find the almost sure asymptotic rightmost particle position, for  $t$ large we would like to find $\sup f(t)$ where the supremum is taken over all paths such that no extinction occurs, that is, over paths $f$ with  $I_s(f)>0$ for all $s\in(0,t]$. 
In fact, it turns out that the optimal path $f^*$ for the rightmost position then satisfies $I_s(f^*)=0$ for all $s\in(0,t]$, that is, $f^*$ solves the equation
\[
\Lambda \big( {f^*}'(s) \big) = \beta f^*(s)^p.
\]
Solving this equation for the inhomogenous BRW leads exactly to the asymptotics of the rightmost particle as given in Theorem \ref{main}. 
Although we will not make the above heuristics rigorous for the BRW in this article, our more direct proof of Theorem \ref{main}, which we give in Section 6, will involve showing that 
there almost surely exists a particle staying close to the critical curve $f^*$.

\section{Single-particle results}
In this section we introduce a family of martingales for continuous-time random walks. 
Throughout this section the time set for all the processes is assumed to be $[0, T)$, where 
$T \in (0, \infty]$ is deterministic.

Suppose we are given a Poisson process $(Y_t)_{t \in [0,T)} \stackrel{d}{=} PP(\lambda)$ 
under a probability measure $ \mathbb{P}$. Let us denote by $J_i$ the time of the 
$i^{th}$ jump of $(Y_t)_{t \in [0, T)}$. Then we have the following result. 
\begin{Lemma} 
\label{lem_po_mart}
Let $ \theta : [0, T) \to [0, \infty)$ be a locally-integrable function. 
That is, \newline $ \int_0^t \theta(s) \mathrm{d}s < \infty \ \forall t \in [0, T) $. Then 
the following process is a $\mathbb{P}$-martingale:
\begin{equation*}
M_t := e^{\int_0^t \log \theta(s) \mathrm{d}Y_s + \lambda \int_0^t (1 -
\theta(s)) \mathrm{d}s} = \Big( \prod_{i : J_i \leq t} \theta(J_i) \Big) e^{\lambda \int_0^t (1 -
\theta(s)) \mathrm{d}s} \ \text{, } t \in [0, T) \text{,}
\end{equation*}
where for any function $f$, $\int_0^t f(s) \mathrm{d}Y_s := \sum_{i : J_i \leq t} f(J_i)$. 
\end{Lemma}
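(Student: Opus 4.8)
The plan is to verify the martingale property directly by computing the conditional expectation $\mathbb{E}[M_{t+h} \mid \mathcal{F}_t]$ for $t, t+h \in [0,T)$ and showing it equals $M_t$, exploiting the independent-increments structure of the Poisson process. Write $M_{t+h} = M_t \cdot \Xi_{t,h}$, where
\[
\Xi_{t,h} = \exp\Big( \int_t^{t+h} \log\theta(s)\,\mathrm{d}Y_s + \lambda \int_t^{t+h}(1-\theta(s))\,\mathrm{d}s \Big)
= \Big( \prod_{i : t < J_i \leq t+h} \theta(J_i) \Big) e^{\lambda \int_t^{t+h}(1-\theta(s))\,\mathrm{d}s}.
\]
Since $M_t$ is $\mathcal{F}_t$-measurable and $\Xi_{t,h}$ depends only on the increments of $Y$ on $(t, t+h]$, which are independent of $\mathcal{F}_t$, it suffices to show $\mathbb{E}[\Xi_{t,h}] = 1$. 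The deterministic factor $e^{\lambda \int_t^{t+h}(1-\theta(s))\,\mathrm{d}s}$ pulls out, so the real content is the identity
\[
\mathbb{E}\Big[ \prod_{i : t < J_i \leq t+h} \theta(J_i) \Big] = e^{\lambda \int_t^{t+h}(\theta(s)-1)\,\mathrm{d}s}.
\]

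To establish this, I would condition on the number $K$ of jumps of $Y$ in $(t,t+h]$, which is Poisson with mean $\lambda h$. Given $K = k$, the jump times are distributed as $k$ i.i.d.\ uniform points on $(t, t+h]$ (the order being irrelevant since the product is symmetric). Hence
\[
\mathbb{E}\Big[ \prod_{i : t < J_i \leq t+h} \theta(J_i) \Big]
= \sum_{k=0}^{\infty} e^{-\lambda h}\frac{(\lambda h)^k}{k!} \left( \frac{1}{h}\int_t^{t+h} \theta(s)\,\mathrm{d}s \right)^{k}
= e^{-\lambda h} \sum_{k=0}^{\infty} \frac{1}{k!}\Big( \lambda \int_t^{t+h}\theta(s)\,\mathrm{d}s \Big)^k
= e^{-\lambda h}\, e^{\lambda \int_t^{t+h}\theta(s)\,\mathrm{d}s},
\]
which rearranges to exactly the claimed identity. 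Multiplying back in the deterministic factor gives $\mathbb{E}[\Xi_{t,h}] = 1$, hence $\mathbb{E}[M_{t+h}\mid\mathcal{F}_t] = M_t$. Local integrability of $\theta$ guarantees $\int_t^{t+h}\theta(s)\,\mathrm{d}s < \infty$, so all the exponentials and the interchange of sum and expectation (all terms nonnegative, by Tonelli) are justified, and in particular $\mathbb{E}|M_t| < \infty$ for every $t$.

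The only mild subtlety — and the step I would be most careful about — is the rigorous justification that, conditionally on $K=k$ jumps in $(t,t+h]$, the jump times are i.i.d.\ uniform, and the attendant measurability/integrability bookkeeping when $\theta$ can vanish or be large (the $\log\theta$ form is then only a notational convenience, whereas the product form is always well-defined). Since $\theta \geq 0$ and is locally integrable, $\int_t^{t+h}\theta(s)\,\mathrm{d}s \in [0,\infty)$, every term in the series is finite and nonnegative, and the computation goes through verbatim; if one prefers to avoid conditioning on $K$, the same identity follows from Campbell's formula for Poisson random measures, $\mathbb{E}\exp\big(\int g\,\mathrm{d}Y\big) = \exp\big(\lambda\int(e^{g}-1)\big)$ with $g = \log\theta$ on $(t,t+h]$, which is arguably the cleanest route. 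Either way no genuine obstacle arises; this is a standard exponential-martingale identity for Poisson processes, recorded here for later use.
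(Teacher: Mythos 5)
Your proof is correct and follows essentially the same route as the paper: the paper's key intermediate identity \eqref{eq_312} is exactly the assertion that, conditional on $Y_t=k$, the jump times are i.i.d.\ uniform so that $\mathbb{E}\big(e^{\int_0^t\log\theta\,\mathrm{d}Y_s}\mathbf{1}_{\{Y_t=k\}}\big)=e^{-\lambda t}\frac{\lambda^k}{k!}\big(\int_0^t\theta\big)^k$, and summing over $k$ combined with independent increments gives the martingale property, which is precisely your conditioning-on-$K$ computation applied to the increment $\Xi_{t,h}$. The only cosmetic difference is that you spell out the multiplicative decomposition $M_{t+h}=M_t\,\Xi_{t,h}$ and mention Campbell's formula as an alternative, whereas the paper records the identity and leaves the remaining bookkeeping implicit.
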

The next result tells what effect the martingale $(M_t)_{t \in [0, T) }$ has on the 
process $(Y_t)_{t \in [0, T)}$ when used as a Radon-Nikodym derivative.
\begin{Lemma}
\label{lem_po_meas}
Let $(\hat{\mathcal{F}}_t)_{t \in [0, T)}$ be the natural filtration of $(Y_t)_{t \in [0, T)}$. 
Define the new measure $ \mathbb{Q}$ via 
\begin{equation*}
\frac{\mathrm{d}\mathbb{Q}}{\mathrm{d}\mathbb{P}}
\bigg\vert_{\hat{\mathcal{F}}_t} = M_t \quad \text{, } t \in [0, T) \text{.}
\end{equation*}
Then under the new measure $ \mathbb{Q}$
\[
(Y_t)_{t \in [0, T)} \stackrel{d}{=} IPP \big( \lambda \theta(t) \big) \text{,}
\]
where $IPP \big( \lambda \theta(t) \big)$ stands for time-inhomogeneous 
Poisson process of instantaneous jump rate $ \lambda \theta(t)$.
\end{Lemma}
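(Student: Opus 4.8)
The plan is to identify the law of $(Y_t)_{t\in[0,T)}$ under $\mathbb{Q}$ by computing its exponential (Laplace) functionals, applying Lemma \ref{lem_po_mart} a second time. I would fix an arbitrary bounded measurable $g:[0,T)\to[0,\infty)$, set $\psi:=e^{-g}$ (so that $\psi$ is bounded above and bounded away from $0$, and $\theta\psi$ is again locally integrable), and introduce the candidate process
\[
\widetilde M_t \;:=\; \exp\Big\{ \int_0^t \log\psi(s)\,\mathrm{d}Y_s + \lambda\int_0^t \theta(s)\big(1-\psi(s)\big)\,\mathrm{d}s \Big\},\qquad t\in[0,T),
\]
which is precisely the exponential functional one expects to be a martingale if $(Y_t)$ were an $IPP(\lambda\theta(t))$. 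The goal is to show $\widetilde M$ is a $\mathbb{Q}$-martingale with $\widetilde M_0=1$.

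For this I would use the abstract Bayes rule: since $(M_t)$ is a strictly positive $\mathbb{P}$-martingale, $(\widetilde M_t)$ is a $\mathbb{Q}$-martingale for $(\hat{\mathcal{F}}_t)$ if and only if $(\widetilde M_t M_t)$ is a $\mathbb{P}$-martingale. Multiplying the two exponentials, the $\mathrm{d}Y_s$-integrands add to $\log\theta(s)+\log\psi(s)=\log(\theta(s)\psi(s))$ and the $\mathrm{d}s$-integrands add to $\lambda[(1-\theta(s))+\theta(s)(1-\psi(s))]=\lambda(1-\theta(s)\psi(s))$, so
\[
\widetilde M_t M_t \;=\; \exp\Big\{ \int_0^t \log\big(\theta(s)\psi(s)\big)\,\mathrm{d}Y_s + \lambda\int_0^t \big(1-\theta(s)\psi(s)\big)\,\mathrm{d}s \Big\},
\]
which is exactly the process produced by Lemma \ref{lem_po_mart} with the locally-integrable rate $\theta\psi$ in place of $\theta$; hence it is a $\mathbb{P}$-martingale, and $\widetilde M$ is a genuine $\mathbb{Q}$-martingale.

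Reading off $\mathbb{E}^{\mathbb{Q}}[\widetilde M_t]=\widetilde M_0=1$ (with $\mathbb{E}^{\mathbb{Q}}$ denoting expectation under $\mathbb{Q}$) then gives, for every $t\in(0,T)$ and every bounded measurable $g\ge 0$,
\[
\mathbb{E}^{\mathbb{Q}}\Big[\exp\Big(-\int_0^t g(s)\,\mathrm{d}Y_s\Big)\Big] \;=\; \exp\Big\{-\lambda\int_0^t \theta(s)\big(1-e^{-g(s)}\big)\,\mathrm{d}s\Big\},
\]
which is the Laplace functional of an inhomogeneous Poisson process of intensity $\lambda\theta(\cdot)$ on $[0,t]$. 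Specializing $g$ to piecewise-constant functions, the right-hand side factorizes over disjoint subintervals, which yields independence of the increments of $(Y_t)$ over disjoint intervals and $Y_v-Y_u\sim\mathrm{Poisson}\!\big(\lambda\int_u^v\theta(s)\,\mathrm{d}s\big)$; since $t<T$ was arbitrary this pins down every finite-dimensional distribution, proving $(Y_t)_{t\in[0,T)}\stackrel{d}{=}IPP(\lambda\theta(t))$ under $\mathbb{Q}$.

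I expect the only delicate points to be bookkeeping rather than conceptual: checking that $\theta\psi$ really meets the hypotheses of Lemma \ref{lem_po_mart} (which is the reason for restricting to bounded $g\ge 0$, so $\psi$ stays in a compact subset of $(0,\infty)$ and $\log\psi$ causes no trouble), and --- when $T=\infty$ --- keeping in mind that $\mathbb{Q}$ is only specified consistently on each $\hat{\mathcal{F}}_t$ separately, so the conclusion is a statement about finite-dimensional laws and no uniform integrability of $(M_t)$ over the whole of $[0,T)$ is needed. As an alternative to the Laplace-functional route one could instead compute directly, on the event $\{Y_t=n\}$, the $\mathbb{Q}$-joint density of the first $n$ jump times and compare it with that of an $IPP(\lambda\theta(t))$, but invoking Lemma \ref{lem_po_mart} twice is cleaner.
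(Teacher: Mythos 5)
Your proof is correct, but it takes a genuinely different route from the paper's (admittedly sketchy) argument. The paper proves both Lemma~\ref{lem_po_mart} and Lemma~\ref{lem_po_meas} from the single explicit identity
\[
\mathbb{E}\Big(e^{\int_0^t\log\theta(s)\,\mathrm{d}Y_s}\,\mathbf{1}_{\{Y_t=k\}}\Big)
= e^{-\lambda t}\frac{\lambda^k}{k!}\Big(\int_0^t\theta(s)\,\mathrm{d}s\Big)^k,
\]
from which $\mathbb{Q}(Y_t=k)=e^{-\int_0^t\lambda\theta(s)\,\mathrm{d}s}\frac{(\int_0^t\lambda\theta(s)\,\mathrm{d}s)^k}{k!}$ drops out directly, and then asserts that one ``can check the finite-dimensional distributions'' by the same kind of calculation over subintervals. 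You instead avoid any direct distributional computation: you apply the abstract Bayes rule to convert the problem back to a $\mathbb{P}$-martingale question, feed the product $\widetilde M M$ through Lemma~\ref{lem_po_mart} a second time with rate $\theta\psi$, and read off the Laplace functional
\[
\mathbb{E}^{\mathbb{Q}}\Big[e^{-\int_0^t g(s)\,\mathrm{d}Y_s}\Big]
= \exp\Big\{-\lambda\int_0^t\theta(s)\big(1-e^{-g(s)}\big)\,\mathrm{d}s\Big\},
\]
whose factorization over disjoint intervals (for step functions $g$) delivers both independence of increments and their Poisson laws simultaneously. The trade-off: the paper's route is more elementary and produces the one-dimensional law in closed form immediately, but it leaves the multi-interval independence implicit; your route reuses the already-proved martingale lemma and packages the entire finite-dimensional structure in one functional identity, at the cost of invoking the Bayes formula for conditional expectations under an absolutely continuous change of measure. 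Your bookkeeping points (local integrability of $\theta\psi$ since $0<\psi\le 1$, and that $\mathbb{Q}$ is only pinned down on $\cup_t\hat{\mathcal{F}}_t$ so the conclusion is about finite-dimensional laws) are exactly the right ones to flag.
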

\begin{proof}[Outline of the proof of Lemmas \ref{lem_po_mart} and  \ref{lem_po_meas}]
As an intermediate step one can check by standard calculations that the following identity holds:
\begin{equation}
\label{eq_312}
\mathbb{E} \Big(e^{\int_0^t \log \theta(s) \mathrm{d}Y_s} \mathbf{1}_{\{Y_t = k\}}
\Big) = e^{-\lambda t} \frac{\lambda^k}{k!} \Big(\int_0^t \theta(s)
\mathrm{d}s \Big)^k \quad \forall k \in \mathbb{N} \text{,}
\end{equation}
where $ \mathbb{E}$ is the expectation associated with $\mathbb{P}$.

The martingale property of $(M_t)_{t \in [0, T)}$ then follows immediately. 

To verify that under $ \mathbb{Q} $, $ (Y_t)_{t \in [0, T)}$ is a time-inhomogeneous Poisson process one 
can check the finite-dimensional distributions.
\end{proof}
For the next few results suppose that $(Y_t)_{t \in [0, T)} \stackrel{d}{=} IPP(r(t))$, where  $r : [0, T) \to [0, \infty)$ is a 
locally-integrable function. That is, $ (Y_t)_{t \in [0, T)}$ is a time-inhomogeneous Poisson process with 
instantaneous jump rate $r(t)$. 

The following identity is a standard integration by-parts-formula which is trivial to prove.
\begin{Proposition}[Integration by parts for time-inhomogeneous Poisson processes]
\label{poisson_b}
Let $f \in C^1 \big( [0, T) \big) $. Then almost surely 
\[
\int_0^t f(s) \mathrm{d} Y_s = f(t) Y_t - \int_0^t f'(s) Y_s \mathrm{d}s \text{,} 
\]
\end{Proposition}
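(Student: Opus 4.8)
The plan is to prove the identity pathwise, for a fixed realisation of $(Y_s)_{s\in[0,t]}$ outside a $\mathbb{P}$-null set. Since $(Y_s)_{s\in[0,T)}$ is a time-inhomogeneous Poisson process with locally-integrable rate, $\mathbb{E}\,Y_t = \int_0^t r(s)\,\mathrm{d}s < \infty$, so almost surely there are only finitely many jump times $0 < J_1 < \cdots < J_k \le t$ in $[0,t]$, with $k = Y_t$ (the case $k=0$ being trivial, as both sides then vanish). On this event, $\int_0^t f(s)\,\mathrm{d}Y_s = \sum_{i=1}^k f(J_i)$ by definition, and the path $s \mapsto Y_s$ is the step function equal to $i$ on $[J_i, J_{i+1})$ for $0 \le i \le k$, where we set $J_0 := 0$ and $J_{k+1} := t$.

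Next I would evaluate the Lebesgue integral on the right-hand side by splitting $[0,t]$ along the jumps and applying the fundamental theorem of calculus on each subinterval, which is legitimate since $f \in C^1([0,T))$:
\[
\int_0^t f'(s)\,Y_s\,\mathrm{d}s = \sum_{i=0}^{k} i \int_{J_i}^{J_{i+1}} f'(s)\,\mathrm{d}s = \sum_{i=1}^{k} i\bigl(f(J_{i+1}) - f(J_i)\bigr).
\]
It then remains to check that $f(t)Y_t - \sum_{i=1}^k i\bigl(f(J_{i+1}) - f(J_i)\bigr)$ simplifies to $\sum_{i=1}^k f(J_i)$. Since $Y_t = k$ and $J_{k+1} = t$, this is a summation-by-parts (Abel) rearrangement: writing $a_i := f(J_i)$, one has $\sum_{i=1}^k i(a_{i+1} - a_i) = k\,a_{k+1} - \sum_{i=1}^k a_i$, and subtracting this from $k\,a_{k+1} = f(t)Y_t$ leaves exactly $\sum_{i=1}^k a_i = \int_0^t f(s)\,\mathrm{d}Y_s$, which is the claim.

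There is essentially no serious obstacle: the statement is a deterministic Riemann--Stieltjes integration-by-parts formula applied pathwise, and one could alternatively just quote the general fact that $\int_0^t B\,\mathrm{d}A = B(t)A(t) - B(0)A(0) - \int_0^t A\,\mathrm{d}B$ for $A$ of bounded variation and $B$ continuous, taken here with $A = Y$, $B = f$, $A_0 = Y_0 = 0$, $\mathrm{d}B = f'(s)\,\mathrm{d}s$. The only points needing a word of care are the almost-sure finiteness of the number of jumps on $[0,t]$ (which uses local integrability of the rate), the degenerate case $Y_t = 0$, and keeping the index bookkeeping in the Abel summation consistent with the conventions $J_0 = 0$ and $J_{Y_t+1} = t$.
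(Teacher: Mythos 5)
Your proof is correct and complete. The paper itself offers no proof of Proposition~\ref{poisson_b}, dismissing it as ``a standard integration-by-parts formula which is trivial to prove,'' so there is no argument of the authors' to compare against; you are simply spelling out what they leave implicit. Your pathwise approach --- fix a realisation with finitely many jumps $J_1<\dots<J_k$ in $[0,t]$ (a.s.\ finite since the rate is locally integrable), write $Y_s$ as a step function, compute $\int_0^t f'(s)Y_s\,\mathrm{d}s$ piecewise by the fundamental theorem of calculus, and rearrange by Abel summation --- is exactly the standard deterministic Riemann--Stieltjes integration-by-parts argument, and your bookkeeping with the conventions $J_0=0$, $J_{k+1}=t$ is consistent. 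Your closing remark that one could alternatively invoke the general Stieltjes formula $\int_0^t B\,\mathrm{d}A = B(t)A(t)-B(0)A(0)-\int_0^t A\,\mathrm{d}B$ for $A$ of bounded variation and $B\in C^1$ (here $Y_0=0$, so the $B(0)A(0)$ term vanishes, and continuity of $f$ makes the left-limit/right-limit distinction in $\int A\,\mathrm{d}B$ immaterial) is also valid and is presumably what the authors had in mind when calling the result trivial.
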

Since $(Y_t)_{t \in [0, T)} \stackrel{d}{=} (Z_{R(t)})_{t \in [0, T)}$, where $R(t) := \int_0^t r(s) ds$ 
and $(Z_t)_{t \geq 0} \stackrel{d}{=} PP(1)$ we also have the following useful result.
\begin{Proposition}[SLLN for time-inhomogeneous Poisson processes]
\label{poisson_c} $ $ \newline
If $\lim_{t \to T} \int_0^t r(s) \mathrm{d}s = \infty \ $ then 
\[
\dfrac{Y_t}{\int_0^t r(s) \mathrm{d}s} \to 1 \text{ a.s.  as } t \to T \text{.}
\]
\end{Proposition}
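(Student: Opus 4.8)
The plan is to reduce everything to the classical strong law of large numbers for a standard (rate-one, time-homogeneous) Poisson process, via the time-change representation already noted just before the statement, $\left(Y_t\right)_{t\in[0,T)} \stackrel{d}{=} \left(Z_{R(t)}\right)_{t\in[0,T)}$ with $R(t)=\int_0^t r(s)\,\mathrm{d}s$ and $(Z_t)_{t\ge 0}\stackrel{d}{=} PP(1)$. Since the event $\{\,Y_t/R(t)\to 1 \text{ as } t\to T\,\}$ is a measurable functional of the path $(Y_t)_{t\in[0,T)}$ alone and $R$ is deterministic, it suffices to prove the corresponding statement for the process $t\mapsto Z_{R(t)}$; the equality in law then transfers the almost-sure conclusion back to $Y$.

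First I would establish the SLLN for $(Z_t)_{t\ge 0}$, namely $Z_t/t\to 1$ a.s. as $t\to\infty$. Writing $0<\tau_1<\tau_2<\dots$ for the successive jump times of $Z$, the inter-arrival times $\tau_1,\tau_2-\tau_1,\tau_3-\tau_2,\dots$ are i.i.d.\ $\mathrm{Exp}(1)$, so by Kolmogorov's SLLN $\tau_n/n\to 1$ a.s. Sandwiching $t$ between consecutive arrival times via $\tau_{Z_t}\le t<\tau_{Z_t+1}$ on $\{Z_t\ge 1\}$, and using $Z_t\to\infty$ a.s., division yields $Z_t/t\to 1$ a.s. Call $\Omega_0$ the almost-sure event on which this convergence holds.

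Next I would compose with the time change. Since $r\ge 0$ is locally integrable, $R$ is finite on $[0,T)$ and nondecreasing, and by hypothesis $R(t)\to\infty$ as $t\to T$. Hence for every $\omega\in\Omega_0$ and every $\eps>0$ there is $s_0$ with $|Z_s(\omega)/s-1|<\eps$ for all $s\ge s_0$, and then $|Z_{R(t)}(\omega)/R(t)-1|<\eps$ for all $t$ close enough to $T$ that $R(t)\ge s_0$. Thus $Z_{R(t)}/R(t)\to 1$ a.s.\ as $t\to T$, and transferring through the equality in law gives $Y_t/R(t)\to 1$ a.s.\ as $t\to T$, as claimed.

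There is essentially no serious obstacle here --- the proposition is genuinely routine, consistent with the authors' remark that it is ``useful'' rather than difficult --- and the only points requiring a little care are (i) checking that the relevant convergence is a path functional of $Y$ alone so that equality in distribution suffices to transfer the almost-sure statement, and (ii) noting that $R(t)\to\infty$ genuinely forces the argument of $Z$ to run off to infinity as $t\to T$, which is immediate from the monotonicity of $R$. If one preferred to avoid the time-change representation, one could instead argue directly by Borel--Cantelli along the deterministic sequence $t_n$ defined through $R(t_n)=n$, filling in between using the monotonicity of $t\mapsto Y_t$, but the time-change route is cleaner.
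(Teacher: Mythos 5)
Your proof is correct and takes exactly the route the paper intends: the paper states the time-change representation $(Y_t)_{t\in[0,T)} \stackrel{d}{=} (Z_{R(t)})_{t\in[0,T)}$ in the sentence immediately preceding the proposition and then simply asserts the result, leaving the reduction to the SLLN for the rate-one Poisson process implicit. You have filled in precisely those routine details (SLLN via i.i.d.\ exponential inter-arrival times and a sandwich argument, monotonicity of $R$, and the measurability/transfer-of-law point), so there is nothing to add or correct.
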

The next result combines Propositions \ref{poisson_b} and \ref{poisson_c}.
\begin{Proposition}
\label{poisson_d}
Let $f:[0, T) \to [0, \infty) $ be differentiable such that $f'(t) \geq 0$ for $t$ large enough.
Suppose $r$ and $f$ satisfy the following two conditions:
\begin{enumerate}
\item $\int_0^t r(s) \mathrm{d}s \to \infty$ as $t \to T$
\item $\limsup_{t \to T} \frac{f(t) \int_0^t r(s) \mathrm{d}s}{\int_0^t f(s)r(s) 
\mathrm{d}s} < \infty$
\end{enumerate}
Then 
\[
\frac{\int_0^t f(s) \mathrm{d} Y_s}{\int_0^t f(s)r(s) \mathrm{d}s}
\to 1 \text{ a.s. as } t \to T \text{.} 
\]
\end{Proposition}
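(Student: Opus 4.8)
The plan is to combine the integration-by-parts formula of Proposition~\ref{poisson_b} with the strong law of Proposition~\ref{poisson_c}. Write $R(t):=\int_0^t r(s)\,\mathrm{d}s$ and $F(t):=\int_0^t f(s)r(s)\,\mathrm{d}s$, so that condition~(i) reads $R(t)\to\infty$ and condition~(ii) reads $L:=\limsup_{t\to T} f(t)R(t)/F(t)<\infty$; note that since $f$ is eventually nonnegative and nondecreasing we also get $F(t)\to\infty$ (away from the degenerate case where $f$ vanishes eventually, which the statement should be read as excluding and which does not arise in the applications). First I would record the purely deterministic identity $\int_0^t f'(s)R(s)\,\mathrm{d}s = f(t)R(t)-F(t)$, obtained by ordinary integration by parts using $R(0)=0$ and $R'=r$.

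Next, Proposition~\ref{poisson_b} gives $\int_0^t f(s)\,\mathrm{d}Y_s = f(t)Y_t - \int_0^t f'(s)Y_s\,\mathrm{d}s$, which expresses the Poisson integral through the values $Y_s$, $s\le t$, while Proposition~\ref{poisson_c} (applicable by (i)) tells us $Y_s/R(s)\to 1$ a.s. So I would fix $\eps\in(0,1)$, pick the random time $t_0<T$ beyond which simultaneously $f'\ge 0$ and $(1-\eps)R(s)\le Y_s\le(1+\eps)R(s)$, and split both integrals at $t_0$. The contributions from $[0,t_0]$ are finite random quantities not depending on $t$, hence negligible after division by $F(t)\to\infty$. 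On $[t_0,t]$ the sign condition $f'\ge 0$ lets me replace $Y_s$ by $(1\pm\eps)R(s)$ inside $\int_{t_0}^t f'(s)Y_s\,\mathrm{d}s$, and likewise bound $f(t)Y_t$ by $(1\pm\eps)f(t)R(t)$; feeding in the deterministic identity then sandwiches $\int_0^t f(s)\,\mathrm{d}Y_s$ between $(1+\eps)F(t)-2\eps f(t)R(t)+O(1)$ and $(1-\eps)F(t)+2\eps f(t)R(t)+O(1)$, where the $O(1)$'s are random but $t$-independent.

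Finally, dividing by $F(t)$, letting $t\to T$ and using condition~(ii) yields $1+\eps-2\eps L\le\liminf\le\limsup\le 1-\eps+2\eps L$ for the ratio; letting $\eps\downarrow 0$ closes the argument. I do not expect a genuine obstacle — the result is elementary — but the one point that needs care is the bookkeeping forced by the hypothesis that $f'\ge 0$ holds only \emph{eventually}: this is exactly what necessitates splitting at the random time $t_0$ and produces the additive constants, whose harmlessness rests precisely on having first verified $F(t)\to\infty$.
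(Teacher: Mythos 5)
Your proposal is correct and follows exactly the same route the paper sketches: apply Proposition~\ref{poisson_b} to rewrite $\int_0^t f\,\mathrm{d}Y_s$ as $f(t)Y_t-\int_0^t f'(s)Y_s\,\mathrm{d}s$, invoke the SLLN of Proposition~\ref{poisson_c} to replace $Y_s$ by $R(s)$ up to a factor $1\pm\eps$ past a random time, and then compare with the deterministic integration-by-parts identity $\int_0^t f'(s)R(s)\,\mathrm{d}s=f(t)R(t)-F(t)$, using condition~(ii) to control the $f(t)R(t)/F(t)$ error term. You have simply filled in the bookkeeping (the split at $t_0$, the $O(1)$ tails, and the $\eps\downarrow 0$ sandwich) that the paper's two-line proof leaves implicit.
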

Note that the second condition is generally rather restrictive, but it will be satisfied
by the functions that we consider in this article.
\begin{proof}
Observe that by Proposition \ref{poisson_b} we have 
\[
\frac{\int_0^t f(s) \mathrm{d} Y_s}{\int_0^t f(s)r(s) \mathrm{d}s} = \frac{f(t) 
Y_t - \int_0^t f'(s) Y_s \mathrm{d}s}{\int_0^t f(s)r(s) \mathrm{d}s} \text{.}
\]
Then apply Proposition \ref{poisson_c} and use the deterministic integration-by-parts formula.
\end{proof}
Let us now consider a continuous-time random walk  $(X_t)_{t \in [0, T)}$ defined under some probability 
measure $ \mathbb{P}$ as it was described in the introduction. It can be written as a difference of two 
independent Poisson processes of rate $\lambda$:
\[
X_t = X^+_t - X^-_t \ \text{, } t \in [0, T) \text{,}
\]
where $(X^+_t)_{t \to [0, T)}$ is the process of positive jumps and 
 $(X^-_t)_{t \in [0, T)} \stackrel{d}{=} PP( \lambda)$ is the process of negative jumps.
From Lemmas \ref{lem_po_mart} and \ref{lem_po_meas} we get the following result.
\begin{Proposition} 
\label{prop_ch_meas}
Let $\theta^+$, $\theta^- : [0, T) \to [0, \infty)$ be two locally-integrable functions. 
Then the following process is a $\mathbb{P}$-martingale:
\begin{equation}
\label{eq_ch_meas}
M_t := e^{\int_0^t \log \theta^+(s) \mathrm{d}X^+_s + \lambda \int_0^t (1 -
\theta^+(s)) \mathrm{d}s \ + \ \int_0^t \log \theta^-(s) \mathrm{d}X^-_s + \lambda \int_0^t (1 -
\theta^-(s)) \mathrm{d}s} \ \text{, } t \in [0, T) \text{.}
\end{equation}
Moreover, if we define the new measure $ \mathbb{Q}$ as
\begin{equation*}
\frac{\mathrm{d}\mathbb{Q}}{\mathrm{d}\mathbb{P}}
\bigg\vert_{\hat{\mathcal{F}}_t} = M_t \quad \text{, } t \in [0, T) \text{,}
\end{equation*}
where $(\hat{\mathcal{F}}_t)_{t \in [0, T)}$ is the natural filtration of $(X_t)_{t \in [0, T)}$, 
then under $ \mathbb{Q}$
\[
(X^+_t)_{t \in [0, T)} \stackrel{d}{=} IPP \big( \lambda \theta^+(t) \big) \text{, } \ 
(X^-_t)_{t \in [0, T)} \stackrel{d}{=} IPP \big( \lambda \theta^-(t) \big) \text{.}
\]
\end{Proposition}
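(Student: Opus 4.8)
The plan is to reduce everything to the single-Poisson-process results of Lemmas~\ref{lem_po_mart} and~\ref{lem_po_meas} by exploiting the factorization $M_t = M^+_t\,M^-_t$, where
\[
M^\pm_t := e^{\int_0^t \log\theta^\pm(s)\,\mathrm{d}X^\pm_s + \lambda\int_0^t(1-\theta^\pm(s))\,\mathrm{d}s}
\]
is precisely the martingale produced by Lemma~\ref{lem_po_mart} applied to the Poisson process $(X^\pm_t)_{t\in[0,T)}$. Write $(\mathcal{F}^\pm_t)_{t\in[0,T)}$ for the natural filtration of $(X^\pm_t)$. The first point I would record is that, since the up-jumps and down-jumps of $(X_t)$ have size $+1$ and $-1$ respectively and (being driven by two independent Poisson processes) almost surely never coincide, the path of $(X_t)$ determines the paths of $(X^+_t)$ and $(X^-_t)$, and conversely; hence $\hat{\mathcal{F}}_t = \mathcal{F}^+_t\vee\mathcal{F}^-_t$. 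Together with the $\mathbb{P}$-independence of $(X^+_t)$ and $(X^-_t)$, this is the structural fact driving the whole argument.

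For the martingale property, note that $M^\pm_t$ is a function of $(X^\pm_u : u\le t)$ only and is integrable with $\mathbb{E}M^\pm_t = 1$ by Lemma~\ref{lem_po_mart}; hence, for $0\le s\le t<T$, a routine $\pi$-system argument using the independence of $(X^+_t)$ and $(X^-_t)$ gives
\[
\mathbb{E}\big(M_t \mid \hat{\mathcal{F}}_s\big) = \mathbb{E}\big(M^+_t \mid \mathcal{F}^+_s\big)\,\mathbb{E}\big(M^-_t \mid \mathcal{F}^-_s\big) = M^+_s\,M^-_s = M_s,
\]
the middle equality being the martingale property of each factor. In particular $\mathbb{E}M_t = \mathbb{E}M^+_t\,\mathbb{E}M^-_t = 1$, so $(M_t)_{t\in[0,T)}$ is a genuine mean-one martingale and the change of measure is consistently defined on each $\hat{\mathcal{F}}_t$.

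For the effect of the change of measure, let $\mathbb{Q}^\pm$ be the measure obtained by using $M^\pm$ as Radon--Nikodym derivative, so that $(X^\pm_t)\stackrel{d}{=}IPP(\lambda\theta^\pm(t))$ under $\mathbb{Q}^\pm$ by Lemma~\ref{lem_po_meas}. I would then fix $t\in[0,T)$ and, for bounded $g^+$ depending on $(X^+_u : u\le t)$ and bounded $g^-$ depending on $(X^-_u : u\le t)$, compute
\[
\mathbb{E}_{\mathbb{Q}}\big(g^+ g^-\big) = \mathbb{E}\big(M^+_t M^-_t g^+ g^-\big) = \mathbb{E}\big(M^+_t g^+\big)\,\mathbb{E}\big(M^-_t g^-\big) = \mathbb{E}_{\mathbb{Q}^+}\big(g^+\big)\,\mathbb{E}_{\mathbb{Q}^-}\big(g^-\big),
\]
using the $\mathbb{P}$-independence for the second equality and the definition of $\mathbb{Q}^\pm$ for the third. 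Taking $g^-\equiv 1$ identifies the $\mathbb{Q}$-law of $(X^+_t)$ with $\mathbb{Q}^+ = IPP(\lambda\theta^+(t))$, and symmetrically for $(X^-_t)$; the displayed product identity then shows that $\sigma(X^+_u : u\le t)$ and $\sigma(X^-_u : u\le t)$ are $\mathbb{Q}$-independent, which completes the claim.

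I do not expect a genuine obstacle here: all the probabilistic content lies in the one-particle Lemmas~\ref{lem_po_mart}--\ref{lem_po_meas}, and the only points needing a little care are (i) the identification $\hat{\mathcal{F}}_t = \mathcal{F}^+_t\vee\mathcal{F}^-_t$, which legitimises splitting conditional expectations across the two independent coordinates, and (ii) reading $M_t$ through the explicit product representation of Lemma~\ref{lem_po_mart} at points where $\theta^\pm$ may vanish, so that no genuine ambiguity in $\log\theta^\pm$ arises.
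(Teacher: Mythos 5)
Your proposal is correct and takes essentially the same approach as the paper, which simply states that the proposition follows from Lemmas~\ref{lem_po_mart} and~\ref{lem_po_meas} without spelling out the details. You have filled in exactly the intended argument: factor $M_t = M^+_t M^-_t$, use the $\mathbb{P}$-independence of the two Poisson components together with the (a.s.) identification $\hat{\mathcal{F}}_t = \mathcal{F}^+_t \vee \mathcal{F}^-_t$, and apply the one-sided lemmas to each factor.
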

In other words the martingale $M$ used as the Radon-Nikodym derivative has the 
effect of scaling the upward jumps by the factor of $ \theta^+(t)$ and the 
rate of downward jumps by the factor $ \theta^-(t)$ at time $t$. 

Furthermore from Propositions \ref{poisson_c} and \ref{poisson_d} we know that 
$\mathbb{Q}$-a.s. 
\[
\lim_{t \to T} \frac{X_t^+}{\int_0^t \lambda \theta^+(s) \mathrm{d}s} = 1 \text{, } \ 
\lim_{t \to T} \frac{X_t^-}{\int_0^t \lambda \theta^-(s) \mathrm{d}s} = 1 \text{, } 
\]
\[
\lim_{t \to T} \frac{\int_0^t f(s) \mathrm{d}X_s^+}{\int_0^t \lambda \theta^+(s) f(s) \mathrm{d}s} = 1 \text{, } \ 
\lim_{t \to T} \frac{\int_0^t f(s) \mathrm{d}X_s^-}{\int_0^t \lambda \theta^-(s) f(s) \mathrm{d}s} = 1 
\]
provided that $ \theta^+$, $ \theta^-$ and $f$ satisfy the conditions of Propositions \ref{poisson_c} and \ref{poisson_d}.
\section{Spines and additive martingales}
In this section we give a brief overview of the main spine tools. The major 
reference for this section is the work of Hardy and Harris \cite{2} where all the proofs 
and further references can be found.

Firstly, let us take the time set of our model to be $[0, T)$ for some deterministic 
$T \in (0, \infty]$. We assume in this section that the branching process starts from $0$.

We let $(\mathcal{F}_t)_{t \in [0, T)}$ denote the natural filtration of our branching process as described in 
the introduction. We define $\mathcal{F}_T := \sigma(\cup_{t \in [0, T)} \mathcal{F}_t)$.

Let us now extend our branching random walk by identifying an infinite line of descent, which we refer to as the spine, 
in the following way. The initial particle of the branching process begins the spine. When it splits into two new particle, one of them 
is chosen with probability $\frac{1}{2}$ to continue the spine. This goes on in the obvious way: whenever the particle 
currently in the spine splits, one of its children is chosen uniformly at random to continue the spine.

The spine is denoted by $\xi = \{ \varnothing, \xi_1, \xi_2, \cdots\}$, where $\varnothing$ is the initial particle (both in the spine 
and in the entire branching process) and $\xi_n$ is the particle in the $(n+1)^{st}$ generation of the spine. Furthermore, at time $t \in [0, T)$ 
we define: 
\begin{itemize}
\item $node_t(\xi) := u \in N_t \cap \xi$ (such $u$ is unique). That is, $node_t(\xi)$ is the particle in the spine alive at time $t$.
\item $n_t := |node_t(\xi)|$. Thus $n_t$ is the number of fissions that have occured along the spine by time $t$. 
\item $\xi_t := X^u_t$ for $u \in N_t \cap \xi$. So $(\xi_t)_{t \in [0,T)}$ is the path of the spine.
\end{itemize}
The next important step is to define a number of filtrations of our sample 
space, which contain different information about the process.
\begin{Definition}[Filtrations]$ $
\begin{itemize}
\item $ \mathcal{F}_t$ was defined earlier. It is the filtration which 
knows everything about the particles' motion and their genealogy, but it 
knows nothing about the spine.
\item We also define 
$ \tilde{\mathcal{F}}_t := \sigma \big( \mathcal{F}_t, node_t(\xi) \big) $. 
Thus $ \tilde{\mathcal{F}}$ has all the information about the process 
together with all the information about the spine. This will be the largest 
filtration.
\item $ \mathcal{G}_t := \sigma \big( \xi_s : 0 \leq s \leq t \big)$. 
This filtration only has information about the path of the spine process, 
but it can't tell which particle $u \in N_t$ is the spine particle at time $t$.
\item $ \tilde{\mathcal{G}}_t := \sigma \big( \mathcal{G}_t , \ ( node_s(\xi) : 
0 \leq s \leq t) \big)$. This filtration knows everything about the spine 
including which particles make up the spine, but it 
doesn't know what is happening off the spine.
\end{itemize}
\end{Definition}
Note that $ \mathcal{G}_t \subset \tilde{\mathcal{G}}_t 
\subset \tilde{\mathcal{F}}_t$ and $\mathcal{F}_t \subset \tilde{\mathcal{F}}_t$. 
We shall be using these filtrations throughout the whole article for taking various conditional 
expectations. 

We let $\tilde{P}$ be the probability measure under which the branching random walk is defined together with the 
spine. Hence $P = \tilde{P} \vert_{\mathcal{F}_T}$. We shall write $\tilde{E}$ for 
the expectation with respect to $ \tilde{P}$.

 Under $\tilde{P}$ the entire branching process (with the spine) can be described in the following way.
\begin{itemize}
\item the initial particle (the spine) moves like a random walk.
\item At instantaneous rate $ \beta |\cdot|^p$ it splits into two new particles. 
\item One of these particles (chosen uniformly at random) continues the spine.
That is, it continues moving as a random walk and branching at rate 
$ \beta |\cdot|^p$. 
\item The other particle initiates a new independent $P$-branching processes 
from the position of the split
\end{itemize}
It is not hard to see that under $\tilde{P}$ the spine's path $(\xi_t)_{t \in [0, T)}$ is itself a 
continuous-time random walk. 

Also, conditional on the path of the spine, $(n_t)_{t \in [0, T)}$ is a 
time-inhomogeneous Poisson process (or a Cox process) with instantaneous jump rate $ \beta| \xi_t|^p$. 
That is, conditional on $ \mathcal{G}_t$, $k$ splits take place along the spine by time $t$
with probability 
\[
\tilde{P}(n_t = k | \mathcal{G}_t) = 
\frac{(\int_0^t \beta| \xi_s|^p \mathrm{d}s )^k}{k!} e^{- \int_0^t \beta|\xi_s|^p \mathrm{d}s }
\text{.}
\]
The next result (see e.g. \cite{2}) has already been mentioned in the introduction.
\begin{Theorem}[Many-to-One Theorem]
\label{manytoone}
Let $f(t) \in m \mathcal{G}_t$. In other words, $f(t)$ is 
$\mathcal{G}_t$-measurable. Suppose it has the representation 
\[
f(t) = \sum_{u \in N_t} f_u(t) \mathbf{1}_{ \{ node_t( \xi) = u \} } \text{,} 
\]
where $f_u(t) \in m \mathcal{F}_t$, then
\[
E \Big( \sum_{u \in N_t} f_u(t) \Big) = \tilde{E} \Big( f(t) e^{\int_0^t \beta( \xi_s) \mathrm{d}s} \Big) \text{.}
\]
\end{Theorem}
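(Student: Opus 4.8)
The plan is to deduce the identity from two facts about the spine that are, in essence, already recorded in the construction of $\tilde P$ above. First, conditionally on the entire spine trajectory $\mathcal G_t$, the number of spine fissions $n_t$ is Poisson with mean $\int_0^t\beta(\xi_s)\,ds$, so, evaluating the Poisson probability generating function at $2$, $\tilde E\big(2^{n_t}\mid\mathcal G_t\big)=e^{\int_0^t\beta(\xi_s)\,ds}$. Second, conditionally on $\mathcal F_t$ (which sees the whole tree and all the motions but not the spine), each $u\in N_t$ satisfies $\tilde P\big(node_t(\xi)=u\mid\mathcal F_t\big)=2^{-|u|}$: for the spine to pass through $u$ the uniform child-selections must have come out ``correctly'' at each of the $|u|$ fissions on the ancestral line of $u$, all of which occur before time $t$, and these selections are independent fair coin-tosses independent of $\mathcal F_t$.

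Using the first fact and the tower property (recall $f(t)\in m\mathcal G_t$), I would first rewrite the Feynman--Kac weight as a factorial moment of $n_t$:
\[
\tilde E\Big(f(t)\,e^{\int_0^t\beta(\xi_s)\,ds}\Big)=\tilde E\Big(f(t)\,\tilde E\big(2^{n_t}\mid\mathcal G_t\big)\Big)=\tilde E\big(f(t)\,2^{n_t}\big).
\]
Then I would evaluate $\tilde E\big(f(t)\,2^{n_t}\big)$ by conditioning on $\mathcal F_t$. Since $n_t=|node_t(\xi)|$, on the event $\{node_t(\xi)=u\}$ we have $f(t)=f_u(t)$ and $2^{n_t}=2^{|u|}$, whence $f(t)\,2^{n_t}=\sum_{u\in N_t}f_u(t)\,2^{|u|}\,\mathbf 1_{\{node_t(\xi)=u\}}$; as $f_u(t)$, $2^{|u|}$ and $\{u\in N_t\}$ are $\mathcal F_t$-measurable, the second fact gives
\[
\tilde E\big(f(t)\,2^{n_t}\mid\mathcal F_t\big)=\sum_{u\in N_t}f_u(t)\,2^{|u|}\,\tilde P\big(node_t(\xi)=u\mid\mathcal F_t\big)=\sum_{u\in N_t}f_u(t).
\]
Taking $\tilde E$ of both sides and using $P=\tilde P|_{\mathcal F_T}$ yields exactly $\tilde E\big(f(t)e^{\int_0^t\beta(\xi_s)ds}\big)=E\big(\sum_{u\in N_t}f_u(t)\big)$.

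I do not expect a serious obstacle in the algebra; the only care needed is measure-theoretic. One must justify the second fact cleanly (it follows from the description of $\tilde P$: the child-selections form an i.i.d.\ fair sequence independent of the branching and motion data), and one must be mindful of integrability because the breeding rate $\beta|\cdot|^p$ is unbounded. For $p\le 1$, where Theorem~\ref{criticalrw} gives no explosion, $N_t$ is a.s.\ finite and $E|N_t|=\mathbb E^0 e^{\int_0^t\beta|X_s|^p\,ds}<\infty$, so all the sums and conditional expectations above are honestly finite; in general (including $p>1$, where both sides are $+\infty$) one reads the identity in $[0,\infty]$ for nonnegative $f(t)$ and invokes Tonelli and monotone convergence to justify interchanging the sum over $u\in N_t$, the series defining $\tilde E(2^{n_t}\mid\mathcal G_t)$, and the conditional expectations. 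This integrability caveat, rather than anything in the main computation, is the part I would expect to demand the most attention.
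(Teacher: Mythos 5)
Your proof is correct, and it is essentially the standard spine argument. One point worth noting for context: the paper itself does not supply a proof of this Many-to-One Theorem; it simply states it and cites Hardy \& Harris \cite{2}, so there is no ``paper's own proof'' to compare against. Your two-step derivation --- first replacing the Feynman--Kac weight by $2^{n_t}$ via the conditional Poisson generating function given $\mathcal G_t$, then integrating out the spine selection by conditioning on $\mathcal F_t$ and using $\tilde P(node_t(\xi)=u\mid\mathcal F_t)=2^{-|u|}$ --- is exactly the argument found in that reference, and the caveats you flag (justifying the $2^{-|u|}$ identity from the i.i.d.\ uniform child-selection, and reading the identity in $[0,\infty]$ with Tonelli when the breeding rate is unbounded) are the right things to be careful about.
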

Now let $ \theta = (\theta^+, \theta^-)$, where $\theta^+$, $\theta^- : [0, T) \to [0, \infty)$ 
are two locally-integrable functions. In view of Proposition \ref{prop_ch_meas} we define the following 
$ \tilde{P}$-martingale w.r.t filtration $(\tilde{\mathcal{G}}_t)_{t \in [0, T)}$:
\begin{align}
\label{M_tilde1_}
\tilde{M}_{\theta}(t) & := e^{- \beta \int_0^t \vert \xi_s \vert^p \mathrm{d}s}
2^{n_t} \times \exp \Big( \int_0^t \log \theta^+(s) \mathrm{d} \xi_s^+ +
\int_0^t \lambda (1 - \theta^+(s)) \mathrm{d}s \nonumber \\
& \qquad + \int_0^t \log \theta^-(s) \mathrm{d} \xi_s^- + \int_0^t \lambda (1 -
\theta^-(s)) \mathrm{d}s  \Big) \text{,}
\end{align}
where $(\xi^+_t)_{t \in [0, T)}$ is the process of positive jumps of the spine process and 
$(\xi^-_t)_{t \in [0, T)}$ is the process of its negative jumps.

Note that $\tilde{M}_{\theta}$ is the product of two $ \tilde{P}$-martingales, the first of which 
doubles the branching rate along the spine, and the second biases the rates of upward and downward 
jumps of the spine process. If we define the probability measure $ \tilde{Q}_{\theta}$ as
\begin{equation}
\label{Q_tilde1_}
\dfrac{\mathrm{d} \tilde{Q}_{\theta}}{\mathrm{d} \tilde{P}} 
\bigg\vert_{\tilde{\mathcal{F}}_t} = \tilde{M}_{\theta}(t) \ 
\text{, } \qquad t \in [0, T)
\end{equation}
then under $\tilde{Q}_{\theta}$ the branching process has the following description: 
\begin{Proposition}[Branching process under $\tilde{Q}_{\theta}$]
\label{BRW_under_Q} $ $
\begin{itemize}
\item The initial particle (the spine) moves like a biased random walk. That is, at time $t$ it jumps up at 
instantaneous rate $ \lambda \theta^+(t)$ and jumps down at instantaneous rate $ \lambda \theta^-(t)$. 
\item When it is at position $x$ it splits into two new particles at instantaneous rate $ 2 \beta |x|^p$.
\item One of these particles (chosen uniformly at random) continues the spine. I.e. it 
continues moving as a biased random walk and branching at rate $2 \beta | \cdot|^p$.
\item The other particle initiates an unbiased branching process (as under $P$) from the position of 
the split.
\end{itemize}
\end{Proposition}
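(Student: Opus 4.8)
The plan is to exploit the product structure of the martingale $\tilde{M}_\theta$ in \eqref{M_tilde1_} together with the spine decomposition of $\tilde P$ described above, following Hardy and Harris \cite{2}. Write $\tilde{M}_\theta(t) = A_t\,B_t$, where $A_t := e^{-\beta\int_0^t|\xi_s|^p\,\mathrm{d}s}\,2^{n_t}$ is the branching-rate factor and $B_t$ is the remaining exponential factor biasing the spine's jump processes. The key structural observation is that $\tilde{M}_\theta(t)$ is $\tilde{\mathcal{G}}_t$-measurable: it is a function only of the path of the spine, of the spine's up- and down-jump processes $\xi^\pm$, and of the number $n_t$ of fissions along the spine, and in no way of the motion or genealogy of the particles off the spine. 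Combined with the description of $\tilde P$ recalled above --- under which, conditionally on $\tilde{\mathcal{G}}_t$, the subtrees hanging off the spine are independent ordinary $P$-branching processes started at the fission positions --- this immediately yields the fourth bullet: the same conditional independence and the same subtree law persist under $\tilde{Q}_\theta$, because the Radon--Nikodym derivative carries no information about those subtrees. (It also shows that $\tilde M_\theta$ is a martingale with respect to the full filtration $(\tilde{\mathcal F}_t)$, not merely $(\tilde{\mathcal G}_t)$, using the conditional independence of $\tilde{\mathcal F}_s$ and $\tilde{\mathcal G}_t$ given $\tilde{\mathcal G}_s$.)

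It then remains to identify the law of the spine and of its fission process under $\tilde{Q}_\theta$. For the spine motion, $B_t$ is exactly the martingale of Proposition \ref{prop_ch_meas} applied to the pair $(\xi^+,\xi^-)$ of independent rate-$\lambda$ Poisson processes driving the spine (recall that under $\tilde P$ the spine is itself an ordinary continuous-time random walk). Hence, by that proposition, changing measure by $B_t$ turns $\xi^+$ into an $IPP(\lambda\theta^+(t))$ and $\xi^-$ into an $IPP(\lambda\theta^-(t))$, which is the first bullet. For the fission rate: conditionally on $\mathcal{G}_t$, the counting process $(n_t)$ is a time-inhomogeneous (Cox) Poisson process of instantaneous rate $\beta|\xi_s|^p$; after the deterministic time change $s\mapsto\int_0^s\beta|\xi_r|^p\,\mathrm{d}r$ it becomes a standard rate-$1$ Poisson process, and $A_t$ is precisely the martingale of Lemma \ref{lem_po_mart} with constant $\theta\equiv 2$ for that process, so it scales the rate by a factor $2$; translating back, the spine splits at rate $2\beta|\xi_t|^p$. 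This gives the second and third bullets.

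Finally I would assemble the pieces by disintegrating both $\tilde P$ and the candidate measure $\tilde{Q}_\theta$ (the one described by the four bullets) over $\tilde{\mathcal{G}}_t$: writing the law of the branching-process-with-spine as (law of the spine path) $\times$ (law of the fission times given the path) $\times$ (law of the off-spine subtrees given the spine), the subtree factors cancel in the ratio, the unbiased-versus-biased random-walk densities contribute $B_t$, and the doubled fission rate together with the $e^{-\beta\int_0^t|\xi_s|^p\,\mathrm{d}s}$ correction contributes $A_t$, so that the ratio equals $A_tB_t=\tilde{M}_\theta(t)$, as required. The main obstacle is the bookkeeping in this last disintegration --- setting up reference measures for the three components so that the ratio visibly collapses to $A_tB_t$ --- together with the minor care needed for the Cox-process time change when invoking Lemma \ref{lem_po_mart} for the fission factor; all of this is routine spine theory and is carried out in full in \cite{2}, so in the write-up I would state the decomposition and the two invocations (Proposition \ref{prop_ch_meas} for $B_t$, Lemma \ref{lem_po_mart} for $A_t$) and refer to \cite{2} for the remaining details.
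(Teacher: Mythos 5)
Your proposal is correct and follows the same route the paper implicitly takes: the text preceding Proposition~\ref{BRW_under_Q} observes precisely that $\tilde{M}_\theta$ factors into a martingale doubling the branching rate along the spine times a martingale biasing the spine's jump rates, and defers the formal bookkeeping to Hardy and Harris~\cite{2}. Your write-up simply spells out the two invocations (Proposition~\ref{prop_ch_meas} for the spine motion, Lemma~\ref{lem_po_mart} via a time change for the Cox fission process) and the $\tilde{\mathcal{G}}_t$-measurability of $\tilde{M}_\theta$, which together give exactly the four bullets.
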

Note that although \eqref{Q_tilde1_} only defines $ \tilde{Q}_{\theta}$ on events in 
$ \cup_{t \in [0, T)} \tilde{\mathcal{F}}_t$, Carath\'eodory's extension theorem tells that $ \tilde{Q}_{\theta}$ has a 
unique extension on $ \tilde{\mathcal{F}}_T := \sigma( \cup_{t \in [0, T)} \tilde{\mathcal{F}}_t)$ 
and thus \eqref{Q_tilde1_} implicitly defines $ \tilde{Q}_{\theta}$ on $ \tilde{\mathcal{F}}_T$.
\begin{Proposition}[Additive martingale]
\label{Q_theta}
We define the probability measure $Q_{\theta}$ := $\tilde{Q}_{\theta}|_{\mathcal{F}_{T}}$ so that 
\begin{equation}
\label{Q_1_}
\dfrac{\mathrm{d} Q_{\theta}}{\mathrm{d} P} \bigg\vert_{\mathcal{F}_t} = M_{\theta}(t) \ 
\text{, } \qquad t \in [0, T) \text{,}
\end{equation}
where $M_{\theta}(t)$ is the additive martingale
\begin{align}
\label{additivemart1_}
M_{\theta}(t) & =  \sum_{u \in N_t} \exp \Big( \int_0^t \log \theta^+(s)
\mathrm{d}X^+_u(s) + \int_0^t \log \theta^-(s) \mathrm{d}X^-_u(s) \nonumber \\
& \qquad + \int_0^t \lambda \big( 2 - \theta^+(s) - \theta^-(s) \big) \mathrm{d}s - \beta
\int_0^t \vert X_u(s) \vert^p \mathrm{d}s
\Big)
\end{align}
and $(X^+_u(s))_{0 \leq s \leq t}$ is the process of positive jumps of particle $u$, 
$(X^-_u(s))_{0 \leq s \leq t}$ is the process of its negative jumps. 
\end{Proposition}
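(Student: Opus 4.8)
The plan is to derive the martingale property and the change of measure by projecting the spine martingale $\tilde{M}_\theta$ from \eqref{M_tilde1_} down onto the sub-filtration $(\mathcal{F}_t)_{t \in [0,T)}$, exactly as in the general spine formalism of Hardy and Harris \cite{2}.

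First I would confirm that $(\tilde{M}_\theta(t))$ is a $\tilde{P}$-martingale with respect to the full filtration $(\tilde{\mathcal{F}}_t)$, not merely $(\tilde{\mathcal{G}}_t)$. As noted after \eqref{M_tilde1_}, $\tilde{M}_\theta$ factorises as the product of the jump-biasing term $\exp\big(\int_0^t \log\theta^+ \, \mathrm{d}\xi_s^+ + \lambda\int_0^t(1-\theta^+)\,\mathrm{d}s + \int_0^t \log\theta^- \, \mathrm{d}\xi_s^- + \lambda\int_0^t(1-\theta^-)\,\mathrm{d}s\big)$, which is a martingale for the spine's own jump processes by Proposition \ref{prop_ch_meas}, and the branching term $e^{-\beta\int_0^t|\xi_s|^p\,\mathrm{d}s}\,2^{n_t}$, which is a martingale because, conditional on $\mathcal{G}_t$, $n_t$ is a Cox process of rate $\beta|\xi_t|^p$, so that $\tilde{E}(2^{n_t}\mid\mathcal{G}_t) = e^{\beta\int_0^t|\xi_s|^p\,\mathrm{d}s}$. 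Enlarging the filtration from $(\tilde{\mathcal{G}}_t)$ to $(\tilde{\mathcal{F}}_t)$ does no harm, since under $\tilde{P}$ the sub-trees branching off the spine are independent copies of the $P$-branching process and contribute nothing to $\tilde{M}_\theta$; this is the standard situation treated in \cite{2}.

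The crux is then the projection onto $\mathcal{F}_t$. Because the spine picks a uniformly random child at every fission, conditionally on $\mathcal{F}_t$ the spine node at time $t$ equals a given $u \in N_t$ with probability $2^{-|u|}$, where $|u|$ is the number of fissions on the ancestral line of $u$ (and $\sum_{u\in N_t}2^{-|u|}=1$). On $\{node_t(\xi)=u\}$ we have $n_t = |u|$, $\xi_s^\pm = X_u^\pm(s)$ and $|\xi_s| = |X_u(s)|$ for all $s\le t$, so $\tilde{M}_\theta(t)$ becomes $2^{|u|}$ times the $u$-th term of the sum \eqref{additivemart1_}. Averaging over the spine choice, the factors $2^{-|u|}$ and $2^{|u|}$ cancel and we obtain $\tilde{E}(\tilde{M}_\theta(t)\mid\mathcal{F}_t) = M_\theta(t)$. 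From this, the $P$-martingale property of $(M_\theta(t))$ is immediate by the tower property: for $s<t$,
\[
E\big(M_\theta(t)\mid\mathcal{F}_s\big) = \tilde{E}\big(\tilde{M}_\theta(t)\mid\mathcal{F}_s\big) = \tilde{E}\big(\tilde{E}(\tilde{M}_\theta(t)\mid\tilde{\mathcal{F}}_s)\mid\mathcal{F}_s\big) = \tilde{E}\big(\tilde{M}_\theta(s)\mid\mathcal{F}_s\big) = M_\theta(s),
\]
using $\mathcal{F}_s\subset\tilde{\mathcal{F}}_s$ and the $(\tilde{\mathcal{F}}_t)$-martingale property. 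In particular $M_\theta(0)=1$, so $Q_\theta$ is a bona fide probability measure.

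Finally I would read off the Radon--Nikodym derivative: for $A\in\mathcal{F}_t$, since $\mathcal{F}_t\subset\tilde{\mathcal{F}}_t$ and by \eqref{Q_tilde1_},
\[
Q_\theta(A) = \tilde{Q}_\theta(A) = \tilde{E}\big(\tilde{M}_\theta(t)\,\mathbf{1}_A\big) = \tilde{E}\big(\tilde{E}(\tilde{M}_\theta(t)\mid\mathcal{F}_t)\,\mathbf{1}_A\big) = E\big(M_\theta(t)\,\mathbf{1}_A\big),
\]
which is \eqref{Q_1_}; the family $(M_\theta(t))$ is consistent in $t$ precisely because it is a martingale, and the well-definedness of $Q_\theta$ on $\mathcal{F}_T$ is Carath\'eodory's extension theorem, as in the remark preceding the statement. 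I expect the only delicate step to be the one in the second paragraph --- checking that $\tilde{M}_\theta$ is a martingale for the enlarged filtration $(\tilde{\mathcal{F}}_t)$ and that the conditional spine-selection probability is exactly $2^{-|u|}$ --- but both are part of the established spine toolbox and require no new argument here; everything else is bookkeeping.
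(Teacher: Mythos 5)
Your proof is correct and follows exactly the spine-projection argument that the paper (via its citation of Hardy and Harris \cite{2}) relies on; the paper states Proposition \ref{Q_theta} without any further proof. The key identity $\tilde{E}(\tilde{M}_\theta(t)\mid\mathcal{F}_t)=M_\theta(t)$, obtained by summing the spine martingale over $u\in N_t$ weighted by the conditional spine-selection probability $2^{-|u|}$, is precisely the standard step, and the rest (martingale property by the tower rule, Radon--Nikodym identity, Carath\'eodory extension) is bookkeeping, as you say.
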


Let us recall the following measure-theoretic result, which gives Lebesgue's decomposition 
of $Q_{\theta}$ into absolutely-continuous and singular parts. It can for example be found 
in the book of R. Durrett \cite{6} (Section 4.3).
\begin{Lemma}
\label{durrett_lem2_}
For events $A \in \mathcal{F}_T$
\begin{equation}
\label{measuredecomposition2_}
Q_{\theta} \big( A \big) = \int_A \limsup_{t \to T} M_{\theta}(t) \mathrm{d}P + 
Q_{\theta} \big( A \cap \{ \limsup_{t \to T} M_{\theta}(t) = \infty \} \big) \text{.}
\end{equation}
\end{Lemma}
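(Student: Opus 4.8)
The plan is to recognise this as the standard Lebesgue decomposition of $Q_\theta$ with respect to $P$ along the filtration $(\mathcal F_t)_{t\in[0,T)}$, so that the argument is essentially the one in \cite{6} adapted to our continuous-time setting. I would organise it around three ingredients: the a.s.\ convergence of $M_\theta$ under $P$, a conjugate martingale that controls $M_\theta$ under $Q_\theta$, and a truncation at the passage times of $M_\theta$.

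First I would note that, by Proposition \ref{Q_theta}, $(M_\theta(t))_{t\in[0,T)}$ is a nonnegative c\`adl\`ag $P$-martingale with $M_\theta(0)=1$; Doob's convergence theorem then gives $M_\theta(t)\to M_\infty$ $P$-a.s.\ as $t\to T$, where $M_\infty:=\limsup_{t\to T}M_\theta(t)<\infty$ $P$-a.s., and in particular the path $t\mapsto M_\theta(t)$ is $P$-a.s.\ bounded on $[0,T)$. Second, since the model has no killing we have $|N_t|\ge 1$, hence $M_\theta(t)>0$ identically, both $P$- and $Q_\theta$-a.s.; a one-line computation using \eqref{Q_1_} then shows $E_{Q_\theta}\bigl[1/M_\theta(t)\mid\mathcal F_s\bigr]=1/M_\theta(s)$, i.e.\ $\bigl(1/M_\theta(t)\bigr)_{t\in[0,T)}$ is a nonnegative $Q_\theta$-martingale. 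It therefore converges $Q_\theta$-a.s., so $M_\theta(t)$ converges $Q_\theta$-a.s.\ to a limit in $(0,\infty]$; consequently, $Q_\theta$-a.s., $\sup_{t<T}M_\theta(t)=\infty$ if and only if $\limsup_{t\to T}M_\theta(t)=\infty$ (a c\`adl\`ag path that converges to a finite limit is bounded).

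The main step is a truncation. For $K>0$ set $\tau_K:=\inf\{t\in[0,T):M_\theta(t)>K\}$ (with $\inf\varnothing=T$), so that $\{\tau_K=T\}=\{\sup_{t<T}M_\theta(t)\le K\}$ and $M_\theta(t)\le K$ on $\{\tau_K>t\}$. For $A$ in the generating algebra $\bigcup_{s<T}\mathcal F_s$ and $t$ close to $T$, the event $A\cap\{\tau_K>t\}$ lies in $\mathcal F_t$, so $Q_\theta(A\cap\{\tau_K>t\})=\int_{A\cap\{\tau_K>t\}}M_\theta(t)\,\mathrm{d}P$. Letting $t\to T$, the right-hand integrand is bounded by $K$ and converges $P$-a.s.\ to $M_\infty\mathbf 1_{\{\tau_K=T\}}$ (dominated convergence), while the left-hand side tends to $Q_\theta(A\cap\{\tau_K=T\})$ (continuity from above), giving $Q_\theta(A\cap\{\tau_K=T\})=\int_{A\cap\{\tau_K=T\}}M_\infty\,\mathrm{d}P$. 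Both sides are finite measures in $A$ agreeing on the algebra, hence on all of $\mathcal F_T=\sigma\bigl(\bigcup_{s<T}\mathcal F_s\bigr)$. Finally I would let $K\to\infty$: $\{\tau_K=T\}\uparrow\{\sup_{t<T}M_\theta(t)<\infty\}$, so the left side increases to $Q_\theta\bigl(A\cap\{\sup_{t<T}M_\theta(t)<\infty\}\bigr)$ and, since $\{\sup_{t<T}M_\theta(t)=\infty\}$ is $P$-null, the right side increases to $\int_A M_\infty\,\mathrm{d}P$. Splitting $Q_\theta(A)$ across $\{\sup_{t<T}M_\theta(t)<\infty\}$ and its complement, and using the second paragraph to rewrite the complement as $\{\limsup_{t\to T}M_\theta(t)=\infty\}$, yields \eqref{measuredecomposition2_}.

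I do not expect a genuine obstacle here, as this is a soft argument, but the point requiring most care is the passage from the event $\{\sup_{t<T}M_\theta(t)=\infty\}$ that appears naturally in the truncation to the stated event $\{\limsup_{t\to T}M_\theta(t)=\infty\}$; this is exactly where the conjugate $Q_\theta$-martingale $1/M_\theta(t)$ is used, and it presupposes that one fixes once and for all a c\`adl\`ag modification of $M_\theta$ so that the pathwise $\sup$ and $\limsup$ are measurable. The remaining subtleties are bookkeeping: that $\tau_K$ is a stopping time, that the extension of the identity from the generating algebra to $\mathcal F_T$ is legitimate (both sides being countably additive finite measures on $\mathcal F_T$), and that the argument goes through verbatim in the case $T=\infty$.
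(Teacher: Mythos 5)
The paper does not actually prove this lemma; it simply records it as a known measure-theoretic fact and cites Durrett (Section~4.3) for it. Your proof is a correct and careful reconstruction of precisely that standard argument, transported to continuous time. The three ingredients you isolate are exactly the ones needed: $P$-a.s.\ convergence of the nonnegative $P$-martingale $M_\theta$ (so the singular set $\{\sup_{t<T}M_\theta(t)=\infty\}$ is $P$-null), the conjugate nonnegative $Q_\theta$-martingale $1/M_\theta$ (so $M_\theta$ converges $Q_\theta$-a.s.\ in $(0,\infty]$ and hence $\{\sup=\infty\}=\{\limsup=\infty\}$ up to a $Q_\theta$-null set), and the truncation at the first passage times $\tau_K$ combined with dominated convergence and the $\pi$-system argument to pass from $\bigcup_{s<T}\mathcal F_s$ to $\mathcal F_T$. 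The point you flag yourself --- that one must fix a c\`adl\`ag version of $M_\theta$ so that $\sup$ and $\limsup$ are $\mathcal F_T$-measurable and $\tau_K$ is a stopping time --- is indeed the only place where the continuous-time setting requires more than the discrete-time proof in Durrett, and you handle it correctly. There is no gap; this is the proof the paper is implicitly invoking by citation.
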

In view of this lemma one will be interested in identifying the set of values of $\theta$ 
for which $ \limsup_{t \to T} M_{\theta}(t) < \infty \ Q_{\theta}$-a.s., in which case 
$Q_{\theta} \ll P$ on $ \mathcal{F}_T$. An important tool for doing this is the so-called 
spine decomposition.
\begin{Lemma}[Spine decomposition]
\label{spine_decomposition}
\begin{align}
\label{spine_dec}
E^{\tilde{Q}_{\theta}} \Big( M_{\theta}(t) \big\vert \tilde{\mathcal{G}}_T \Big) &= 
\exp \Big( \int_0^t \log \theta^+(s) \mathrm{d} \xi^+_s + \int_0^t \log \theta^-(s) \mathrm{d} \xi^-_s \nonumber \\
&\qquad \qquad + \lambda \int_0^t (2 - \theta^+(s) - \theta^-(s)) \mathrm{d}s - \beta \int_0^t \vert \xi_s \vert^p \mathrm{d}s \Big) \nonumber \\
+ &\sum_{u < node_t(\xi)} \exp \Big( \int_0^{S_u} \log \theta^+(s) \mathrm{d} \xi^+_s + \int_0^{S_u} \log \theta^-(s) \mathrm{d} \xi^-_s \nonumber \\
&\qquad \qquad + \lambda \int_0^{S_u} (2 - \theta^+(s) - \theta^-(s)) \mathrm{d}s - \beta \int_0^{S_u} \vert \xi_s \vert^p \mathrm{d}s \Big) \text{,}
\end{align}
where $ \{  S_u : u \in \xi \} $ is the set of fission times along the spine. 

The first term is called the spine term or $spine(t)$ and the second one is called the sum term or $sum(t)$.
\end{Lemma}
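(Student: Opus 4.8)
The plan is to prove the spine decomposition by conditioning the additive martingale $M_\theta(t)$ on the enlarged filtration $\tilde{\mathcal{G}}_T$, which knows the entire spine path and all the spine fission times, but knows nothing about the subtrees that branch off the spine. First I would split the sum defining $M_\theta(t)$ in \eqref{additivemart1_} into two contributions: the single term corresponding to $u = node_t(\xi)$, the spine particle alive at time $t$, and the remaining terms, which by the tree structure are organized by the fission times $S_u$ along the spine. For the spine particle, its path of positive and negative jumps up to time $t$ is exactly $(\xi^+_s)$ and $(\xi^-_s)$, and its integrated potential is $\int_0^t |\xi_s|^p\,\mathrm{d}s$; all of these are $\tilde{\mathcal{G}}_T$-measurable, so that term passes through the conditional expectation untouched, giving the $spine(t)$ term verbatim.

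Next I would handle the off-spine contribution. Under $\tilde{Q}_\theta$, as recorded in Proposition \ref{BRW_under_Q}, at each spine fission time $S_u$ the non-spine child initiates an independent copy of the original (unbiased) $P$-branching process from the spine's position $\xi_{S_u}$ at that moment. The terms in $M_\theta(t)$ coming from descendants of that child, carrying the common prefactor along the spine up to time $S_u$, are precisely $e^{(\cdots \text{ spine up to } S_u \cdots)}$ times the additive-martingale functional $M_\theta(t - S_u)$ evaluated on that independent subtree started at $\xi_{S_u}$ (more precisely, the subtree's contribution is itself an additive martingale of the same form, now weighted by $\theta^\pm$ relative to the subtree's own starting point). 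The key fact is that $M_\theta$ in the form \eqref{Q_1_}--\eqref{additivemart1_} is a $P$-martingale started from any position, so taking $\tilde{Q}_\theta$-conditional expectation over the subtree — which is $P$-distributed and independent of $\tilde{\mathcal{G}}_T$ given $\xi_{S_u}$ — replaces $M_\theta(t-S_u)$ on the subtree by its time-$0$ value, namely $1$ (evaluated at $\xi_{S_u}$). This collapses each subtree's block to the single prefactor $\exp\big(\int_0^{S_u}\log\theta^+\,\mathrm{d}\xi^+_s + \int_0^{S_u}\log\theta^-\,\mathrm{d}\xi^-_s + \lambda\int_0^{S_u}(2-\theta^+-\theta^-)\,\mathrm{d}s - \beta\int_0^{S_u}|\xi_s|^p\,\mathrm{d}s\big)$, and summing over the fissions $u < node_t(\xi)$ yields exactly $sum(t)$.

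The main obstacle is making precise the bookkeeping that the contribution of an off-spine subtree, when factored out of the common spine prefactor, is genuinely an independent copy of the additive-martingale functional — one has to be careful that the $\theta^\pm$-weighting and the potential integral $\int |X_u(s)|^p\,\mathrm{d}s$ for a particle $u$ in such a subtree decompose cleanly as (spine part up to $S_u$) $\times$ (subtree part from $S_u$ onward), using that jump increments and time-integrals are additive over the path and that $\beta$ and $\theta^\pm$ depend only on calendar time and current position, not on genealogy. Once this factorization is in hand, the tower property applied subtree-by-subtree (the subtrees being conditionally independent given $\tilde{\mathcal{G}}_T$) gives the result; I would invoke the description in Proposition \ref{BRW_under_Q} together with the $P$-martingale property from Proposition \ref{Q_theta} to justify replacing each conditional subtree expectation by $1$. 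I would also note that the interchange of conditional expectation with the sum over $u < node_t(\xi)$ is justified by monotone convergence since all summands are nonnegative.
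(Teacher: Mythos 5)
Your proof is correct. Note that the paper itself does not reprove this lemma — it states it and refers to Hardy \& Harris \cite{2} for proofs of the spine machinery — and your argument is precisely the standard one from that framework: split $M_\theta(t)$ into the $node_t(\xi)$ term (which is $\tilde{\mathcal{G}}_T$-measurable and hence passes through) plus the blocks indexed by the spine fission times; factor each block into the spine prefactor up to $S_u$ times a subtree additive-martingale functional; then use that under $\tilde{Q}_\theta$ each off-spine subtree is an independent $P$-copy so its martingale term has conditional expectation equal to its starting value $1$.

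One small point worth tightening: when you write that the subtree contribution is ``$M_\theta(t-S_u)$ evaluated on that independent subtree \ldots now weighted by $\theta^\pm$ relative to the subtree's own starting point,'' this could be misread as shifting the argument of $\theta^\pm$. In fact the $\theta^\pm$ in the subtree block are still evaluated at calendar time $s \in [S_u, t]$, not at $s - S_u$; what makes the conditional expectation collapse to $1$ is that the subtree additive functional, with those calendar-time weights, is a $P$-martingale in $t \geq S_u$ started at value $1$ at $t = S_u$ (each Poisson-exponential factor from Lemma \ref{lem_po_mart} is a martingale on $[S_u, T)$ with initial value $1$, and the $-\beta\int_{S_u}^t|X_v(s)|^p\,\mathrm{d}s$ term is cancelled by the Many-to-One factor $e^{\int_{S_u}^t\beta|\xi_s|^p\,\mathrm{d}s}$). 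With that clarification the argument is complete; the nonnegativity-justified interchange of sum and conditional expectation and the conditional independence of the subtrees given $\tilde{\mathcal{G}}_T$ are both correctly invoked.
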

\section{Explosion: proof of Theorem \ref{criticalrw}}
\subsection{Case $p \leq 1$}
Firstly, we shall prove that $T_{explo} = \infty \ P^x$-a.s. if the exponent of the branching rate $p$ is $ \leq 1$. 
As in the proof of Theorem \ref{criticalbbm} a) from \cite{4} for the BBM model it will be sufficient to show that $E|N_t| < \infty$ 
for some $t > 0$ as it is explained below.

Let us begin with the simple observation, which says that the starting position of the branching process is not important in 
Theorem \ref{criticalrw}. Thus we shall take it to be $0$ in the rest of this section.
\begin{Proposition}
\label{xy}
\[
P^x \big( T_{explo} = \infty \big) = P^y \big( T_{explo} = \infty \big) 
\qquad \forall x,y \in \mathbb{Z} \text{.}
\]
\end{Proposition}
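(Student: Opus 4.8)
The plan is to show that each side dominates the other. I would argue that $P^x$-a.s.\ some particle eventually occupies the site $y$, then restart a fresh $P^y$-process from that particle and use that explosion of any subtree forces explosion of the whole population; the symmetric argument closes the loop. To carry out the ``some particle visits $y$'' step cleanly, I would work under the spine measure $\tilde{P}^x$ of Section~4 (with time set $[0,\infty)$): the spine path $(\xi_t)_{t\ge 0}$ is an ordinary continuous-time symmetric random walk on $\mathbb{Z}$ started at $x$, and — crucially — the spine lineage itself never accumulates branch events in finite time. Indeed, its branch times form a Cox process directed by $t\mapsto\beta|\xi_t|^p$, and $\int_0^t\beta|\xi_s|^p\,\diffd s<\infty$ for every finite $t$ because $\xi$, being a plain random walk, makes only finitely many jumps in $[0,t]$; hence $node_t(\xi)\in N_t$ is a bona fide particle for every $t\ge 0$, even on $\{T_{explo}<\infty\}$. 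Since the symmetric walk on $\mathbb{Z}$ is recurrent, $\sigma:=\inf\{t\ge 0:\xi_t=y\}$ is $\tilde{P}^x$-a.s.\ finite, and at time $\sigma$ the particle $node_\sigma(\xi)$ sits at $y$. As this is an event about the branching process alone (and $P^x=\tilde{P}^x|_{\mathcal{F}_\infty}$), the same holds $P^x$-a.s.

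Next, let $\mathcal{T}_\sigma$ denote the subtree of descendants of $node_\sigma(\xi)$, with time reset to start at $\sigma$. Since $\sigma$ is a $\mathcal{G}$-measurable stopping time and the construction is Markovian, the (strong) branching Markov property identifies $\mathcal{T}_\sigma$, viewed as a plain branching random walk, with a $P^y$-process that is conditionally independent of the pre-$\sigma$ history. Explosion is monotone under passing to subtrees: if $\mathcal{T}_\sigma$ had infinitely many particles at its internal time $s$, then $|N_{\sigma+s}|=\infty$, so the whole population has exploded by then. Hence on $\{T_{explo}=\infty\}$ — where $\sigma<\infty$ a.s.\ — the subtree $\mathcal{T}_\sigma$ does not explode either, and therefore
\[
P^x(T_{explo}=\infty)\ \le\ P^x\big(\mathcal{T}_\sigma\text{ does not explode}\big)\ =\ \tilde{E}^x\big[P^y(T_{explo}=\infty)\big]\ =\ P^y(T_{explo}=\infty),
\]
using $\mathcal{T}_\sigma\sim P^y$ conditionally on the past for the middle equality. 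Interchanging the roles of $x$ and $y$ gives the reverse inequality, and hence equality.

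The one genuinely delicate point — and what I expect to be the main obstacle — is the interplay with explosion: one must be sure the ``restart at $y$'' can be performed even though $\{T_{explo}<\infty\}$ may a priori have positive probability. This is precisely why I would route the argument through the spine rather than through the initial particle's own trajectory (which branches, hence disappears, after a finite time) or through an arbitrary lineage (which one would first have to show does not accumulate branch events before reaching $y$). The remaining ingredients — recurrence of the symmetric continuous-time random walk, the strong Markov property at the stopping time $\sigma$, and the monotonicity of explosion under passing to subtrees — are routine and I would not belabour them.
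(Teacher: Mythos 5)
Your proposal is correct, and the backbone of the argument is the same as the paper's: locate some particle at position $y$ at a finite time, observe that the subtree it initiates is a $P^y$-branching random walk, and use that explosion of a subtree forces explosion of the whole tree to conclude $P^x(T_{explo}=\infty)\le P^y(T_{explo}=\infty)$; then symmetrize. Where you differ is in how the ``some particle reaches $y$'' step is implemented. The paper simply introduces the first passage time $T_y:=\inf\{t:\exists u\in N_t\ \text{s.t.}\ X^u_t=y\}$ of the whole branching system, asserts $T_y<\infty$ ``because a random walk started from any level $x$ will hit any level $y$,'' and applies the strong Markov property at $T_y$. You route the same step through the spine measure $\tilde{P}^x$, noting that the spine path is a recurrent continuous-time random walk and that the spine lineage accumulates only finitely many fission events in finite time, so $node_\sigma(\xi)$ is a bona fide particle of $N_\sigma$ at the a.s.\ finite hitting time $\sigma$ of $y$. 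Your version is more careful on precisely the point the paper leaves implicit: the initial particle's own trajectory terminates at its first fission, so one needs to exhibit a never-dying lineage whose spatial trajectory is a random walk (the spine is a canonical choice; the always-first-born lineage would serve just as well). One small bookkeeping wrinkle you should be explicit about: $\sigma$ and the event that the subtree from $node_\sigma(\xi)$ explodes are $\tilde{\mathcal{F}}$-measurable but not $\mathcal{F}$-measurable (they depend on the spine labelling), so the chain of inequalities is best kept entirely under $\tilde{P}^x$ — since $\{T_{explo}=\infty\}\in\mathcal{F}_\infty$ is contained in the event that the subtree rooted at $node_\sigma(\xi)$ does not explode, one gets $P^x(T_{explo}=\infty)=\tilde{P}^x(T_{explo}=\infty)\le\tilde{P}^x(\text{subtree from }node_\sigma(\xi)\text{ does not explode})=P^y(T_{explo}=\infty)$, the last equality by the strong Markov property of the spine at $\sigma$. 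With that adjustment your argument is complete and, if anything, tightens the paper's stated reasoning.
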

\begin{proof}
Take any $x$ and $y \in \mathbb{Z}$ and start a branching random walk from $x$. 
Let $T_y$ be the first passage time of the process to level $y$. That is, 
\[
T_y := \inf \{ t \ : \ \exists u \in N_t \text{ s.t. } X^u_t = y \} \text{.}
\]
$T_y < \infty$ because a random walk started from any level $x$ will hit any 
level $y$. Then by the strong Markov property of the branching process the subtree initiated from 
$y$ at time $T_y$ has the same law as a branching random walk started from $y$. 
Consequently, if the explosion does not happen in the big tree started from $x$, it cannot 
happen in its subtree started from $y$. Thus
\[
P^x \big( T_{explo} = \infty \big) \leq P^y \big( T_{explo} = \infty \big) \text{.}
\] 
Since $x$ and $y$ were arbitrary it follows that
\[
P^x \big( T_{explo} = \infty \big) = P^y \big( T_{explo} = \infty \big) 
\qquad \forall x,y \in \mathbb{Z} \text{.}
\]
\end{proof}
One important corollary to the previous result is the following 0-1 law. 
\begin{Corollary}
\label{zeroone}
\[
P \big( T_{explo} = \infty \big) \in \{ 0, 1 \} \text{.}
\]
\end{Corollary}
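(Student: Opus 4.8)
The plan is to prove the 0-1 law by exploiting the self-similarity (branching property) of the process at its first fission time, together with the position-independence of the non-explosion probability supplied by Proposition \ref{xy}. Write $q := P(T_{explo} = \infty)$; since the process is started from $0$ this is $P^0(T_{explo}=\infty)$, and by Proposition \ref{xy} we in fact have $q = P^z(T_{explo} = \infty)$ for \emph{every} $z \in \mathbb{Z}$. The goal is to show $q = q^2$.

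First I would check that the first fission time $S_1 := \inf\{t : |N_t| \geq 2\}$ is almost surely finite (and strictly positive). Before $S_1$ there is a single particle performing a recurrent continuous-time random walk $(X_t)$ on $\mathbb{Z}$, branching at instantaneous rate $\beta|X_t|^p$. Conditionally on the path, $S_1$ is the first point of a time-changed Poisson process, so $P(S_1 > t \mid \text{path}) = \exp\big(-\int_0^t \beta |X_s|^p\,\mathrm{d}s\big)$. Since the walk visits level $1$ infinitely often, each visit of independent Exp$(2\lambda)$ duration, the total occupation time of level $1$ is infinite, whence $\int_0^t \beta|X_s|^p\,\mathrm{d}s \to \infty$ as $t\to\infty$ and therefore $S_1 < \infty$ a.s. (Also note that, since particles never die, $|N_t|$ is non-decreasing in $t$, so $\{t : |N_t| < \infty\} = [0, T_{explo})$ or $[0,T_{explo}]$; in particular $T_{explo} = \infty$ is exactly the event $\{|N_t| < \infty \text{ for all } t\geq 0\}$, and $T_{explo}\geq S_1>0$.)

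Next I would apply the strong Markov branching property at the stopping time $S_1$. Let $Y := X^{\varnothing}_{S_1} \in \mathbb{Z}$ be the position at which the first fission occurs. Conditionally on $\mathcal{F}_{S_1}$, the two particles born at $Y$ initiate two independent branching random walks, each distributed as the process under $P^Y$, and for $t \geq S_1$ the population $N_t$ is the disjoint union of these two sub-populations. Hence $|N_t| < \infty$ for all $t$ if and only if both subtrees are non-explosive, so
\[
q \;=\; P(T_{explo} = \infty) \;=\; E\big[\, P^Y(T_{explo} = \infty)^2 \,\big] \;=\; E\big[q^2\big] \;=\; q^2,
\]
where the second equality uses the conditional independence of the two subtrees given $\mathcal{F}_{S_1}$ and the third uses Proposition \ref{xy} (the inner probability equals $q$ for the random value $Y\in\mathbb{Z}$). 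Thus $q = q^2$, i.e. $q \in \{0,1\}$.

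The only delicate point is making the branching-property decomposition at $S_1$ fully rigorous: identifying the two descendant subtrees as independent $P^Y$-copies given $\mathcal{F}_{S_1}$, and verifying that explosion of the whole process coincides with explosion of at least one of the two subtrees. Both are routine consequences of the standard construction of the branching random walk, so I do not expect any genuine obstruction; if one prefers to avoid invoking $S_1$ altogether, the same argument can be run by conditioning on $\mathcal{F}_t$ for a fixed $t$ and using that, on $\{|N_t| < \infty\}$, each of the finitely many particles alive at time $t$ spawns an independent copy of the process from its current position.
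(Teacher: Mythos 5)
Your argument is exactly the paper's: condition on the first fission, decompose into two independent subtrees via the branching property, replace $P^{X_1}$ by $P$ using Proposition \ref{xy}, and conclude $q=q^2$. The only difference is that you additionally verify $S_1<\infty$ a.s. (which the paper implicitly assumes), a reasonable extra check but not a change of approach.
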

\begin{proof}
If $X_1$ is the position of the first split then from the branching property we have 
\[
P \big( T_{explo} = \infty \big) = E \Big( \big( P^{X_1} (T_{explo} = \infty)
\big)^2 \Big) = \big( P \big( T_{explo} = \infty \big) \big)^2 \text{.}
\]
Thus $ P ( T_{explo} = \infty ) \in \{ 0, 1 \}$.
\end{proof}
Let us now state another useful fact.
\begin{Proposition}
\label{propexplo}
Take some deterministic time $t > 0$.
\[
\text{If} \ P \big( T_{explo} < t \big) = 0
\text{ then} \ P^x \big( T_{explo} < t \big) = 0 \ \forall x \in \mathbb{Z} \text{.}
\]
\end{Proposition}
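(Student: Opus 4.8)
The plan is to transfer the non-explosion statement from the origin to an arbitrary starting site $x$ by running the process from $0$, waiting for it to place a particle at $x$, and then invoking the strong Markov property of the branching random walk (exactly as in the proof of Proposition~\ref{xy}). Fix $x \in \mathbb{Z}$; the case $x = 0$ is trivial since then $P^x = P$, so assume $x \neq 0$. Start the branching random walk from $0$ and let
\[
\sigma := \inf\{ s \geq 0 : \exists\, u \in N_s \text{ with } X^u_s = x \}
\]
be the first time some particle occupies the site $x$. The two ingredients I need are: (i) for every $\eps > 0$ one has $P(\sigma < \eps) > 0$; and (ii) an inequality bounding $P(T_{explo} < t)$ below in terms of $P^x(T_{explo} < t - \eps)$.

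For (i), I would exhibit an explicit positive-probability event on which a particle sits at $x$ before time $\eps$: the event that before time $\eps$ the initial particle makes at least $|x|$ jumps, the first $|x|$ of which are all in the direction of $x$, and no fission occurs along the initial particle before its $|x|$-th jump. The jump times of the walk are independent exponentials, the jump directions are independent fair signs, and both are independent of the branching clock; moreover the branching rate $\beta |y|^p$ stays bounded by the finite constant $C := \beta\,(\max(1,|x|))^{p}$ while the particle passes through the sites strictly between $0$ and $x$. Hence this event has probability at least
\[
e^{-C\eps}\, 2^{-|x|}\, P\big(\text{a rate-}2\lambda\text{ Poisson process has at least }|x|\text{ points in }[0,\eps)\big) > 0 ,
\]
and on it $\sigma < \eps$.

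For (ii), I would apply the strong Markov property at the stopping time $\sigma$: on $\{\sigma < \eps\}$, conditionally on $\mathcal{F}_\sigma$, the subtree rooted at the particle located at $x$ at time $\sigma$ is a copy of a branching random walk started from $x$. If that subtree explodes within time $t - \sigma$ of its own clock, then the whole population is infinite at an absolute time strictly less than $t$, so $T_{explo} < t$. Since $t - \sigma > t - \eps$ on $\{\sigma < \eps\}$, explosion of the subtree within time $t - \eps$ forces explosion within time $t - \sigma$, whence
\[
P\big(T_{explo} < t\big) \;\geq\; E\Big( \mathbf{1}_{\{\sigma < \eps\}}\, P^x\big(T_{explo} < t - \eps\big) \Big) \;=\; P(\sigma < \eps)\, P^x\big(T_{explo} < t - \eps\big).
\]
Now if $P(T_{explo} < t) = 0$ then, as $P(\sigma < \eps) > 0$, we get $P^x(T_{explo} < t - \eps) = 0$ for every $\eps \in (0,t)$. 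Letting $\eps = 1/n \downarrow 0$ and using the increasing union $\{T_{explo} < t\} = \bigcup_{n} \{T_{explo} < t - 1/n\}$ yields $P^x(T_{explo} < t) = 0$, as required.

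The step I expect to require the most care is (ii): one must set up the strong Markov decomposition at the random time $\sigma$ cleanly and keep careful track of the direction of the inequalities when passing from the subtree's local clock to absolute time and then letting $\eps \downarrow 0$. Ingredient (i) is routine once the independence of the walk and the branching mechanism is exploited, and the boundedness of the branching rate over the finitely many sites between $0$ and $x$ takes care of all $p \geq 0$ uniformly.
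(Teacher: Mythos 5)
Your proof is correct and follows essentially the same route as the paper: run the process from $0$, wait for the hitting time of $x$ (your $\sigma$ is the paper's $T_x$), apply the strong Markov property to obtain $P(T_{explo}<t)\geq P(\sigma<\eps)\,P^x(T_{explo}<t-\eps)$, and then let $\eps\downarrow 0$. The only difference is that you supply an explicit lower-bound construction for $P(\sigma<\eps)>0$, which the paper simply asserts.
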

\begin{proof}
Consider a branching process started from $0$. Take any $ \epsilon \in (0, t)$. Let $T_x$ be the hitting time of level $x$ as in Proposition \ref{xy}.
Then there is a positive probability that the process will hit level $x$ before time $\epsilon$. Then
\begin{align*}
0 &= P \big( T_{explo} < t \big) \geq P \big( T_{explo} < t , T_x < \epsilon \big) \geq 
P \big( T^x_{explo} < t - \epsilon , T_x < \epsilon \big) \\
&= E \Big( P \big( T^x_{explo} < t - \epsilon , T_x < \epsilon | T_x\big)\Big) 
= P \big( T_x < \epsilon \big) P^x \big( T_{explo} < t - \epsilon \big) \text{,}
\end{align*}
where $T^x_{explo}$ is the explosion time of the subtree started from $x$. 
Thus, since $P \big( T_x < \epsilon \big) > 0$ we find that
\[
P^x \big( T_{explo} < t - \epsilon \big) = 0 \text{.}
\]
Since $\epsilon$ was arbitrary, letting $ \epsilon \downarrow 0$ gives the result.
\end{proof}
As a consequence of Proposition \ref{propexplo} we get the following corollary.
\begin{Corollary}
\label{texplo}
Let $t > 0$ be any deterministic time. 
\[
\text{if } P \big( T_{explo} \geq t \big) = 1 \text{ then } P \big( T_{explo} = \infty \big) = 1 \text{.}
\]
In particular, if $E|N_t| < \infty$ then $P(T_{explo} < \infty) = 0$.
\end{Corollary}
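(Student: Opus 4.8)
The plan is to deduce the ``in particular'' assertion from the first implication, and to prove the first implication by a bootstrapping induction: the hypothesis says the population does not explode before time $t$, and I would upgrade this step by step to ``no explosion before time $\tfrac{k}{2}t$'' for every integer $k$, using Proposition \ref{propexplo} to make the statement hold from \emph{every} starting point and the branching property to advance from time $\tfrac{k}{2}t$ to time $\tfrac{k+1}{2}t$.

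First I would rewrite the hypothesis $P(T_{explo}\ge t)=1$ as $P^0(T_{explo}<t)=0$ and apply Proposition \ref{propexplo} (its proof is valid verbatim at any deterministic time) to get $P^x(T_{explo}<t)=0$ for all $x\in\mathbb{Z}$; this is the base case $k=2$ of the claim that $P^x(T_{explo}<\tfrac{k}{2}t)=0$ for all $x\in\mathbb{Z}$ and all integers $k\ge 2$. For the inductive step, fix $x$ and argue under $P^x$. Since $T_{explo}\ge t>\tfrac{t}{2}$ almost surely and $s\mapsto|N_s|$ is non-decreasing, we have $|N_{t/2}|<\infty$ a.s. By the branching property, conditionally on $\mathcal{F}_{t/2}$ the subtrees rooted at the finitely many particles $u\in N_{t/2}$ are independent branching random walks with laws $P^{X^u_{t/2}}$, and $T_{explo}=\tfrac{t}{2}+\min_{u\in N_{t/2}}T^{(u)}_{explo}$, where $T^{(u)}_{explo}$ denotes the explosion time of the $u$-th subtree. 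Hence, by countable subadditivity and the inductive hypothesis,
\[
P^x\!\left(T_{explo}<\tfrac{k+1}{2}t\right)=P^x\!\left(\exists\,u\in N_{t/2}:\ T^{(u)}_{explo}<\tfrac{k}{2}t\right)\le E^x\!\left[\sum_{u\in N_{t/2}}P^{X^u_{t/2}}\!\left(T_{explo}<\tfrac{k}{2}t\right)\right]=0.
\]
Letting $k\to\infty$ (the events increase to $\{T_{explo}<\infty\}$) gives $P^x(T_{explo}<\infty)=0$ for every $x$, and in particular $P(T_{explo}=\infty)=1$.

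For the ``in particular'' part, if $E|N_t|<\infty$ then $|N_t|<\infty$ $P$-a.s., so $t\in\{s:|N_s|<\infty\}$ and therefore $T_{explo}=\sup\{s:|N_s|<\infty\}\ge t$ a.s.; that is, $P(T_{explo}\ge t)=1$, and the first part yields $P(T_{explo}<\infty)=0$. I expect no genuine obstacle: the only points needing a moment's thought are that $|N_{t/2}|$ must be finite a.s. for the union bound to be legitimate — which is exactly where the hypothesis is used — and the decomposition of $T_{explo}$ through the subtree explosion times, both immediate from the branching property. (One could equally well avoid carrying ``for all $x$'' through the induction by re-applying Proposition \ref{propexplo} at each stage.)
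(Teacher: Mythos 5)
Your proof is correct and follows essentially the same bootstrapping induction the paper sketches (using Proposition \ref{propexplo} to make the no-explosion-before-$t$ statement hold from every starting point, then decomposing the tree at an intermediate time and invoking the branching property). The only refinement you add is decomposing at time $t/2$ rather than at $t$, which cleanly guarantees $|N_{t/2}|<\infty$ almost surely and avoids worrying about the boundary event $\{T_{explo}=t\}$; this is a harmless and sensible tightening of the paper's one-line argument.
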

\begin{proof}
The result follows by induction since if the original tree almost surely does not explode by time 
$t$ then none of its subtrees initiated at time $t$ will explode by time $2 t$ and one can repeat this 
argument any number of times. 
\end{proof}
\begin{proof}[Proof of Theorem \ref{criticalrw} a)]
We wish to show that if $p \leq 1$ then $P(T_{explo} = \infty) = 1$. From Corollary \ref{texplo}, 
it is sufficient to show that $E(|N_t|) < \infty$ for some $t > 0$. 

By the Many-to-One Theorem (Theorem \ref{manytoone})
\[
E \Big( \vert N_t \vert \Big) = E \Big( \sum_{u \in N_t} 1 \Big) = 
\tilde{E} \Big( e^{\int_0^t \beta \vert \xi_s \vert^p \mathrm{d}s} \Big) \text{,}
\]
where $(\xi_t)_{t \geq 0}$ is a continouos-time random walk under $ \tilde{P}$. Recall, 
$\xi_t = \xi_t^+ - \xi_t^-$, where $(\xi_t^+)_{t \geq 0}$ and $(\xi_t^-)_{t \geq 0}$ 
are two independent Poisson processes with jump rate $\lambda$. Then 
\begin{align*}
\tilde{E} \Big( e^{\int_0^t \beta \vert \xi_s \vert^p \mathrm{d}s} \Big) 
& \leq \tilde{E} \Big( e^{t \beta \sup_{0 \leq s \leq t} |\xi_s|^p} \Big) \\
& = \tilde{E} \Big( e^{t \beta \sup_{0 \leq s \leq t} |\xi_s^+ - \xi_s^-|^p} \Big) 
\leq \tilde{E} \Big( e^{t \beta \sup_{0 \leq s \leq t} \big( (\xi_s^+)^p \vee (\xi_s^-)^p \big) } \Big) \\
& = \tilde{E} \Big( e^{t \beta \big( (\xi_t^+)^p \vee (\xi_t^-)^p \big) } \Big) 
\leq \tilde{E} \Big( e^{t \beta \big( (\xi_t^+)^p + (\xi_t^-)^p \big) } \Big) \\
& = \Big[ \tilde{E} \Big( e^{t \beta (\xi_t^+)^p} \Big) \Big]^2 
\leq \Big[ \tilde{E} \Big( e^{t \beta \xi_t^+ } \Big) \Big]^2
\end{align*}
since $\xi^+$ is supported on $ \{ 0, 1, 2, ...\} $ whence $(\xi_t^+)^p \leq \xi_t^+$ for $p \in [0,1]$. Then 
\[
\tilde{E} \Big( e^{t \beta \xi_t^+ } \Big) = \sum_{n=0}^{\infty} e^{\beta t n} \dfrac{(\lambda t)^n}{n!} e^{- \lambda t}
= \exp \big\{ e^{ \beta t} \lambda t - \lambda t \big\} < \infty \quad \forall t \geq 0 \text{.}
\]
Thus $E(|N_t|) < \infty$ for all $t > 0$.
\end{proof}
\subsection{Case $p > 1$} 
\begin{proof}[Proof of Theorem \ref{criticalrw} b)]
We wish to show that if $p > 1$ then $P(T_{explo} < \infty) = 1$. By Corollary \ref{zeroone} this is equivalent to 
$P(T_{explo} < \infty) > 0$. It would be sufficient to prove that $P(T_{explo} \leq T) > 0$ for all $T > 0$. 
For a contradiction we suppose that there exists $T > 0$ s.t.
\begin{equation}
\label{pexplosion3}
P(T_{explo} \leq T) = 0 \text{.}
\end{equation}
We fix this $T$ for the rest of this subsection. Under the assumption \eqref{pexplosion3} that there is no explosion 
before time $T$ we can perfom the usual spine construction on $[0, T)$. The key steps of the proof can then be summarised as follows:
\begin{enumerate}
\item We choose $\theta^+$, $\theta^- : [0, T) \to [0, \infty)$ such that at time $T$ 
\begin{description}
\item[(A)] the spine process $\xi_t$ goes to $ \infty$ under $\tilde{Q}_{\theta}$
\item[(B)] the additive martingale $M_{\theta}$ from satisfies $\limsup_{t \to T} M_{\theta}(t) < \infty \ Q_{\theta}$-a.s.
\end{description}
\item We deduce that $Q_{\theta} \ll P$ on $\mathcal{F}_T$, whence with positive 
$P$-probability one particle goes to $\infty$ at time $T$ giving infinitely many births 
along its path.
\item We get a contradiction to \eqref{pexplosion3}.
\end{enumerate}
We take $ \theta^-(\cdot) \equiv 1$. That is, we leave the negative jumps of the spine process 
unaltered under $ \tilde{Q}_{\theta}$. $ \theta^+(\cdot)$ needs to be chosen carefully such that both (A) and (B) above 
are satisfied. One such choice is 
\begin{equation}
\label{eq_llll}
\theta^+(s) = (T - s)^{-c} \text{ , } s \in [0, T) \text{,}
\end{equation}
where $c > \frac{p}{p-1}$ (e.g. take $c = \frac{p}{p-1} + 1$).

The additive martingale \eqref{additivemart1_} in this case takes the following form (with $\theta^+(\cdot)$ defined above)
\begin{align}
\label{Qexplo}
M_{\theta}(t) = \sum_{u \in N_t} \exp \Big( \int_0^t \log \theta^+(s) \mathrm{d}X^+_u(s) &+ 
\int_0^t \lambda \big( 1 - \theta^+(s) \big) \mathrm{d}s \nonumber \\
&- \beta \int_0^t \vert X_u(s) \vert^p \mathrm{d}s \Big) \text{ , } \ t \in [0, T) \text{,}
\end{align}
If we can now show that 
\begin{equation}
\label{eq099}
\limsup_{t \to T} M_{\theta}(t) < \infty \qquad Q_{\theta} \text{-a.s.} 
\end{equation}
it would follow from Lemma \ref{durrett_lem2_} that $Q_{\theta} \ll P$ on $\mathcal{F}_T$. 

To prove \eqref{eq099} it is sufficient to show that 
\begin{equation}
\label{eq098}
\limsup_{t \to T} E^{\tilde{Q}_{\theta}} \Big( M_{\theta}(t) \big\vert \tilde{\mathcal{G}}_T \Big) < \infty 
\qquad \tilde{Q}_{\theta} \text{-a.s.,}
\end{equation}
since if \eqref{eq098} holds then by Fatou's lemma
\begin{align*}
E^{\tilde{Q}_{\theta}} \Big( \liminf_{t \to T} M_{\theta}(t) \big\vert \tilde{\mathcal{G}}_T \Big) &\leq
\liminf_{t \to T} E^{\tilde{Q}_{\theta}} \Big( M_{\theta}(t) \big\vert \tilde{\mathcal{G}}_T \Big) \\
&\leq \limsup_{t \to T} E^{\tilde{Q}_{\theta}} \Big( M_{\theta}(t) \big\vert \tilde{\mathcal{G}}_T \Big)
< \infty \quad \tilde{Q}_{\theta} \text{-a.s.,}
\end{align*}
therefore $\liminf_{t \to T} M_{\theta}(t) < \infty \ \tilde{Q}_{\theta}$-a.s. and hence also $Q_{\theta}$-a.s.
Then since $ \frac{1}{M_{\theta}(t)}$ is a positive $Q_{\theta}$-supermartingale on $[0, T)$, it must converge
$Q_{\theta}$-a.s., hence
\[
\limsup_{t \to T} M_{\theta}(t) = \liminf_{t \to T} M_{\theta}(t) < \infty \qquad Q_{\theta} \text{-a.s.} 
\]
So let us now prove \eqref{eq098}. Recall the spine decomposition \eqref{spine_dec}:
\[
E^{\tilde{Q}_{\theta}} \Big( M_{\theta}(t) \big\vert \tilde{\mathcal{G}}_T \Big) = spine(t) + sum(t) \text{,}
\]
where
\[
spine(t) = \exp \Big( \int_0^t \log \theta^+(s) \mathrm{d} \xi^+_s + \int_0^t
\lambda \big( 1 - \theta^+(s) \big) \mathrm{d}s - \int_0^t \beta | \xi_s|^p \mathrm{d}s \Big)
\]
and
\[
sum(t) = \sum_{u < node_t(\xi)} spine(S_u) \text{.}
\]
We start by proving the following assertion about the spine term. 
\begin{Proposition}
\label{spine_bound2}
There exist some $ \tilde{Q}_{\theta}$-a.s. finite positive random variables $C'$, $C''$ and a random 
time $T' \in [0, T)$ such that $ \forall t > T'$
\[
spine(t) \leq C' \exp \Big( - C'' (T - t)^{-p(c-1) + 1} \Big) \text{.}
\]
\end{Proposition}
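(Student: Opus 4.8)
The plan is to take logarithms in the definition of $spine(t)$ and estimate the three resulting terms separately, using the single-particle SLLN results of Section 3 applied to the spine, which under $\tilde{Q}_{\theta}$ is a biased random walk whose upward jumps occur at instantaneous rate $\lambda\theta^+(t) = \lambda(T-t)^{-c}$ and whose downward jumps occur at rate $\lambda$ (Proposition \ref{BRW_under_Q} with $\theta^-\equiv 1$).

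First I would record the precise asymptotics of the spine's trajectory near $t=T$. Since $\int_0^t \lambda\theta^+(s)\,\mathrm{d}s = \tfrac{\lambda}{c-1}\big((T-t)^{-(c-1)}-T^{-(c-1)}\big)\to\infty$ as $t\to T$, Proposition \ref{poisson_c} (via Proposition \ref{prop_ch_meas}, cf.\ the displays at the end of Section 3) gives $\tilde{Q}_{\theta}$-a.s.\ that $\xi_t^+\sim \tfrac{\lambda}{c-1}(T-t)^{-(c-1)}$ and $\xi_t^-\sim\lambda t$, hence $\xi_t\sim\tfrac{\lambda}{c-1}(T-t)^{-(c-1)}$ as $t\to T$ (this also gives property (A)). In particular, for a fixed small $\eps\in(0,1)$ there is a $\tilde{Q}_{\theta}$-a.s.\ finite random time $T_\eps<T$ such that $(1-\eps)\tfrac{\lambda}{c-1}(T-s)^{-(c-1)}\le \xi_s$ and $\xi_s^+\le (1+\eps)\tfrac{\lambda}{c-1}(T-s)^{-(c-1)}$ for all $s\in[T_\eps,T)$.

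Next I would bound the three terms in $\log spine(t)=\int_0^t\log\theta^+(s)\,\mathrm{d}\xi_s^+ +\lambda\int_0^t(1-\theta^+(s))\,\mathrm{d}s-\beta\int_0^t|\xi_s|^p\,\mathrm{d}s$. For the first term, integration by parts (Proposition \ref{poisson_b}, with $\log\theta^+=-c\log(T-\cdot)\in C^1([0,T))$) gives $\int_0^t\log\theta^+(s)\,\mathrm{d}\xi_s^+ = \log\theta^+(t)\,\xi_t^+ - c\int_0^t (T-s)^{-1}\xi_s^+\,\mathrm{d}s \le \log\theta^+(t)\,\xi_t^+$, the subtracted integral being nonnegative; for $t$ close to $T$, where $\log\theta^+(t)=c|\log(T-t)|>0$, this is at most $B\,|\log(T-t)|\,(T-t)^{-(c-1)}$ for a constant $B$ by the spine asymptotics. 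The second term is deterministic and bounded above by $\lambda T$. For the third term, the lower bound on $\xi_s$ and $p(c-1)>1$ give $\int_0^t|\xi_s|^p\,\mathrm{d}s \ge (1-\eps)^p\big(\tfrac{\lambda}{c-1}\big)^p\tfrac{1}{p(c-1)-1}\big((T-t)^{1-p(c-1)}-(T-T_\eps)^{1-p(c-1)}\big)$, so $-\beta\int_0^t|\xi_s|^p\,\mathrm{d}s\le A_0 - D\,(T-t)^{1-p(c-1)}$ for a $\tilde{Q}_{\theta}$-a.s.\ finite $A_0$ and a positive constant $D$.

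Combining, $\log spine(t)\le A + B\,|\log(T-t)|\,(T-t)^{-(c-1)} - D\,(T-t)^{1-p(c-1)}$ for all $t$ past some a.s.\ finite time, with $A$ a.s.\ finite and $B,D>0$. The hypothesis $c>\tfrac{p}{p-1}$ is precisely $(p-1)(c-1)>1$, i.e.\ $1-p(c-1)<-(c-1)$, so the last term dominates the middle one: there is an a.s.\ finite $T'<T$ with $B\,|\log(T-t)|\,(T-t)^{-(c-1)}\le \tfrac{D}{2}(T-t)^{1-p(c-1)}$ for $t\in(T',T)$, whence $\log spine(t)\le A-\tfrac{D}{2}(T-t)^{1-p(c-1)}$, which is the assertion with $C'=e^{A}$, $C''=D/2$ and the stated exponent $-p(c-1)+1$. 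The only genuinely delicate step is the first one — pinning down the a.s.\ asymptotics of $\xi_t$ near $t=T$ out of the competing growth of $\xi_t^+$ and $\xi_t^-$, and then getting the lower bound on $\int_0^t|\xi_s|^p\,\mathrm{d}s$ — but this is a routine application of Proposition \ref{poisson_c}; everything after that is just comparing powers of $(T-t)$.
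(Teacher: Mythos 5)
Your decomposition — take logarithms, bound the three terms of $\log spine(t)$ separately using the a.s.\ asymptotics of the biased spine, and compare powers of $(T-t)$ — is the same as the paper's, and the power-counting at the end (noting that $c>\tfrac{p}{p-1}$ is equivalent to $1-p(c-1)<-(c-1)$, so the negative term dominates) is correct.

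The one substantive difference is your treatment of the stochastic integral $\int_0^t\log\theta^+(s)\,\mathrm{d}\xi_s^+$. You apply the pathwise integration-by-parts identity of Proposition \ref{poisson_b} and simply discard the subtracted term $\int_0^t(\log\theta^+)'(s)\,\xi_s^+\,\mathrm{d}s$, which is nonnegative because $(\log\theta^+)'(s) = c/(T-s)>0$; only Proposition \ref{poisson_c} is then needed, applied to $\xi_t^+$. The paper instead invokes Proposition \ref{poisson_d}, which requires separately verifying its rather restrictive ratio hypothesis via the asymptotic $\int_0^t(T-s)^{-c}\log(T-s)\,\mathrm{d}s \sim \tfrac{1}{c-1}(T-t)^{-c+1}\log(T-t)$. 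Since only a one-sided (upper) bound on the spine term is required, your route sidesteps that verification and is slightly leaner, at the cost of not giving the sharp asymptotic of the first term (which is not needed here).

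One small inaccuracy: Proposition \ref{poisson_c} does not give $\xi_t^-\sim\lambda t$ as $t\to T$. Under $\tilde{Q}_\theta$ with $\theta^-\equiv1$, $\xi^-$ is a rate-$\lambda$ Poisson process on the bounded interval $[0,T)$, so $\int_0^T\lambda\,\mathrm{d}s=\lambda T<\infty$ and the SLLN hypothesis of Proposition \ref{poisson_c} fails; $\xi_t^-$ in fact converges a.s.\ to a finite random limit rather than being asymptotic to $\lambda t$. This does not damage your argument — all you actually use is that $\xi_t^-$ is a.s.\ bounded on $[0,T)$, which is immediate and still yields $\xi_t\sim\xi_t^+$ and the lower bound $\xi_s\geq(1-\eps)\tfrac{\lambda}{c-1}(T-s)^{-(c-1)}$ for $s$ near $T$. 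You should just replace the spurious asymptotic for $\xi^-$ with the boundedness statement.
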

\begin{proof}[Proof of Proposition \ref{spine_bound2}] 
From Proposition \ref{prop_ch_meas} under $ \tilde{Q}_{\theta}$ the process $(\xi^+_t)_{t \in [0, T)}$ is a time-inhomogeneous Poisson process of 
rate $ \lambda \theta^+(t)$ and $(\xi^-_t)_{t \in [0, T)}$ is a Poisson process of rate $ \lambda$. 

Using the standard integration-by-parts formula one can check that 
\[
\int_0^t (T - s)^{- c} \log (T - s) \mathrm{d}s \sim \frac{1}{c - 1} (T - t)^{- c + 1} \log (T - t) \text{ as } t \to T \text{.}
\]
Hence for  $ \theta^+$ defined as in \eqref{eq_llll} 
\[
\limsup_{t \to T} \dfrac{\log \theta^+(t) \int_0^t \lambda \theta^+(s) \mathrm{d}s}{\int_0^t \log \theta^+(s) \ 
\lambda \theta^+(s) \mathrm{d}s} = 1 \text{.}
\]
Also $\int_0^t \lambda \theta^+(s) \mathrm{d}s = \lambda (c - 1)^{-1} (T - t)^{-c + 1} \to \infty$ as $t \to T$ and 
$\log \theta^+(\cdot)$ is increasing. Thus from Proposition \ref{poisson_c} and Proposition \ref{poisson_d} we have that $\tilde{Q}_{\theta}$ -a.s.
\[
\frac{\xi_t}{\int_0^t \lambda \theta^+(s) \mathrm{d}s} \to 1 \text{ , } \
 \frac{\int_0^t \log \theta^+(s) \mathrm{d} \xi_s^+}{\int_0^t \log \theta^+(s)
\ \lambda \theta^+(s) \mathrm{d}s} \to 1 \text{.}
\]
Combining these observations we get that $ \forall \epsilon > 0 \ \exists \ \tilde{Q}_{\theta}$-a.s. 
finite time $T_{ \epsilon}$ such that $ \forall t > T_{ \epsilon}$ the following inequalities are true:
\begin{align*}
\int_0^t \log \theta^+(s) \mathrm{d} \xi_s^+ &< 
(1 + \epsilon) \int_0^t \log \theta^+(s) \ \lambda \theta^+(s) \mathrm{d}s \\
&= (1 + \epsilon) \int_0^t -c \log(T - s) \lambda(T - s)^{-c} \mathrm{d}s \text{;} 
\end{align*}
\begin{align*}
&|\xi_t| > (1 - \epsilon) \int_0^t \lambda \theta^+(s) \mathrm{d}s = (1 - \epsilon) \frac{\lambda}{c - 1} (T - t)^{-c + 1} \text{;} \\
&\lambda \big( 1 - \theta^+(t) \big) < 0 \text{;} \\
&\log(T - t) \lambda(T - t)^{-c} \leq \dfrac{1}{2} 
\frac{\beta(\frac{\lambda}{c-1}(1-\epsilon))^p}{\lambda c(1 + \epsilon)} (T - t)^{-(c - 1)p} \text{.}
\end{align*}
Thus, for $t > T_{ \epsilon}$ we have 
\begin{align*}
spine(t) &= \exp \Big( \int_0^t \log \theta^+(s) \mathrm{d} \xi^+_s + \int_0^t
\lambda \big( 1 - \theta^+(s) \big) \mathrm{d}s - \int_0^t \beta | \xi_s|^p \mathrm{d}s \Big) \\
&\leq C_{ \epsilon} \exp \Big\{ (1 + \epsilon) \int_0^t -c \lambda \log (T - s) (T - s)^{-c} \mathrm{d}s  \\
& \qquad \qquad \qquad - \beta \int_0^t \Big( \frac{\lambda (1 - \epsilon)}{c - 1} (T - s)^{-c + 1} \Big)^p \mathrm{d}s \Big\} \\
&\leq  C_{\epsilon}' \exp \Big\{ - \frac{1}{2} \beta \Big( \frac{\lambda (1 - \epsilon)}{c - 1} \Big)^p 
\frac{1}{p(c - 1) - 1} (T - t)^{-(c - 1)p + 1} \Big\} \text{,}
\end{align*}
where $C_{ \epsilon}$ and $C_{ \epsilon}'$ are some $\tilde{Q}_{\theta}$-a.s. finite random variables, which don't depend on $t$. 
Letting $T' = T_{\epsilon}$, $C' = C'_{\epsilon}$ and 
$C'' = \frac{1}{2} \beta \Big( \frac{\lambda (1 - \epsilon)}{c - 1} \Big)^p \frac{1}{p(c - 1) - 1}$ we finish 
the proof of Proposition \ref{spine_bound2}. 
\end{proof}
We now look at the $sum$ term: 
\begin{align*}
sum(t) & = \sum_{u < node_t(\xi)} spine(S_u) \\
& = \Big( \sum_{u < node_t(\xi), \ S_u \leq T'} spine(S_u) \Big) + 
\Big( \sum_{u < node_t(\xi), \ S_u > T'} spine(S_u) \Big) \\
& \leq \sum_{u < node_t(\xi), \ S_u \leq T'} spine(S_u) \\
& \qquad + \sum_{u < node_t(\xi), \ S_u > T'} C' \exp \Big( - C'' (T - S_u)^{-p(c-1) + 1} \Big)
\end{align*}
using Proposition \ref{spine_bound2}. The first sum is $ \tilde{Q}_{\theta}$-a.s. bounded 
since it only counts births up to time $T'$. Call an upper bound on the first sum $C_1$. 
Then we have 
\begin{equation}
\label{eq_www2}
sum(t) \leq \ C_1 + C' \sum_{n=1}^{\infty} \exp \Big( - C'' (T - S_n)^{-p(c-1) + 1} \Big) \text{,}
\end{equation}
where $S_n$ is the time of the $n^{th}$ birth on the spine.

The birth process along the spine $(n_t)_{t \in [0, T)}$ conditional on the path of the spine 
is time-inhomogeneous Poisson process (or Cox process) with birth rate $2 \beta |\xi_t|^p$ at 
time $t$. Thus as $t \to T$, almost surely under $\tilde{Q}_{\theta}$
\begin{equation}
\label{eq_deq}
n_t \sim \int_0^t 2 \beta |\xi_s|^p \mathrm{d}s \sim 2 \beta \Big( \frac{\lambda}{c - 1} \Big)^p 
\frac{1}{p(c-1) - 1} (T - t)^{-p(c-1) + 1} \text{,}
\end{equation}
hence, 
\[
n \sim 2 \beta \Big( \frac{\lambda}{c - 1} \Big)^p \frac{1}{p(c-1) - 1} (T - S_n)^{-p(c-1) + 1} \text{.}
\]
So for some $ \tilde{Q}_{\theta}$-a.s. finite positive random variable $C_2$ we have  
\[
(T - S_n)^{-p(c-1) + 1} \geq C_2 n \quad \forall n \text{.}
\]
Then substituting this into \eqref{eq_www2} we get 
\[
sum(t) \leq C_1 + C' \sum_{n = 1}^{ \infty} e^{-C'' C_2 n} \text{,} 
\]
which is bounded $ \tilde{Q}_{\theta}$-a.s. We have thus shown that 
\[
\limsup_{t \to T} E^{\tilde{Q}_{\theta}} \Big( M_{\theta}(t) \big\vert \tilde{\mathcal{G}}_T \Big) =
\limsup_{t \to T}  \Big( spine(t) + sum(t) \Big) < \infty \qquad \tilde{Q}_{\theta} \text{-a.s.}
\]
proving \eqref{eq098} and consequently \eqref{eq099}.

From Lemma \ref{durrett_lem2_} it now follows that for events $A \in \mathcal{F}_T$
\[
Q_{\theta}(A) = \int_A \limsup_{t \to T} M_{\theta}(t) \mathrm{d}P \text{.}
\]
Thus $Q_{\theta}(A) > 0 \Rightarrow P(A) > 0$. Let us consider the event 
$ \big\{ |N_t| \to \infty \text{ as } t \to T \big\} $.
From \eqref{eq_deq} we have $\tilde{Q}_{\theta} \big( n_t \to \infty \text{ as } t \to T \big) = 1$, so 
$Q_{\theta} \big( |N_t| \to \infty \text{ as } t \to T \big) = 1$ and then $P \big( |N_t| \to \infty \text{ as } t \to T \big) > 0$. 
Thus $ P \big( T_{explo} \leq T \big) > 0$, which contradicts the initial assumption \eqref{pexplosion3}. Therefore, 
$P(T_{explo} \leq T) > 0$, $ \forall T > 0$ and hence by Corollary \ref{zeroone}
\[
T_{explo} < \infty \ P \text{-a.s.}
\]
This completes the proof of Theorem \ref{criticalrw}
\end{proof}
\section{The rightmost particle: proof of Theorem \ref{main}}
In this section we consider a branching random walk in the potential $\beta|\cdot|^p$, $\beta > 0$, $p \in [0, 1]$. 
By Theorem \ref{criticalrw} there is no explosion of the population and so we take the time set of the branching process 
to be $[0, \infty)$. That is, in the set-up presented in Section 4 we let $T = \infty$.

Just like with the explosion probability in Section 5, the starting position of the branching process does not affect the behaviour of the 
rightmost particle in Theorem \ref{main}. For example in part a) suppose we know that 
$P^x(\lim_{t \to \infty} t^{-1}R_t = \lambda (\hat{\theta} - \hat{\theta}^{-1}) ) = 1$ for some $x \in \mathbb{Z}$. 
Take some $y \in \mathbb{Z}$. Then a branching process started from $x$ will contain a subtree started from $y$. Hence 
$P^y(\limsup_{t \to \infty} t^{-1}R_t \leq \lambda (\hat{\theta} - \hat{\theta}^{-1}) ) = 1$. Also a branching process started from 
$y$ will contain a subtree started from $x$. Hence $P^y(\liminf_{t \to \infty} t^{-1}R_t \geq \lambda (\hat{\theta} - \hat{\theta}^{-1}) ) = 1$ 
and so $P^y(\lim_{t \to \infty} t^{-1}R_t = \lambda (\hat{\theta} - \hat{\theta}^{-1}) ) = 1$. 
We shall thus take the starting position of the branching process to be $0$ in the forthcoming proof presented in Subsections 6.1 - 6.3. 

Our proof  follows a similar approach as was used for the BBM model in J. Harris and S. Harris in \cite{3}. 
\subsection{Convergence properties of $M_{\theta}$ (under $Q_{\theta}$)}
We let $M_{\theta}$ be the additive martingale as defined in \eqref{additivemart1_} for a given 
parameter $\theta$. Note that since each $M_{ \theta}$ is a positive 
$P$-martingale it must converge $P$-almost surely to a finite limit $M_{\theta}(\infty)$. We are interested in those 
values of $\theta$ for which $M_{\theta}(\infty)$ is strictly positive. The following result deals with this question.
\begin{Theorem}
\label{M_limit} $ $ \newline
\noindent $\underline{ \mathbf{ Case \ A } \ (p = 0) \text{, homogeneous branching} }$: \newline
Recall $ \hat{\theta}$ from \eqref{thetacritical} which solves (uniquely) 
\[
\big( \theta - \frac{1}{\theta} \big) \log \theta - \big( \theta + \frac{1}{\theta} \big) + 2 = \frac{\beta}{\lambda} \quad \text{on } (1, \infty) 
\]
\noindent Consider $ \theta = (\theta^+, \theta^-) $, where $ \theta^+(\cdot) \equiv \theta_0$ 
and $\theta^-(\cdot) \equiv \frac{1}{\theta_0}$ for some constant $\theta_0 > 1$. Then \newline
i) $\theta_0 < \hat{\theta} \ \Rightarrow \ M_{\theta}$ is UI and
$M_{\theta}(\infty) > 0$ a.s. (under $P$). \newline
ii) $\theta_0 > \hat{\theta} \ \Rightarrow \ M_{\theta}(\infty) = 0 \ P$-a.s. \newline
$\underline{ \mathbf{ Case \ B } \ (p \in (0,1)) \text{, inhomogeneous subcritical branching} }$: 
\[
\text{Let} \ \hat{b} = \dfrac{1}{1-p} \text{,} \quad \hat{c} = 
\Big( \dfrac{\beta (1-p)^2}{p} \Big)^{\hat{b}} \text{ as in \eqref{pzero_one}.}   
\]
\noindent Consider $ \theta = (\theta^+, \theta^-)$, where $\theta^-(\cdot) \equiv 1$, and for a given $c > 0$,
\[
\theta^+(s) := \dfrac{c}{\lambda (1-p)} \dfrac{s^{\hat{b}-1}}{(\log (s +2))^{\hat{b}}} \text{ , } s \geq 0 \text{.}
\]
\noindent Then \newline
i) $c < \hat{c} \ \Rightarrow \ M_{\theta}$ is UI and $M_{\theta}(\infty)>0 \ P$-a.s. \newline
ii) $c > \hat{c} \ \Rightarrow \ M_{\theta}(\infty) = 0 \ P$-a.s. \newline
$\underline{ \mathbf{ Case \ C } \ (p = 1) \text{, inhomogeneous near-critical branching} }$:  \newline
Consider $ \theta = (\theta^+, \theta^-)$, where $\theta^-(\cdot) \equiv 1$, and for a given $ \alpha > 0$, 
\[
\theta^+(s) := e^{ \alpha \sqrt{s}} \text{ , } s \geq 0 \text{.}
\] 
\noindent Then \newline
i) $ \alpha < \sqrt{2 \beta} \ \Rightarrow \ M_{\theta}$ is UI and $M_{\theta}(\infty)>0 \ P$-a.s. \newline
ii) $ \alpha > \sqrt{2 \beta} \ \Rightarrow \ M_{\theta}(\infty) = 0 \ P$-a.s.
\end{Theorem}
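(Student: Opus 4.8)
The plan is to prove all three cases with the same spine machinery, the only case‑dependent input being the asymptotic analysis of the single spine particle under $\tilde{Q}_{\theta}$. We take $T=\infty$ and use the spine decomposition (Lemma~\ref{spine_decomposition}), written $E^{\tilde{Q}_{\theta}}(M_{\theta}(t)\mid\tilde{\mathcal{G}}_{\infty})=spine(t)+sum(t)$, together with the single‑particle results of Section~3: under $\tilde{Q}_{\theta}$ the spine is a biased walk $\xi=\xi^{+}-\xi^{-}$ with $\xi^{\pm}\overset{d}{=}IPP(\lambda\theta^{\pm})$, and the fission process along it is Cox with rate $2\beta|\xi_{t}|^{p}$. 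The first step in each case is to verify the hypotheses of Propositions~\ref{poisson_c} and~\ref{poisson_d} for the prescribed $\theta^{+}$ (and for the auxiliary functions $f(s)=\log\theta^{+}(s)$ and, via Proposition~\ref{poisson_c} applied to $\xi$, also $|\xi_s|^p$), and thereby obtain the leading‑order $\tilde{Q}_{\theta}$‑a.s.\ behaviour of the exponent defining $spine(t)$, namely $\int_{0}^{t}\log\theta^{+}\,\mathrm{d}\xi^{+}_{s}+\int_{0}^{t}\log\theta^{-}\,\mathrm{d}\xi^{-}_{s}+\lambda\int_{0}^{t}(2-\theta^{+}-\theta^{-})\,\mathrm{d}s-\beta\int_{0}^{t}|\xi_{s}|^{p}\,\mathrm{d}s$. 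The calibration in the statement is designed so that, to leading order, the drift gain $\int\log\theta^{+}\,\mathrm{d}\xi^{+}$ exactly balances the breeding cost $\beta\int|\xi_{s}|^{p}$ against $\lambda\int(2-\theta^{+}-\theta^{-})$ at the critical parameter ($\hat{\theta}$, $\hat{c}$, $\sqrt{2\beta}$ respectively): strictly below it the exponent $\to-\infty$ and strictly above it $\to+\infty$, in both regimes at a controlled rate. For $p=0$ this is elementary, using that $g(\theta):=(\theta-\theta^{-1})\log\theta-(\theta+\theta^{-1})+2$ is strictly increasing on $(1,\infty)$ with $g(1)=0$ (so $g(\theta_0)\lessgtr\beta/\lambda$ according as $\theta_0\lessgtr\hat\theta$ in~\eqref{thetacritical}); for $p\in(0,1)$ and $p=1$ it reduces to the kind of integral estimate already performed in the proof of Theorem~\ref{criticalrw}(b) (e.g.\ $\int_{0}^{t}\sqrt{s}\,e^{\alpha\sqrt{s}}\,\mathrm{d}s\sim 2\alpha^{-1}t\,e^{\alpha\sqrt{t}}$, $\int_0^t e^{\alpha\sqrt s}\,\mathrm{d}s\sim 2\alpha^{-1}\sqrt t\,e^{\alpha\sqrt t}$).

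For part (ii), where the parameter is supercritical and the $spine(t)$‑exponent $\to+\infty$ $\tilde{Q}_{\theta}$‑a.s., I would note that the contribution of $node_{t}(\xi)$ to the sum~\eqref{additivemart1_} is \emph{exactly} $spine(t)$, so $M_{\theta}(t)\ge spine(t)$ and hence $\limsup_{t\to\infty}M_{\theta}(t)=\infty$, first $\tilde{Q}_{\theta}$‑a.s.\ and therefore (since this is an $\mathcal{F}_{\infty}$‑event) also $Q_{\theta}$‑a.s. Applying the Lebesgue decomposition of Lemma~\ref{durrett_lem2_} with $A=\Omega$ gives $\int_{\Omega}\limsup_{t}M_{\theta}(t)\,\mathrm{d}P = 1 - Q_{\theta}(\limsup_{t}M_{\theta}=\infty)=0$; since $M_{\theta}$ is a nonnegative $P$‑martingale it converges $P$‑a.s., so $\limsup_{t}M_{\theta}(t)=M_{\theta}(\infty)\ge 0$, and the integral vanishing forces $M_{\theta}(\infty)=0$ $P$‑a.s.

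For part (i), where the parameter is subcritical and the $spine(t)$‑exponent $\to-\infty$ $\tilde{Q}_{\theta}$‑a.s., I would first show $\limsup_{t\to\infty}\bigl(spine(t)+sum(t)\bigr)<\infty$ $\tilde{Q}_{\theta}$‑a.s.: the exponent tending to $-\infty$ gives $spine(t)\to 0$, and since $sum(t)=\sum_{u<node_{t}(\xi)}spine(S_{u})$ is nondecreasing it suffices that $\sum_{u\in\xi}spine(S_{u})<\infty$; this follows because $n_{t}\sim\int_{0}^{t}2\beta|\xi_{s}|^{p}\,\mathrm{d}s$ $\tilde{Q}_{\theta}$‑a.s.\ (the Cox property), which converts the super‑exponential decay of $spine(\cdot)$ in $t$ into geometric decay of $spine(S_{n})$ in $n$, precisely as in the $sum$‑estimate of the proof of Theorem~\ref{criticalrw}(b). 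Then, by Fatou applied to the $\tilde{\mathcal{G}}_{\infty}$‑conditional expectation exactly as in that proof, $\liminf_{t}M_{\theta}(t)<\infty$ $\tilde{Q}_{\theta}$‑a.s.\ and hence $Q_{\theta}$‑a.s.; since $1/M_{\theta}(t)=\mathrm{d}P/\mathrm{d}Q_{\theta}|_{\mathcal{F}_{t}}$ is a positive $Q_{\theta}$‑supermartingale it converges, so $\limsup_{t}M_{\theta}(t)=\liminf_{t}M_{\theta}(t)<\infty$ $Q_{\theta}$‑a.s. Lemma~\ref{durrett_lem2_} then yields $Q_{\theta}\ll P$ on $\mathcal{F}_{\infty}$ with $\mathrm{d}Q_{\theta}/\mathrm{d}P=M_{\theta}(\infty)$, whence $E^{P}M_{\theta}(\infty)=1$; together with $E^{P}M_{\theta}(t)=1$ and $P$‑a.s.\ convergence, Scheff\'e's lemma gives $L^{1}(P)$‑convergence, i.e.\ $M_{\theta}$ is uniformly integrable. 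Finally $E^{P}M_{\theta}(\infty)=1$ forces $P(M_{\theta}(\infty)>0)>0$, and a $0$--$1$ law upgrades this to $P(M_{\theta}(\infty)>0)=1$: in Case~A the law of $M_{\theta}(\infty)$ is translation invariant, so applying the branching property at the first fission gives $P(M_{\theta}(\infty)=0)=P(M_{\theta}(\infty)=0)^{2}$; in Cases~B and~C one argues similarly, now also tracking the time‑shift of $\theta^{+}$ and the starting position, as in \cite{2,3}.

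I expect the main obstacle to be the very first step in each case: pinning down the $\tilde{Q}_{\theta}$‑a.s.\ leading‑order asymptotics of the $spine(t)$‑exponent, which requires checking hypothesis~(ii) of Proposition~\ref{poisson_d} for the prescribed $\theta^{+}$ and auxiliary functions, and then producing \emph{matching} upper and lower bounds with explicit, $t$‑independent constants, so that the strictly sub‑ and super‑critical regimes genuinely separate (and the critical condition comes out as $\hat\theta$, $\hat c$, $\sqrt{2\beta}$ rather than some mis‑weighted constant). A secondary subtlety is the $0$--$1$ law for positivity in the genuinely inhomogeneous Cases~B and~C, where the lack of translation invariance means the branching‑property recursion involves the shifted potential $\theta^+(\,\cdot+s)$.
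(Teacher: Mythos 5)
Your proposal follows essentially the same route as the paper: the Lebesgue decomposition (Lemma~\ref{durrett_lem2_}) to reduce both (i) and (ii) to the $Q_{\theta}$-a.s.\ behaviour of $\limsup_t M_{\theta}(t)$; the spine decomposition and the Section~3 single-particle asymptotics of $\xi^\pm$ and of the Cox birth process $n_t$ to control $spine(t)+sum(t)$; the Fatou/positive-supermartingale step to pass from boundedness of the conditional expectation to boundedness of $M_\theta$ itself; Scheff\'e for UI; and the lower bound $M_{\theta}(t)\geq spine(t)$ for the divergence in parts~(ii). The calibration of the critical parameters via the matched drift/breeding balance and the geometric-decay conversion of $spine(S_n)$ through $n_t\sim\int_0^t 2\beta|\xi_s|^p\,\diffd s$ are exactly what the paper does.

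Two remarks on the positivity step. First, your Case~A argument appeals to translation invariance of the law of $M_\theta(\infty)$ to get $q(0)=q(0)^2$; the paper instead proves a general product-martingale $0$--$1$ law (Lemma~\ref{zero_one_law}) whose mechanism is the recurrence of the unbiased spine walk under $\tilde P$, and then derives the martingale property of $\prod_{u\in N_t}q(X_u(t))$ from the branching Markov property. The paper's route is the one that has any hope of covering Cases~B and~C, since the martingale $M_\theta$ is \emph{not} translation invariant there (the potential $\beta|x|^p$ is genuinely space-dependent and $\theta^+$ is time-dependent). Second, you are right to flag the time-shift subtlety in Cases~B and~C: after conditioning on $\F_t$, the subtree at $u$ carries the shifted parameter $\theta^+(t+\cdot)$, so the identity $P^x(M_\theta(\infty)=0\mid\F_t)=\prod_{u\in N_t}q(X_u(t))$ with a $t$-independent $q$ requires an extra argument (typically: the event $\{M_\theta(\infty)=0\}$ is a tail event and $\theta^+(t+s)/\theta^+(s)\to1$ as $s\to\infty$ makes the shift asymptotically negligible). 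The paper's written proof applies Lemma~\ref{zero_one_law} directly without addressing this, so you are actually more careful than the source here. In your final write-up you should supply this justification rather than leaving it as ``one argues similarly''; as it stands, that step is a gap in the proposal (and, arguably, in the paper), even though the overall strategy is sound and correct.
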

The importance of this Theorem comes from the fact that if $M_{\theta}$ is 
$P$-uniformly integrable and $M_{\theta}(\infty) > 0 \ P$-a.s. then, as it follows from Lemma \ref{durrett_lem2_}, 
the measures $P$ and $Q_{\theta}$ are equivalent on $ \mathcal{F}_{\infty}$. Since under $\tilde{Q}_{\theta}$ the spine process satisfies 
\[
\frac{\xi_t}{\int_0^t \lambda (\theta^+(s) - \theta^-(s)) \mathrm{d}s} \to 1 \text{ a.s. as } t \to \infty 
\]
it would then follow that under $P$ there is a particle with such asymptotic behaviour 
too. That would give the lower bound on the rightmost particle:
\[
\liminf_{t \to \infty} \frac{R_t}{\int_0^t \lambda ( \theta^+(s) - \theta^-(s)) \mathrm{d}s} \geq 1  \text{,} 
\]
which we can then optimise over suitable $ \theta^+$ and $ \theta^-$.

The upper bound on the rightmost particle needs a slightly different approach, 
which we present in the last subsection. 
\begin{Remark}
\label{thetaremark}
Let us note that the only important feature of $ \theta^+( \cdot)$ in cases $ \mathbf{B}$ 
and $ \mathbf{C}$ is its asymptotic growth. By this we mean that we have freedom 
in defining $ \theta( \cdot)$ as long as we keep
\[
\theta^+(t) \sim \frac{c}{ \lambda (1 - p)} \frac{t^{b-1}}{(\log t)^b} \text{ as } t \to \infty 
\text{ in Case } \mathbf{A}
\]
and 
\[
\log \theta^+(t) \sim \alpha \sqrt{t} \text{ as } t \to \infty \text{ in Case } \mathbf{B} \text{.}
\]
\end{Remark}
\begin{Remark}
\label{iremark}
Parts A ii), B ii) and C ii) of Theorem \ref{M_limit} will not be used in the proof of our main result, Theorem \ref{main}. 
We included them to better illustrate the behaviour of martingales $M_{\theta}$.
\end{Remark}
Recall Lemma \ref{durrett_lem2_}, which says that for events $A \in \mathcal{F}_{\infty}$
\begin{equation}
\label{durrett_lem21}
Q_{\theta} \big( A \big) = \int_A \limsup_{t \to \infty} M_{\theta}(t) \mathrm{d}P + 
Q_{\theta} \big( A \cap \{ \limsup_{t \to \infty} M_{\theta}(t) = \infty \} \big)
\end{equation}
Immediate consequences of this (after taking $A = \Omega$) are:

$\mathbf{1)} \  Q_{\theta}(\limsup_{t \to \infty} M_{\theta}(t) = \infty) = 1 \ \Leftrightarrow \ \limsup_{t \to \infty} M_{\theta}(t) = 0 \ P$-a.s. 
So to prove parts A ii), B ii) and C ii) of Theorem \ref{M_limit} we need to show that 
$ \limsup_{t \to \infty}M_{\theta}(t) = \infty \ Q_{\theta}$-a.s.

$\mathbf{2)} \ Q_{\theta} ( \limsup_{t \to \infty}M_{\theta}(t) < \infty ) = 1 \Leftrightarrow E M_{\theta}(\infty) = 1$ 
in which case $P(M_{\theta}(\infty) > 0) > 0$ and $M_{\theta}$ is $L^1$-convergent w.r.t $P$ as 
it follows from Scheffe's Lemma. Thus $M_{\theta}$ is $P$-uniformly integrable. So to prove the uniform integrability in parts A i), B i) and C i) of 
Theorem \ref{M_limit} we need to show that $ \limsup_{t \to \infty}M_{\theta}(t) < \infty \ Q_{\theta}$-a.s.

The fact that $P(M_{\theta}(\infty) > 0) = 1$ (in parts A i), B i) and C i)) requires additionally 
a certain zero-one law, which we shall give at the end of this subsection. 
\begin{proof}[Proof of Theorem \ref{M_limit}: uniform integrability in A i), B i),  C i)]
We start with proving that for the given values of $ \theta$ in A i), B i) and C i) 
$M_{\theta}$ is UI. As we just said above, it is sufficient to prove that 
\begin{equation}
\label{important}
\limsup_{t \to \infty} M_{\theta}(t) < \infty \ Q_{\theta} \text{-a.s.} 
\end{equation}
for the given paths $\theta$. We have already seen how to do this using the spine 
decomposition in Section 5. Just as before it is sufficient for us to check that
\begin{equation}
\label{eq_3a}
\limsup_{t \to \infty} E^{\tilde{Q}_{\theta}}(M_{\theta}(t) \vert \tilde{\mathcal{G}}_{\infty}) = 
\limsup_{t \to \infty} \big( spine(t) + sum(t) \big) < \infty \ \tilde{Q}_{\theta} \text{-a.s.} 
\end{equation}
Let us outline the main steps of proving \eqref{eq_3a} in cases A, B and C.

$\underline{ \mathbf{ Case \ A } \ (p = 0) \text{, homogeneous branching} }$: \newline
We note that under $ \tilde{Q}_{\theta}$, $(\xi_t^+)_{t \geq 0} \stackrel{d}{=} PP(\lambda \theta_0)$ and 
$(\xi^-_t)_{t \geq 0} \stackrel{d}{=} PP(\frac{\lambda}{\theta_0})$. Hence 
\[
\frac{\xi_t^+}{t} \to \lambda \theta_0 \text{ and } \frac{\xi_t^-}{t} \to \frac{\lambda}{\theta_0}
\quad \tilde{Q}_{\theta} \text{-a.s.}
\]
Then using the above convergence results we wish to show that there exist some positive constant $C''$ and a $ \tilde{Q}_{\theta}$-a.s. 
finite time $T'$ such that $ \forall t > T'$
\begin{equation}
\label{eq_078}
spine(t) \leq  e^{- C'' t} \text{.}
\end{equation}
We observe that for any $\epsilon>0$ there exists a $ \tilde{Q}_{\theta}$-a.s. finite time $T_{\epsilon}$ such that $\forall t > T_{\epsilon}$ 
$(1 - \epsilon) \lambda \theta_0 t \leq \xi^+_t \leq (1 + \epsilon) \lambda \theta_0 t $ and 
$(1 - \epsilon) \frac{\lambda}{\theta_0} t \leq \xi^-_t \leq (1 + \epsilon) \frac{\lambda}{\theta_0} t$. Thus for $t > T_{\epsilon}$
\begin{align*}
spine(t) &\leq \exp \Big( \lambda (1 + \epsilon) \theta_0 \log
\theta_0 t + \lambda (1 - \epsilon) \frac{1}{\theta_0} \log \big( \frac{1}{\theta_0} \big) t  + \lambda \big( 2 - \theta_0 - \frac{1}{\theta_0} \big) t - \beta t \Big) \\
&= \exp \Big( \big( \lambda \big[ g(\theta_0) + \epsilon 
\big(\theta_0 + \dfrac{1}{\theta_0}\big) \log \theta_0   
\big] - \beta \big)t \Big) \text{,}
\end{align*} 
where 
\begin{equation}
\label{g_fun}
g(\theta) = \big( \theta - \frac{1}{\theta} \big) \log \theta - \big( \theta + \frac{1}{\theta} \big) + 2 \text{, } \theta \in [1, \infty)
\end{equation}
is an increasing function such that $g(\hat{\theta}) = \frac{\beta}{\lambda}$ (see the definition of $\hat{\theta}$). Then since $\theta_0 < \hat{\theta}$ it 
follows that for $\epsilon$ small enough
\[
\lambda \Big( g(\theta_0) + \epsilon \big( \theta_0 + \frac{1}{\theta_0} \big) \log \theta_0 \Big) - \beta < 0 \text{.}
\]
We thus take $T' = T_{\epsilon}$ for such an $\epsilon$ and 
$C'' = - \lambda \Big( g(\theta_0) + \epsilon \big( \theta_0 + \frac{1}{\theta_0} \big) \log \theta_0 \Big) - \beta$ 
to obtain \eqref{eq_078}.

Then we have 
\begin{align*}
sum(t) & = \sum_{u < node_t(\xi)} spine(S_u) \\
& \leq \Big( \sum_{u < node_t(\xi), \ S_u \leq T'} spine(S_u) \Big) + 
\Big( \sum_{u < node_t(\xi), \ S_u > T'} e^{- C'' S_u} \Big) \text{,}
\end{align*}
where the first sum, call it $C_1$, is $ \tilde{Q}_{\theta}$-a.s. bounded 
since it only counts births up to time $T'$. Thus 
\begin{equation}
\label{eq_www3}
sum(t) \leq \ C_1 + \sum_{n=1}^{\infty} e^{ -C '' S_n } \text{,}
\end{equation}
where $S_n$ is the time of the $n^{th}$ birth on the spine.

The birth process along the spine $(n_t)_{t \in [0, \infty)}$ is a Poisson process with rate $2 \beta $. 
Therefore $t^{-1}n_t \to 2 \beta \ \tilde{Q}_{\theta} $-a.s. as $t \to \infty$ and hence
$n^{-1}S_n \to (2 \beta)^{-1} \ \tilde{Q}_{\theta}$-a.s. as $ n \to \infty$. 
So for some $ \tilde{Q}_{\theta}$-a.s. finite positive random variable $C_2$ we have 
$S_n \geq C_2 n \quad \forall n $. Then substituting this into \eqref{eq_www3} we get 
\[
sum(t) \leq C_1 + \sum_{n = 1}^{ \infty} e^{-C'' C_2 n} < \infty \ \tilde{Q}_{\theta} \text{-a.s.,} 
\]
which gives \eqref{eq_3a}.

$\underline{ \mathbf{ Case \ B } \ (p \in (0,1)) \text{, inhomogeneous subcritical branching} }$: \newline
From Proposition \ref{prop_ch_meas} under $ \tilde{Q}_{\theta}$ the process $(\xi^+_t)_{t \in [0, \infty)}$ is a time-inhomogeneous 
Poisson process with jump rate $ \lambda \theta^+(t)$ and $(\xi^-_t)_{t \in [0, \infty)}$ is a Poisson process of rate $ \lambda$. 
Then from Propositions \ref{poisson_c} and \ref{poisson_d} we find that, $\tilde{Q}_{\theta}$-a.s.,
\[
\frac{\xi^+_t}{\int_0^t \lambda \theta^+(s) \mathrm{d}s} \to 1 \text{ ,  } \ 
\frac{\xi^-_t}{\lambda t} \to 1 \text{ , } \ 
\frac{\int_0^t \log \theta^+(s) \mathrm{d} \xi_s^+}{\int_0^t \log \theta^+(s)
\ \lambda \theta^+(s) \mathrm{d}s} \to 1 \text{.}
\]
It can then be checked in a similar way as before that there exist some $ \tilde{Q}_{\theta}$-a.s. finite positive 
random variables $C'$, $C''$ and  $T'$ such that, $ \forall t > T'$, 
\[
spine(t) \leq C' \exp \Big( - C'' \int_0^t \frac{s^{\hat{b}p}}{(\log(s+2))^{\hat{b}p}} \mathrm{d}s \Big) \text{.}
\]
For the sum term of the spine decomposition we have when $t > T'$
\[
sum(t) \leq \sum_{\buildrel  u < node_t(\xi), \over{S_u \leq T'}} spine(S_u) 
+ \sum_{\buildrel u < node_t(\xi), \over{S_u > T'}} C' \exp \Big( - C'' \int_0^{S_u} \frac{s^{\hat{b}p}}{(\log(s+2))^{\hat{b}p}} \mathrm{d}s \Big)
\]
The first sum is a $ \tilde{Q}_{\theta}$-a.s. finite random variable which doesn't depend on $t$, and which we call $C_1$. Then
\begin{equation}
\label{eq_www4}
sum(t) \leq \ C_1 + C' \sum_{n=1}^{\infty} \exp \Big( -C'' \int_0^{S_n} \frac{s^{\hat{b}p}}{(\log(s+2))^{\hat{b}p}} \mathrm{d}s \Big) \text{,}
\end{equation}
where $S_n$ is the time of the $n^{th}$ birth on the spine.

The birth process along the spine $(n_t)_{t \in [0, \infty)}$ conditional on the path of the spine 
is time-inhomogeneous Poisson process (or Cox process) with jump rate $2 \beta |\xi_t|^p$ at 
time $t$. Thus, we find 
\[
n_t \sim 2 \beta \int_0^t |\xi_s|^p \mathrm{d}s \sim 2 \beta \Big( \frac{c}{\hat{b}(1-p)} \Big)^p 
\int_0^t  \frac{s^{\hat{b}p}}{(\log(s+2))^{\hat{b}p}} \mathrm{d}s 
\quad \tilde{Q}_{\theta} \text{-a.s. as } t \to \infty \text{.}
\]
So for some $ \tilde{Q}_{\theta}$-a.s. finite positive random variable $C_2$ we have  
\[
\int_0^{S_n} \frac{s^{\hat{b}p}}{(\log(s+2))^{\hat{b}p}} \mathrm{d}s \geq C_2 n \quad \forall n \text{.}
\]
Then substituting this into \eqref{eq_www4} we verify that \eqref{eq_3a} again holds.

$\underline{ \mathbf{ Case \ C } \ (p = 1) \text{, inhomogeneous near-critical branching} }$: \newline
As in the previous case, under $ \tilde{Q}_{\theta}$ the process $(\xi^+_t)_{t \in [0, \infty)}$ is a time-inhomogeneous 
Poisson process with jump rate $ \lambda \theta^+(t)$ and $(\xi^-_t)_{t \in [0, \infty)}$ is a Poisson process of rate $ \lambda$. 
Then $\tilde{Q}_{\theta}$-a.s. we have 
\[
\frac{\xi^+_t}{\int_0^t \lambda \theta^+(s) \mathrm{d}s} \to 1 \text{ ,  } \ 
\frac{\xi^-_t}{\lambda t} \to 1 \text{ , } \ 
\frac{\int_0^t \log \theta^+(s) \mathrm{d} \xi_s^+}{\int_0^t \log \theta^+(s)
\ \lambda \theta^+(s) \mathrm{d}s} \to 1 \text{.}
\]
One can check that there exist some $ \tilde{Q}_{\theta}$-a.s. finite positive random variables $C'$, $C''$ and $T'$ such that, $ \forall t > T'$,
\[
spine(t) \leq C' \exp \Big( - C'' \int_0^t \sqrt{s} e^{\alpha \sqrt{s}} \mathrm{d}s \Big) \text{.}
\]
Then for $t > T'$
\begin{equation}
\label{eq_www5}
sum(t)  \leq C_1 + C' \sum_{n=1}^{\infty} \exp \Big( -C'' \int_0^{S_n}  \sqrt{s} e^{\alpha \sqrt{s}} \mathrm{d}s \Big) \text{,}
\end{equation}
where $C_1 < \infty$ and $S_n$ is the time of the $n^{th}$ birth on the spine.
The birth process along the spine $(n_t)_{t \in [0, \infty)}$ then satisfies
\[
n_t \sim \int_0^t 2 \beta |\xi_s| \mathrm{d}s 
\sim \frac{4 \beta \lambda}{\alpha} \int_0^t \sqrt{s} e^{\alpha \sqrt{s}} \mathrm{d}s
\quad \tilde{Q}_{\theta} \text{-a.s. as } t \to \infty \text{.}
\]
So for some $ \tilde{Q}_{\theta}$-a.s. finite positive random variable $C_2$ we have  
\[
\int_0^{S_n} \sqrt{s} e^{\alpha \sqrt{s}} \mathrm{d}s \geq C_2 n \quad \forall n \text{.}
\]
Then substituting this into \eqref{eq_www5} we again find that \eqref{eq_3a} holds. 

Thus we have completed the proof of uniform integrability 
and the fact that $P(M_{\theta}(\infty) > 0) > 0$ in Theorem \ref{M_limit}. 
\end{proof}
\begin{proof}[Proof of Theorem \ref{M_limit}: parts A ii), B ii), C ii)]
Since one of the particles at time $t$ is the spine, we have
\begin{align*}
M_{\theta}(t) \ \geq \ \exp \Big( & \int_0^t \log(\theta^+(s)) \mathrm{d}
\xi^+_s \ + \ \int_0^t \log(\theta^-(s)) \mathrm{d} \xi^-_s \\ + & \ \lambda
\int_0^t (2 - \theta^+(s) - \theta^-(s)) \mathrm{d}s \ - \ \beta \int_0^t
\vert \xi_s \vert^p \mathrm{d}s \Big) \ = \ spine(t) \text{.}
\end{align*}
For the paths $ \theta$ in parts ii) of Theorem \ref{M_limit} one can check
(following the same analysis as in the proof of parts i) of the
Theorem) that $spine(t) \to \infty \ $
$\tilde{Q}_{\theta}$-a.s. Thus 
\[
\limsup_{t \to \infty} M_{\theta}(t) = \infty \ \tilde{Q}_{\theta} \text{-a.s.} 
\]
and so also
$Q_{\theta}$-a.s. Recalling \eqref{durrett_lem21} we see that
$M_{\theta}(\infty) = 0 \ P$-a.s. for the proposed choices of 
$ \theta$.
\end{proof}
It remains to show that in Theorem \ref{M_limit} $P(M_{\theta}(\infty) > 0) = 1$ when $M_{\theta}$ is UI. 
The following 0-1 law will do the job. 
\begin{Lemma}
\label{zero_one_law}
Let $q : \mathbb{Z} \to [0,1]$ be such that $M_t := \prod_{u \in N_t}
q(X_u(t))$ is a $P$-martingale (usually referred to as a product martingale). Then $q(x) \equiv q \in \{ 0,1 \}$.
\end{Lemma}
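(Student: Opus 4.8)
The plan is to exploit the branching (self-similarity) structure of the process together with the martingale property to derive a functional equation for $q$ that forces $q$ to be constant, and then to rule out any constant value strictly between $0$ and $1$. First I would use the branching property at a small time. Let me condition on $\mathcal{F}_h$ for small $h > 0$: starting from a single particle at $x$, by the branching property the product $\prod_{u \in N_t} q(X_u(t))$ evaluated at time $t = h$ has conditional expectation (given that the initial particle is still at some position and has not yet branched, versus has branched into two subtrees) expressible through the one-particle transition kernel of the random walk and the branching rate $\beta|x|^p$. Writing $F(x) := E^x\big(\prod_{u\in N_h} q(X_u(h))\big)$, the martingale property at time $0$ gives $F(x) = q(x)$ for every $x$ and every $h>0$; differentiating (or taking the generator) in $h$ at $h=0$ yields the pointwise identity
\[
\lambda\big(q(x+1) + q(x-1) - 2q(x)\big) + \beta|x|^p\big(q(x)^2 - q(x)\big) = 0 \qquad \forall x \in \mathbb{Z}.
\]
(One does not strictly need the differential form: the integral identity $F(x)=q(x)$ together with positivity of $\lambda$ already encodes that $q$ is a bounded $[0,1]$-valued solution of this recursion.)

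Next I would analyze this recursion. Rearranging, $q(x+1) - q(x) = q(x) - q(x-1) + \tfrac{\beta}{\lambda}|x|^p\, q(x)\big(q(x) - 1\big)$. Since $q$ takes values in $[0,1]$, the term $q(x)(q(x)-1) \le 0$, so the increments $\Delta(x) := q(x+1) - q(x)$ satisfy $\Delta(x) \le \Delta(x-1)$, i.e. $q$ is concave on $\mathbb{Z}$. A bounded concave function on $\mathbb{Z}$ must be monotone for $|x|$ large in each direction, and a bounded concave function on all of $\mathbb{Z}$ is in fact affine, hence (being bounded) constant; so $q(x) \equiv q$ for some $q \in [0,1]$. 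Feeding $q(x)\equiv q$ back into the recursion, the discrete Laplacian term vanishes identically and we are left with $\beta |x|^p (q^2 - q) = 0$ for all $x$; taking any $x \neq 0$ (and $p$ arbitrary, noting $\beta>0$) forces $q^2 = q$, hence $q \in \{0,1\}$. (If one prefers to avoid the generator computation, the same conclusion follows directly: $M_t = \prod_{u\in N_t} q(X_u(t)) \in [0,1]$ is a bounded martingale, so it converges $P$-a.s. and in $L^1$; on the event $\{|N_t|\to\infty\}$, which has positive probability when $p>0$ or can be arranged for $p=0$ too, one argues $M_\infty \in \{0,1\}$ and then propagates back via $q(x) = E^x M_\infty$.)

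The main obstacle I anticipate is handling the case $x = 0$ cleanly and making the concavity argument rigorous at the boundary between the two half-lines — a concave lattice function need not be globally affine a priori if one only knows concavity, but boundedness on $\mathbb{Z}$ does pin it down, so this is a genuine but routine point. A secondary subtlety is justifying that $M_t \in [0,1]$ (clear, since each factor $q(X_u(t)) \in [0,1]$) and that the martingale identity $q(x) = E^x(\prod_{u\in N_h} q(X_u(h)))$ holds for \emph{all} $h$, not just in a differential sense — this is immediate from $E^x M_h = M_0 = q(x)$. Once the recursion and the $[0,1]$-range are in hand, the rest is elementary: concavity $\Rightarrow$ constant $\Rightarrow$ $q \in \{0,1\}$.
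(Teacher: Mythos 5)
Your proposal takes a genuinely different route from the paper. The paper's proof works directly with the spine: since $M_t$ is a martingale and the spine particle is one of the terms in the product, one gets $q(x) = E^x M_t \leq \tilde E^x\, q(\xi_t)$, so $q(\xi_t)$ is a bounded $\tilde P$-submartingale and therefore converges a.s.; recurrence of the continuous-time random walk $\xi$ then forces $q$ to be constant, and the constant is excluded from $(0,1)$ because $q(0) = E M_\infty$ with $M_\infty = \lim q(0)^{|N_t|} = 0$ once $|N_t|\to\infty$. Your approach avoids the spine entirely and instead extracts the discrete PDE $\lambda\big(q(x+1)+q(x-1)-2q(x)\big) + \beta|x|^p\big(q(x)^2-q(x)\big)=0$ from the martingale property and argues via convexity of $q$. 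This is a clean and essentially correct alternative, with two caveats worth flagging.

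First, a sign slip: rearranging the equation gives
\[
q(x+1) - q(x) \;=\; \big(q(x)-q(x-1)\big) \;+\; \tfrac{\beta}{\lambda}\,|x|^p\, q(x)\big(1 - q(x)\big),
\]
and since $q(x)\big(1-q(x)\big)\geq 0$ the increments $\Delta(x)=q(x+1)-q(x)$ are \emph{non-decreasing}, so $q$ is \emph{convex}, not concave as you wrote. Fortunately this does not damage the argument: a bounded convex function on $\mathbb{Z}$ is just as surely constant as a bounded concave one (the non-decreasing sequence $\Delta(x)$ must have $\sup_x\Delta(x)\leq 0$ to keep $q$ bounded above as $x\to+\infty$, and $\inf_x\Delta(x)\geq 0$ to keep it bounded above as $x\to-\infty$, hence $\Delta\equiv 0$). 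Substituting $q(x)\equiv q$ back at any $x\neq 0$ gives $q^2=q$, so $q\in\{0,1\}$, exactly as you conclude.

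Second, the step from ``$E^x M_h = q(x)$ for all $h>0$'' to the pointwise generator identity deserves slightly more care than ``differentiate at $h=0$'': you need to control the $o(h)$ contribution from two-or-more events in $[0,h]$, which is routine for a jump process with locally finite rates but should be stated (the rates $\lambda$ and $\beta|x|^p$ are finite for each fixed $x$, and positions stay bounded over $[0,h]$ a.s., so multi-event probabilities are $O(h^2)$). Also, your closing remark offering an alternative via $M_\infty\in\{0,1\}$ on $\{|N_t|\to\infty\}$ does not by itself yield that $q$ is constant in $x$ — that is precisely the part for which the paper invokes recurrence of the spine, and which your convexity argument replaces — so as written that fallback is incomplete. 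On balance, your main line of reasoning (generator equation, then convexity, then $q^2=q$) is a correct and arguably more self-contained proof, at the modest cost of having to justify the generator computation where the paper can simply cite bounded-submartingale convergence and recurrence.
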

\begin{proof}[Proof of Lemma \ref{zero_one_law}]
Since $M_t$ is a martingale and one of the particles alive at time $t$
is the spine we have
\[
q(x) = E^x M_t = \tilde{E}^x M_t \leq \tilde{E}^x q(\xi_t) \text{.}
\]
So $q(\xi_t)$ is a positive $\tilde{P}$-submartingale. Since it is bounded it converges
$\tilde{P}$-a.s. to some limit $q_{\infty}$. We also know that under $ \tilde{P}$, $ (\xi_t)_{t \geq 0}$ 
is a continuous-time random walk. Recurrence of $(\xi_t)_{t \geq 0}$ implies that $q_{\infty} \equiv q(0)$ and that $q(x)$ is constant in $x$. 

Now suppose for contradiction that $q(0) \in (0,1)$. Then 
\[
M_t = \prod_{u \in N_t} q(X_u(t)) = q(0)^{|N_t|} \to 0
\]
because $|N_t| \to \infty$. Since $M$ is bounded it is uniformly integrable, so $q(0) = E M_{\infty} = 0$, which is a contradiction. 
So $q(0) \notin (0, 1)$ and thus $q(0) \in \{ 0,1 \}$. 
\end{proof}
\begin{proof}[Proof of Theorem \ref{M_limit}: positivity of limits in A i), B i), C i)]
We apply Lemma \ref{zero_one_law} to $q(x) = P^x (M_{\theta}(\infty) = 0)$. By the tower propery of conditional 
expectations and the branching Markov property we have 
\[
q(x) = E^x \Big( P^x \big(M_{\theta}(\infty) = 0 \big\vert \mathcal{F}_t
\big) \Big) = E^x \Big( \prod_{u \in N_t} q \big( X_u(t) \big) \Big)
\] 
whence $\prod_{u \in N_t} q(X_u(t))$ is a $P$-martingale. Also $E (M_{\theta}(\infty)) = M_{\theta}(0) = 1 > 0$. Therefore
$P(M_{\theta}(\infty) = 0) \neq 1$. So by Lemma \ref{zero_one_law} $P(M_{\theta}(\infty) = 0) = 0 $.
\end{proof}
\subsection{Lower bound on the rightmost particle} 
\begin{Proposition}
\label{lowerbound}
Let $\hat{\theta}$, $\hat{b}$ and $\hat{c}$ be as defined in Theorem \ref{main}. Then

$\underline{\mathbf{Case \ A } \ (p = 0)}$:
\[
\liminf_{t \to \infty} \frac{R_t}{t} \geq \lambda(\hat{\theta} - \frac{1}{\hat{\theta}}) \ P \text{-a.s.}
\]

$\underline{\mathbf{Case \ B } \ (p \in (0,1))}$:
\[
\liminf_{t \to \infty} \Big( \dfrac{\log t}{t} \Big)^{\hat{b}} R_t
\geq \hat{c} \ P \text{-a.s.}
\]

$\underline{\mathbf{Case \ C } \ (p = 1)}$:
\[
\liminf_{t \to \infty} \frac{\log R_t}{\sqrt{t}} \geq \sqrt{2 \beta} \ P \text{-a.s.}
\]
\end{Proposition}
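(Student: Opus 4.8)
The plan is to read the lower bound off from the behaviour of the spine under the change of measure, exactly along the lines sketched after Theorem~\ref{M_limit}. In each of the three cases we take $\theta=(\theta^+,\theta^-)$ precisely as in the corresponding part~i) of Theorem~\ref{M_limit}, for which we have already shown that $M_\theta$ is $P$-uniformly integrable with $M_\theta(\infty)>0$ $P$-a.s.; by Lemma~\ref{durrett_lem2_} this makes $P$ and $Q_\theta$ equivalent measures on $\mathcal{F}_\infty$. Write $a_\theta(t):=\int_0^t\lambda\big(\theta^+(s)-\theta^-(s)\big)\,\mathrm{d}s$. First I would show that $\tilde Q_\theta$-a.s.\ the spine satisfies $\xi_t/a_\theta(t)\to1$: by Proposition~\ref{BRW_under_Q}, under $\tilde Q_\theta$ the jump processes $\xi^\pm$ are time-inhomogeneous Poisson processes of rates $\lambda\theta^\pm(t)$, so Proposition~\ref{poisson_c} gives $\xi^\pm_t/\int_0^t\lambda\theta^\pm(s)\,\mathrm{d}s\to1$ (the relevant integrals tend to $\infty$ in all three cases), and subtracting yields $\xi_t/a_\theta(t)\to1$ (in Cases~B and~C the downward contribution is of strictly smaller order than the upward one, which only makes the conclusion easier).

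Next I would transfer this to $P$. The event $A$ that there exists an infinite line of descent in the branching tree whose position at time $t$ equals $a_\theta(t)(1+o(1))$ as $t\to\infty$ is $\mathcal{F}_\infty$-measurable, since it does not refer to which ray is the spine; under $\tilde Q_\theta$ the spine is such a line of descent, so $\tilde Q_\theta(A)=1$, hence $Q_\theta(A)=1$ because $Q_\theta=\tilde Q_\theta|_{\mathcal{F}_\infty}$, and therefore $P(A)=1$ by equivalence of $P$ and $Q_\theta$ on $\mathcal{F}_\infty$. On $A$ we have $R_t\geq a_\theta(t)(1+o(1))$, so $\liminf_{t\to\infty}R_t/a_\theta(t)\geq1$ $P$-a.s. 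It then remains to compute $a_\theta$ asymptotically: in Case~A, $a_\theta(t)=\lambda(\theta_0-\theta_0^{-1})t$; in Case~B, using $\hat b-1=\hat b p$ and $\hat b^{-1}=1-p$ together with the slow variation of $(\log(s+2))^{-\hat b}$, one gets $a_\theta(t)\sim c\,(t/\log t)^{\hat b}$; and in Case~C, the substitution $u=\sqrt s$ gives $a_\theta(t)=\lambda\int_0^t e^{\alpha\sqrt s}\,\mathrm{d}s\sim\tfrac{2\lambda}{\alpha}\sqrt t\,e^{\alpha\sqrt t}$, so $\log a_\theta(t)\sim\alpha\sqrt t$. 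Substituting, we obtain $P$-a.s.\ that $\liminf_t R_t/t\geq\lambda(\theta_0-\theta_0^{-1})$ (Case~A), $\liminf_t(\log t/t)^{\hat b}R_t\geq c$ (Case~B), and $\liminf_t\log R_t/\sqrt t\geq\alpha$ (Case~C).

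Finally I would optimise over the free parameter. Taking $\theta_0=\hat\theta-\tfrac1n$, $c=\hat c-\tfrac1n$ and $\alpha=\sqrt{2\beta}-\tfrac1n$ for $n\in\mathbb{N}$ large, each choice gives a countable family of $P$-almost-sure lower bounds; intersecting these full-measure events and letting $n\to\infty$ yields the stated inequalities, using in Case~A that $g$ from \eqref{g_fun} is continuous so that $\lambda(\theta_0-\theta_0^{-1})\uparrow\lambda(\hat\theta-\hat\theta^{-1})$ as $\theta_0\uparrow\hat\theta$.

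I expect the only genuinely delicate point to be the measurability and transfer argument of the second paragraph: formulating ``some ray behaves like $a_\theta(t)$'' as an event in $\mathcal{F}_\infty$, checking that it is indeed $\tilde Q_\theta$-a.s.\ forced by the asymptotics of the spine, and then invoking $Q_\theta=\tilde Q_\theta|_{\mathcal{F}_\infty}$ and $Q_\theta\sim P$. Everything else — the Poisson strong laws from Section~3 and the elementary asymptotics of $a_\theta$ — is routine.
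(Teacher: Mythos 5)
Your proposal is correct and matches the paper's argument essentially step for step: take the $\theta$ from the UI cases of Theorem~\ref{M_limit}, use equivalence of $P$ and $Q_\theta$ on $\mathcal F_\infty$, observe that $\tilde Q_\theta$-a.s.\ the spine is an infinite ray with $\xi_t \sim a_\theta(t)$, transfer this to an $\mathcal F_\infty$-measurable ``there exists a ray following $a_\theta(t)$'' event (the paper's $B_{\theta_0}$, $B_c$, $B_\alpha$), and then let the free parameter tend to its critical value along a countable sequence. The ``delicate point'' you flag is handled in the paper exactly as you propose.
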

\begin{proof}
$ $ 

\noindent $\underline{\mathbf{Case \ A } \ (p = 0)}$:

We consider $\theta = (\theta^+, \theta^-)$, where $\theta^+(\cdot) \equiv \theta_0$, 
$\theta^-(\cdot) \equiv \frac{1}{\theta_0}$ and $ \theta_0 < \hat{\theta}$. Take the event
\[
B_{\theta_0} := \Big\{ \exists \text{ infinite line of descent } u : \liminf_{t \to \infty} \frac{X_u(t)}{t}
= \lambda(\theta_0 - \frac{1}{\theta_0}) \Big\} \in \mathcal{F}_{\infty} \text{.}
\]
We know that $\tilde{Q}_{\theta} (\lim_{t \to \infty} \frac{\xi_t}{t} =  \lambda(\theta_0 - \frac{1}{\theta_0})) = 1$. 
Hence $Q_{\theta} (B_{\theta_0}) = \tilde{Q}_{\theta} (B_{\theta_0}) = 1$. Since $Q_{\theta}$ and $P$ 
are equivalent it follows that $P (B_{\theta_0}) = 1$. Thus 
$ P \big( \liminf_{t \to \infty} t^{-1}R_t \geq  \lambda(\theta_0 - \theta_0^{-1}) \big) = 1 $. 
Taking the limit $\theta_0 \nearrow \hat{\theta}$ we get 
\[
P \Big( \liminf_{t \to \infty} \frac{R_t}{t} \geq \lambda \big( \hat{\theta} - \frac{1}{\hat{\theta}} \big) \Big) = 1 \text{.}
\]
$\underline{\mathbf{Case \ B } \ (p \in (0,1))}$:

Consider $\theta = (\theta^+, \theta^-)$, where $\theta^-(\cdot) \equiv 1$, 
$\theta^+(s) = \dfrac{c}{\lambda(1-p)} \dfrac{s^{\hat{b}-1}}{(\log (s+2))^{\hat{b}}}$ and $ c < \hat{c}$.
Take the event
\[
B_c := \Big\{ \exists \text{ infinite line of descent } u : \liminf_{t \to \infty} \Big( \dfrac{\log
t}{t}\Big)^{\hat{b}}X_u(t) = c \Big\} \text{.}
\]
Same argument as above gives that $P (B_{c}) = 1$ and hence 
$P \Big( \liminf_{t \to \infty}\Big( t^{-1} \log t \Big)^{\hat{b}} R_t \geq c \Big) = 1$ for all $c < \hat{c}$ . 
Letting $c \nearrow \hat{c}$ proves the result. \newline
$\underline{\mathbf{Case \ C } \ (p = 1)}$:

Consider $\theta = (\theta^+, \theta^-)$, where $\theta^-(\cdot) \equiv 1$, 
$\theta^+(s) = e^{\alpha \sqrt{s}}$ and $ \alpha < \sqrt{2 \beta}$.
Take the event
\[
B_{\alpha} := \Big\{ \exists \text{ infinite line of descent } u : \liminf_{t \to \infty} \frac{\log X_u(t)}{\sqrt{t}}
= \sqrt{2 \beta} \Big\} \text{.}
\]
Again, the same argument as above gives $P (B_{\alpha}) = 1$ and hence for all $ \alpha < \sqrt{2 \beta}$ we find that 
$P \Big( \liminf_{t \to \infty} t^{-1/2}\log R_t \geq \alpha \Big) = 1$. Letting 
$\alpha \nearrow \sqrt{2 \beta}$ proves the result. 
\end{proof}
\subsection{Upper bound on the rightmost particle} 
To complete the proof of Theorem \ref{main} and hence the whole section we need to prove 
the following proposition.
\begin{Proposition}
\label{upperbound}
Let $\hat{\theta}$, $\hat{b}$ and $\hat{c}$ be as defined in Theorem \ref{main}. Then for different 
values of $p$ we have the following.

$\underline{\mathbf{Case \ A } \ (p = 0)}$:
\[
\limsup_{t \to \infty} \frac{R_t}{t} \leq \lambda(\hat{\theta} - \frac{1}{\hat{\theta}}) \ P \text{-a.s.}
\]

$\underline{\mathbf{Case \ B } \ (p \in (0,1))}$:
\[
\limsup_{t \to \infty} \Big( \dfrac{\log t}{t} \Big)^{\hat{b}} R_t
\leq \hat{c} \ P \text{-a.s.}
\]

$\underline{\mathbf{Case \ C } \ (p = 1)}$:
\[
\limsup_{t \to \infty} \frac{\log R_t}{\sqrt{t}} \leq \sqrt{2 \beta} \ P \text{-a.s.}
\]
\end{Proposition}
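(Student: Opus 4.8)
The plan is to match each lower bound of Proposition \ref{lowerbound} with an upper bound of the same order. In the homogeneous case $p=0$ a single use of the additive martingale $M_\theta$ suffices; for $p\in(0,1]$ I would run a first-moment (many-to-one) estimate together with a Borel--Cantelli argument along a discrete sequence of times, exploiting the all-time maximum to make the interpolation between those times trivial.

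\textbf{Case A ($p=0$).} Fix $\theta_0>1$ and take $\theta=(\theta^+,\theta^-)$ with $\theta^+\equiv\theta_0$, $\theta^-\equiv\frac{1}{\theta_0}$, as in Case A of Theorem \ref{M_limit}. Then \eqref{additivemart1_} reduces to
\[
M_\theta(t)=\sum_{u\in N_t}\exp\Big(X_u(t)\log\theta_0-\big(\beta-\lambda(2-\theta_0-\tfrac{1}{\theta_0})\big)t\Big),
\]
a nonnegative $P$-martingale, so Doob's maximal inequality gives $\sup_{s\ge0}M_\theta(s)<\infty$ $P$-a.s. Keeping only the summand of the rightmost particle, $\exp\big(R_t\log\theta_0-(\beta-\lambda(2-\theta_0-\theta_0^{-1}))t\big)\le\sup_{s\ge0}M_\theta(s)$, whence
\[
\limsup_{t\to\infty}\frac{R_t}{t}\ \le\ \frac{\beta-\lambda(2-\theta_0-\theta_0^{-1})}{\log\theta_0}\qquad P\text{-a.s.}
\]
Viewed as a function of $\theta_0\in(1,\infty)$ the right-hand side tends to $+\infty$ at both endpoints, and a short computation shows its stationarity equation is precisely \eqref{thetacritical}; hence it is minimised at $\theta_0=\hat\theta$, with minimal value $\lambda(\hat\theta-\hat\theta^{-1})$. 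Intersecting the displayed event over a sequence of rationals $\theta_0\downarrow\hat\theta$ gives Case A.

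\textbf{Cases B ($p\in(0,1)$) and C ($p=1$).} Fix $\eps>0$ and a small slack $\eta>0$, set $t_n=n$, and let $f^*$ denote the critical curve: $f^*(t)=\hat c\,(t/\log t)^{\hat b}$ in Case B, $f^*(t)=e^{\sqrt{2\beta}\sqrt t}$ in Case C. Write $\bar R_t:=\max_{u\in N_t}\sup_{s\le t}X_u(s)$, which is nondecreasing in $t$ and dominates $R_t$. The Many-to-One Theorem \ref{manytoone}, applied to $f_u(t)=\mathbf 1_{\{\sup_{s\le t}X_u(s)>\ell\}}$, gives
\[
E\Big(\#\{u\in N_t:\sup_{s\le t}X_u(s)>\ell\}\Big)=\tilde E\Big(\mathbf 1_{\{\sup_{s\le t}\xi_s>\ell\}}\,e^{\beta\int_0^t|\xi_s|^p\,\mathrm{d}s}\Big),
\]
with $(\xi_s)$ a single walk under $\tilde P$. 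The heart of the matter is the estimate
\[
\tilde E\Big(\mathbf 1_{\{\sup_{s\le t}\xi_s>(1+\eps)f^*(t)\}}\,e^{\beta\int_0^t|\xi_s|^p\,\mathrm{d}s}\Big)\ \le\ C\,e^{-\delta(t)},\qquad \sum_n e^{-\delta(t_n)}<\infty .
\]
I would prove this by changing measure via the single-particle martingale of Proposition \ref{prop_ch_meas}, taking $\theta^-\equiv1$ and $\theta^+$ an enlarged version (by the factor governed by $\eta$) of the critical choice in Theorem \ref{M_limit}, so that the induced drift curve $\gamma(s)=\int_0^s\lambda(\theta^+(r)-1)\,\mathrm{d}r$ is strictly supercritical. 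After this change of measure the surviving exponent is, to leading order, $\int_0^t\big(\beta\gamma(s)^p-\Lambda(\gamma'(s))\big)\,\mathrm{d}s$, with $\Lambda$ the rate function of Section 2; since $\gamma$ lies strictly above the solution of $\Lambda((f^*)')=\beta(f^*)^p$, this quantity is negative and diverges to $-\infty$ (one checks $\delta(t_n)\gtrsim f^*(t_n)/\hat c$, which is super-summable). The concentration of the tilted walk near $\gamma$ needed to control the remaining compensated-martingale fluctuations, and the passage from the running supremum to a level-hitting time, are supplied by Propositions \ref{poisson_c}--\ref{poisson_d}, the residual contribution of paths straying from $\gamma$ being absorbed by the slack $\eta$. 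Granting the estimate, Borel--Cantelli yields $\bar R_{t_n}\le(1+\eps)f^*(t_n)$ for all large $n$, $P$-a.s.; monotonicity of $\bar R$ then gives $R_t\le\bar R_t\le\bar R_{t_{n+1}}\le(1+\eps)f^*(t_{n+1})$ for $t\in[t_n,t_{n+1}]$, and since $f^*(t_{n+1})/f^*(t_n)\to1$ we obtain $\limsup_{t}(\log t/t)^{\hat b}R_t\le\hat c(1+\eps)(1+\eta)$ in Case B and $\limsup_{t}\log R_t/\sqrt t\le\sqrt{2\beta}(1+\eps)(1+\eta)$ in Case C; letting $\eps,\eta\downarrow0$ finishes Cases B and C.

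\textbf{Main obstacle.} The crux is the single-particle Feynman--Kac estimate above: one must show quantitatively that forcing the random walk above a supercritical curve while harvesting the breeding gain $e^{\beta\int|\xi|^p}$ entails a net cost diverging fast enough to be summable, and must control the fluctuation error introduced by the change of measure and by replacing the final position with the running supremum. Working with a strictly supercritical tilt, rather than the exactly critical one, is precisely what makes this fluctuation control routine and lets the heuristic large-deviations picture of Section 2 be bypassed; the interpolation between the $t_n$ is then essentially free, since the gap between the barrier and $f^*(t_n)$ is of the order of $f^*(t_n)$ itself, which the population cannot bridge in a single unit of time.
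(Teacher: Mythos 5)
Your Case A argument is correct, and it is genuinely simpler than the paper's: you observe that for $p=0$ the additive martingale summand of a particle depends only on its endpoint $X_u(t)$, keep the rightmost term, use a.s.\ finiteness of $\sup_s M_\theta(s)$, and then optimise the resulting linear bound over $\theta_0>1$, verifying that the stationarity equation is exactly \eqref{thetacritical} so the minimum equals $\lambda(\hat\theta-\hat\theta^{-1})$. The paper instead proves Case A (and B, C uniformly) by contradiction: it assumes the rightmost particle a.s.\ escapes a supercritical region $D(f)$, exploits recurrence of the spine to manufacture infinitely many independent subtrees, catches each subtree's first descendant $w_n$ at its exit time $J_n$ of $D(f)$, and shows that this single particle's summand forces $M_\theta(J_n)\to\infty$, contradicting martingale convergence. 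Your direct route avoids the 0-1 law (Lemma \ref{zero_one2}) entirely in Case A, which is a genuine simplification, but it only works there because for $p=0$ the path-dependent term $\beta\int_0^t|X_u(s)|^p\,\mathrm ds$ collapses to $\beta t$.

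For Cases B and C your proposal has a genuine gap. The entire argument rests on the single-particle Feynman--Kac bound
\[
\tilde E\Big(\mathbf 1_{\{\sup_{s\le t}\xi_s>(1+\eps)f^*(t)\}}\,e^{\beta\int_0^t|\xi_s|^p\,\mathrm ds}\Big)\le C e^{-\delta(t)},\qquad \textstyle\sum_n e^{-\delta(t_n)}<\infty,
\]
which you label ``the heart of the matter'' and ``the main obstacle'' and then do not prove. What you sketch in its place is not sufficient: the Radon--Nikodym tilt you propose controls the walk $\tilde Q_\theta$-almost surely via Propositions \ref{poisson_c}--\ref{poisson_d}, but those are strong-law statements and give no exponential-moment control of the compensating term $\int_0^t\log\theta^+(s)\,\mathrm d\xi^+_s$ that appears in $1/M_t$; the leading-order comparison $\int_0^t(\beta\gamma^p-\Lambda(\gamma'))\,\mathrm ds<0$ does not by itself bound the expectation, because the heavy-tailed contribution of paths deviating from $\gamma$ must be shown to be dominated, which is precisely the hard part; and the step from the event $\{\sup_{s\le t}\xi_s>\ell\}$ to a usable estimate at a hitting time is also not supplied by those SLLN-type propositions. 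The paper sidesteps all of this: it never estimates any first-moment Feynman--Kac quantity, but instead works pathwise, using the constraint $|X_{w_n}(s)|<f(s)$ on $[T',J_n)$ together with $X_{w_n}(J_n)\ge f(J_n)$ and an integration-by-parts identity (Proposition \ref{poisson_b}) to get a \emph{deterministic} lower bound on the single summand $spine$-type term, with a crucially different choice $\theta^-(s)=1/\theta_B(s)$ (not the $\theta^-\equiv1$ you tilt by) that makes the two jump integrals combine into $\int\log\theta_B\,\mathrm dX_{w_n}$. Until the Feynman--Kac estimate (and the running-supremum variant of it) is actually proved, your Cases B and C remain a plausible programme rather than a proof.
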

To prove Proposition \ref{upperbound} we shall assume for contradiction that 
it is false. Then we shall show that under such assumption certain additive 
$P$-martingales will diverge to $ \infty$ contradicting the Martingale Convergence Theorem.

We start by proving the following 0-1 law.
\begin{Lemma}
\label{zero_one2}
Let $g:[0, \infty) \to \mathbb{R}$ be increasing, $f:[0, \infty) \to [0, \infty)$ 
be such that $ \forall s \geq 0 \ \frac{f(t)}{f(s+t)} \to 1$ as $t \to \infty$ and $a > 0$. Then 
\[
P \Big( \limsup_{t \to \infty} \dfrac{g(R_t)}{f(t)} \leq a \Big) \in \{ 0, 1 \} \text{.}
\]
\end{Lemma}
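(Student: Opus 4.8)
The plan is to exhibit a tail-type argument reducing the event $\{\limsup_{t\to\infty} g(R_t)/f(t) \le a\}$ to a product martingale to which Lemma \ref{zero_one_law} applies, in the same spirit as the proof that $P(M_\theta(\infty)>0)\in\{0,1\}$. First I would fix the increasing function $g$, the function $f$ with the property $f(t)/f(s+t)\to 1$, and $a>0$, and set
\[
q(x) := P^x\Big( \limsup_{t \to \infty} \dfrac{g(R_t)}{f(t)} \le a \Big).
\]
The key observation is that the event $\{\limsup_{t\to\infty} g(R_t)/f(t)\le a\}$ is, up to negligible contributions from finite initial segments, determined by the limiting behaviour of the subtrees, and the crucial input is that $f$ grows slowly enough that a time-shift does not affect the normalisation: if a subtree is initiated at time $s$ from position $y$, then its own rightmost particle $R^{(y,s)}_{t}$ contributes $g(R^{(y,s)}_{t})/f(s+t)$ to the global $\limsup$, and since $f(t)/f(s+t)\to 1$ this has the same $\limsup$ as $g(R^{(y,s)}_t)/f(t)$, i.e. it is governed by $q(y)$.

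Next I would make this precise by decomposing the population at a fixed time $t_0$: conditionally on $\mathcal F_{t_0}$, the subtrees rooted at the particles $u \in N_{t_0}$ are independent branching random walks started from $X_u(t_0)$, and $R_t = \max_{u\in N_{t_0}} R^{u}_{t-t_0}$ for $t>t_0$. Because $g$ is increasing and $N_{t_0}$ is finite, $\limsup_{t\to\infty} g(R_t)/f(t) \le a$ holds if and only if it holds for every subtree, i.e. if and only if $\limsup_{t\to\infty} g(R^u_{t-t_0})/f(t)\le a$ for all $u\in N_{t_0}$; and using $f(t-t_0)/f(t)\to 1$, this last event for the subtree rooted at $u$ has $P^{X_u(t_0)}$-probability exactly $q(X_u(t_0))$ by definition. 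By the branching Markov property and independence of the subtrees,
\[
q(x) = E^x\Big( \prod_{u \in N_{t_0}} q\big(X_u(t_0)\big) \Big),
\]
so $M_{t_0} := \prod_{u\in N_{t_0}} q(X_u(t_0))$ is a $P$-martingale (with values in $[0,1]$). Lemma \ref{zero_one_law} then forces $q \equiv q_0 \in \{0,1\}$; in particular $P(\limsup_{t\to\infty} g(R_t)/f(t)\le a) = q(0) \in\{0,1\}$, which is the claim.

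The main obstacle I expect is the measure-theoretic bookkeeping in the step ``the global $\limsup$ is $\le a$ iff every subtree's $\limsup$ is $\le a$'': one must be careful that $g$ is only assumed increasing (not continuous or strictly increasing), that $R_t$ is a max over a finite but random set, and that the reduction $g(R^u_{t-t_0})/f(t)$ versus $g(R^u_{t-t_0})/f(t-t_0)$ genuinely preserves the $\limsup$ — the latter uses $f(t-t_0)/f(t)\to 1$ together with the fact (from Theorem \ref{criticalrw}, case $p\le 1$) that $R_t\to\infty$ so $g(R_t)$ is eventually in the increasing regime of $g$, and one should rule out degenerate cases where $f$ vanishes. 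All of this is routine once the shift-invariance of $f$ is invoked, but it is where the hypotheses on $f$ and $g$ are actually consumed. A secondary check is that $q$ is Borel-measurable in $x$ so that the expectation $E^x(\prod_u q(X_u(t_0)))$ makes sense, which follows from standard regularity of branching Markov processes.
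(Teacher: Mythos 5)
Your proposal follows essentially the same route as the paper: define $q(x) = P^x(\limsup_{t\to\infty} g(R_t)/f(t)\le a)$, condition on $\mathcal F_s$ to decompose $R_{t+s}$ as a max over the finitely many subtrees rooted at time $s$, use monotonicity of $g$ and finiteness of $N_s$ to interchange max and $\limsup$, absorb the time-shift in the normalisation via $f(t)/f(t+s)\to 1$, and conclude that $\prod_{u\in N_s} q(X_u(s))$ is a product martingale so that Lemma \ref{zero_one_law} applies. The only differences are cosmetic (you shift by $t-t_0$ rather than $t+s$, and you spell out some measurability and interchange caveats that the paper passes over), so the argument matches the paper's proof.
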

\begin{proof}
We consider
\[
q(x) = P^x \Big( \limsup_{t \to \infty} \dfrac{g(R_t)}{f(t)} \leq a \Big) \text{.}
\]
Then, it is easy to see that  
\begin{align*}
q(x) &= E^x \Big( P^x \big( \limsup_{t \to \infty} \frac{g(R_{t+s})}{f(t+s)} \leq a \big\vert \mathcal{F}_s \big) \Big) \\
&=  E^x \Big( P^x \big( \limsup_{t \to \infty} \frac{g( \max_{u \in N_s} R^u_t)}{f(t+s)} \leq a \big\vert \mathcal{F}_s \big) \Big) \\
&=  E^x \Big( P^x \big( \max_{u \in N_s} \big\{ \limsup_{t \to \infty} \frac{g(R^u_t)}{f(t+s)} \big\}
\leq a \big\vert \mathcal{F}_s \big) \Big) \\
&= E^x \Big( \prod_{u \in N_s} P^{X_u(s)} \big( \limsup_{t \to \infty} \frac{g(R_t)}{f(t + s)} \leq a 
\big) \Big) \\
&= E^x \Big( \prod_{u \in N_s} P^{X_u(s)} \big( \limsup_{t \to \infty} \frac{g(R_t)}{f(t)} \leq a 
\big) \Big) \\
&= E^x \Big( \prod_{u \in N_s} q \big( X_u(s) \big) \Big) \text{,}
\end{align*}
where $(R^u_t)_{t \geq 0}$ is the position of the rightmost particle of a subtree started from $X_u(s)$.

Thus $\prod_{u \in N_t} q(X_u(t))$ is a martingale. Applying Lemma \ref{zero_one_law} to $q(\cdot)$ we obtain the required result. 
\end{proof}
\begin{proof}[Proof of Proposition \ref{upperbound}]
The first step of the proof is slightly different for cases A, B and C, so we 
do it for the three cases separately.

$\underline{\mathbf{Case \ A } \ (p = 0)}$

Let us suppose for contradiction that $ \exists \theta_0 > \hat{\theta}$ such that 
\begin{equation}
\label{eq_x}
P \Big( \limsup_{t \to \infty} \frac{R_t}{t} > \lambda(\theta_0 - \frac{1}{\theta_0}) \Big) = 1 \text{.}
\end{equation}
Choose any $\theta_A \in (\hat{\theta}, \theta_0)$ and take $ \theta = (\theta^+, \theta^-)$, where 
$\theta^+(\cdot) \equiv \theta_A$, $\theta^-(\cdot) = \dfrac{1}{\theta_A}$.
Let 
\[
f_A(s) := \lambda(\theta_A - \frac{1}{\theta_A}) s \text{, } \quad s \geq 0 \text{.}
\]

$\underline{\mathbf{Case \ B } \ (p \in (0,1))}$

Let us suppose for contradiction that $ \exists c_0 > \hat{c}$ such that 
\begin{equation}
\label{eq_y}
P \Big( \limsup_{t \to \infty} \big( \frac{\log t}{t} \big)^{\hat{b}} R_t > c_0 \Big) = 1 \text{.}
\end{equation}
Choose any $c_1 \in (\hat{c}, c_0)$ and take $ \theta = (\theta^+, \theta^-)$, where 
$\theta^+(s) = \theta_B(s)$, $\theta^-(s) = \dfrac{1}{\theta_B(s)}$ and 
\[
\theta_B(s) = \frac{c_1}{\lambda (1-p)} \frac{s^{\hat{b}-1}}{(\log (s+2))^{\hat{b}}} \text{, } \quad s \geq 0 \text{.}
\] 
Let 
\[
f_B(s) := c_1 \Big( \frac{s}{ \log (s+2)} \Big)^{\hat{b}} \text{, } \quad s \geq 0 \text{.}
\]

$\underline{\mathbf{Case \ C } \ (p = 1)}$

Let us suppose for contradiction that $ \exists \alpha_0 > \sqrt{2 \beta}$ such that 
\begin{equation}
\label{eq_z}
P \Big( \limsup_{t \to \infty} \frac{\log R_t}{\sqrt{t}} > \alpha_0 \Big) = 1 \text{.}
\end{equation}
Choose any $\alpha_1 \in (\sqrt{2 \beta}, \alpha_0)$ and take $ \theta = (\theta^+, \theta^-)$, where 
$\theta^+(s) = \theta_C(s)$, $\theta^-(s) = \dfrac{1}{\theta_C(s)}$ and 
\[
\theta_C(s) = \frac{1}{\sqrt{s+1}} e^{\alpha_1 \sqrt{s}} \text{, } \quad s \geq 0 \text{.}
\] 
Let 
\[
f_C(s) := e^{\alpha_1 \sqrt{s}} \text{, } \quad s \geq 0 \text{.}
\]
The next step in the proof is the same in all cases. 

Let us write $f$ to denote $f_A$, $f_B$ and $f_C$. We define $D(f)$ to be the space-time region 
bounded above by the curve $y = f(t)$ and below by the curve $y = - f(t)$. 

Under $P$ the spine process $(\xi_t)_{t \geq 0}$ is a continuous-time random walk 
and so \newline $\dfrac{\vert \xi_t \vert}{t} \to 0 \ P$-a.s. as $t \to \infty$. Hence there exists 
an a.s. finite random time $T' < \infty$ such that $\xi_t \in D(f)$ for all $t > T'$.
 
Since $(\xi_t)_{t \geq 0}$ is recurrent it will spend an infinite amount of time at 
position $y = 1$. During this time it will be giving birth to offspring at rate
$\beta$. This assures us of the existence of an infinite sequence $\{ T_n \}_{n \in \mathbb{N}}$ 
of birth times along the path of the spine when it stays at $y = 1$ with 
$0 \leq T' \leq T_1 < T_2 < ...$ and $T_n \nearrow \infty$. 

Denote by $u_n$ the label of the particle born at time $T_n$, which does not continue the spine. 
Then each particle $u_n$ gives rise to an independent copy of the Branching
random walk under $P$ started from $\xi_{T_n}$ at time $T_n$. Almost surely, by assumptions 
\eqref{eq_x}, \eqref{eq_y} and \eqref{eq_z}, each $u_n$ has some descendant that leaves the space-time region $D(f)$. 

Let $\{ v_n \}_{n \in \mathbb{N}}$ be the subsequence of $ \{ u_n \}_{n \in \mathbb{N}} $ of those particles
whose first descendent leaving $D(f)$ does this by crossing the upper boundary $y = f(t)$. 
Since the breeding potential is symmetric and the particles 
$u_n$ are born in the upper half-plane, there is at least probability 
$\frac{1}{2}$ that the first descendant of $u_n$ to leave $D(f)$ 
does this by crossing the positive boundary curve. Therefore $P$-a.s. the 
sequence $ \{ v_n\}_{n \in \mathbb{N}} $ is infinite. 

Let $w_n$ be the decsendent of $v_n$, which exits $D(f)$ first and let $J_n$ 
be the time when this occurs. That is, 
\[
J_n = \inf \big\{t : X_{w_n}(t) \geq f(t) \big\} \text{.}
\] 
Note that the path of particle $w_n$ satisfies 
\[
|X_{w_n}(s)| < f(s) \quad \forall s \in [T', J_n) \text{.}
\]
 Clearly $J_n \to \infty$ as $n \to \infty$. To obtain a contradiction we shall show that the additive martingale $M_{\theta}$ 
fails to converge along the sequence of times $ \{ J_n \}_{n \geq 1}$, where $ \theta$ was defined above differently 
for cases A, B and C. Thus for the last bit of the proof we have to look at cases A, B and C separately again. \newline
$\underline{\mathbf{Case \ A } \ (p = 0)}$
\begin{align*}
M_{\theta}(J_n) &= \sum_{u \in N_{J_n}} \exp \Big\{ \int_0^{J_n} \log \theta_A \mathrm{d}X_u^+(s) 
+ \int_0^{J_n} \log \big( \frac{1}{\theta_A} \big) \mathrm{d}X_u^-(s) \\
& \qquad + \lambda \int_0^{J_n} \big(2 - \theta_A - \frac{1}{\theta_A} \big) \mathrm{d}s 
- \beta \int_0^{J_n} 1 \mathrm{d}s \Big\} \\ 
&\geq \exp \Big\{ \int_0^{J_n} \log \theta_A \mathrm{d}X^+_{w_n}(s) + \int_0^{J_n} \log
\big(\frac{1}{\theta_A}\big) \mathrm{d}X^-_{w_n}(s) \\ 
& \qquad + \lambda \int_0^{J_n} \big(2 - \theta_A - \frac{1}{\theta_A}\big) \mathrm{d}s - 
\beta \int_0^{J_n} 1 \mathrm{d}s \Big\} \\
&= \exp \Big\{ \log \theta_A X^+_{w_n}(J_n) - \log \theta_A X^-_{w_n}(J_n) 
+ \lambda \big(2 - \theta_A - \frac{1}{\theta_A} \big) J_n - \beta J_n \Big\} \\
&= \exp \Big\{ \log \theta_A X_{w_n}(J_n) + \lambda \big(2 -
\theta_A - \frac{1}{\theta_A} \big) J_n - \beta J_n  \Big\} \\
&\geq \exp \Big\{ a_1 J_n \log \theta_A + \lambda \big(2 - \theta_A - 
\frac{1}{\theta_A} \big) J_n - \beta J_n \Big\} \\
&= \exp \Big\{ \Big( \lambda\big((\theta_A - \frac{1}{\theta_A}\big)\log 
\theta_A + \lambda \big(2 - \theta_A - \frac{1}{\theta_A} \big) - \beta \Big) J_n \Big\} \\
&= \exp \Big\{ \Big( \lambda g(\theta_A) - \beta \Big)J_n \Big\} \text{,}
\end{align*}
where $g(\cdot)$ is the same as in \eqref{g_fun}. Then since $g(\cdot)$ is increasing, $ \theta_A > \hat{\theta}$ and 
$g(\hat{\theta}) = \frac{\beta}{\lambda}$ it follows that 
\[
\lambda g(\theta_A) - \beta > 0
\]
and thus $M_{\theta}(J_n) \to \infty$ as $n \to \infty$, which is a contradiction.
Therefore assumption \eqref{eq_x} is wrong and we must have that $ \forall \theta_0 > \hat{\theta}$ 
\[
P \Big( \limsup_{t \to \infty} \frac{R_t}{t} > \lambda(\theta_0 - \frac{1}{\theta_0}) \Big) \neq 1 \text{.}
\]
It follows from Lemma \ref{zero_one2} that $ \forall \theta_0 > \hat{\theta} \ P \Big( \limsup_{t \to \infty} \frac{R_t}{t} > \lambda(\theta_0 - \frac{1}{\theta_0}) \Big) = 0$. 
Hence $P \Big( \limsup_{t \to \infty} \frac{R_t}{t} \leq \lambda(\theta_0 - \frac{1}{\theta_0}) \Big) = 1$ and after 
letting $\theta_0 \searrow \hat{\theta}$ we get 
\[
P \Big( \limsup_{t \to \infty} \frac{R_t}{t} \leq  \lambda(\hat{\theta} - \frac{1}{\hat{\theta}}) \Big) = 1 \text{.}
\]
$\underline{\mathbf{Case \ B } \ (p \in (0,1))}$
\begin{align*}
M_{\theta}(J_n) &= \sum_{u \in N_{J_n}} \exp \Big\{ \int_0^{J_n} \log \theta_B(s)
\mathrm{d}X_u^+(s) + \int_0^{J_n} \log \big( \frac{1}{\theta_B(s)} \big) \mathrm{d}X_u^-(s) \\
& \qquad + \lambda \int_0^{J_n} \big(2 - \theta_B(s) - \frac{1}{\theta_B(s)} \big) \mathrm{d}s 
- \beta \int_0^{J_n} \vert X_u(s) \vert^p \mathrm{d}s \Big\} \\ 
&\geq \exp \Big\{ \int_0^{J_n} \log \theta_B(s) \mathrm{d}X_{w_n}^+(s) + 
\int_0^{J_n} \log \big( \frac{1}{\theta_B(s)} \big) \mathrm{d}X_{w_n}^-(s) \\
& \qquad + \lambda \int_0^{J_n} \big(2 - \theta_B(s) - \frac{1}{\theta_B(s)} \big) \mathrm{d}s 
- \beta \int_0^{J_n} \vert X_{w_n}(s) \vert^p \mathrm{d}s \Big\} \text{.}
\end{align*}
Applying the integration by parts formula from Proposition \ref{poisson_b} we get 
\begin{align*}
&\exp \Big\{ \log \theta_B(J_n) X^+_{w_n}(J_n) - \int_0^{J_n} \frac{\theta_B'(s)}{\theta_B(s)} X^+_{w_n}(s) \mathrm{d}s \\
& \qquad - \log \theta_B(J_n) X^-_{w_n}(J_n) + \int_0^{J_n} \frac{\theta_B'(s)}{\theta_B(s)} X^-_{w_n}(s) \mathrm{d}s \\
& \qquad +  \lambda \int_0^{J_n} \big( 2 - \theta_B(s) - \frac{1}{\theta_B(s)} \big) 
\mathrm{d}s - \beta \int_0^{J_n} \vert X_{w_n}(s) \vert^p \mathrm{d}s \Big\} \\
= &\exp \Big\{ \log \theta_B(J_n) X_{w_n}(J_n) - \int_0^{J_n} \frac{\theta_B'(s)}{\theta_B(s)} X_{w_n}(s) \mathrm{d}s \\
& \qquad + \lambda \int_0^{J_n} \big( 2 - \theta_B(s) - \frac{1}{\theta_B(s)} \big) \mathrm{d}s - \beta \int_0^{J_n} 
\vert X_{w_n}(s) \vert^p \mathrm{d}s \Big\} \\
\geq &C \exp \Big\{ \log \theta_B(J_n) f_B(J_n) - \int_0^{J_n} \frac{\theta_B'(s)}{\theta_B(s)} f_B(s) \mathrm{d}s \\
& \qquad + \lambda \int_0^{J_n} \big( 2 - \theta_B(s) - \frac{1}{\theta_B(s)} \big) \mathrm{d}s - \beta \int_0^{J_n} 
f_B(s)^p \mathrm{d}s \Big\}
\end{align*}
using the facts that $X_{w_n}(J_n) \geq f_B(J_n)$ and $|X_{w_n}(s)| < f_B(s)$ for $s \in [T', J_n)$ 
and where $C$ is some $P$-a.s positive random variable. Now asymptotic properties of 
$ \theta_B(\cdot)$ and $f_B(\cdot)$ give us that for any $ \epsilon > 0$ 
and $n$ large enough the above expression is 
\[
\geq C_{\epsilon} \exp \Big\{ (\hat{b} - 1)c_1 \frac{(J_n)^{\hat{b}}}{(\log J_n)^{\hat{b}-1}} (1 - \epsilon) 
- \beta c_1^p \dfrac{1}{\hat{b}} \dfrac{(J_n)^{\hat{b}}}{(\log J_n)^{\hat{b}-1}} (1 + \epsilon) \Big\}
\]
for some $P$-a.s. positive random variable $C_{\epsilon}$. Then since $c_1 > \hat{c} = \Big( \frac{\beta(1-p)^2}{p} \Big)^{(1-p)^{-1}}$ 
\[
(\hat{b} - 1) c_1 (1 - \epsilon) - \beta c_1^p \frac{1}{\hat{b}} (1 + \epsilon) = 
c_1^p (\hat{b} - 1)(1 - \epsilon) \Big( c_1^{1 -p} - \hat{c}^{1-p} \frac{1 + \epsilon}{1 - \epsilon} \Big) > 0
\]
for $\epsilon$ small enough. Thus $M_{\theta}(J_n) \to \infty$ as $n \to \infty$, which is a contradiction.
Therefore assumption \eqref{eq_y} is wrong and we must have that $ \forall c_0 > \hat{c}$ 
\[
P \Big( \limsup_{t \to \infty} \big( \dfrac{\log t}{t} \big)^{\hat{b}} R_t > c_0 \Big) \neq 1 \text{.}
\]
It follows from Lemma \ref{zero_one2} that $ \forall c_0 > \hat{c}$ 
\[
P \Big( \limsup_{t \to \infty} \big( \dfrac{\log t}{t} \big)^{\hat{b}} R_t \leq c_0 \Big) = 1
\]
Hence taking the limit $c_0 \searrow \hat{c}$ proves Proposition \ref{upperbound} in Case B. \newline
$\underline{\mathbf{Case \ C } \ (p = 1)}$

Essentially the same argument as in Case B gives that for any $ \epsilon > 0$ 
and $n$ large enough
\[
M_{\theta}(J_n) \geq C_{\epsilon} \exp \Big\{ (1 - \epsilon) \alpha_1 \sqrt{J_n} e^{\alpha_1 \sqrt{J_n}} 
- (1 + \epsilon) \frac{2 \beta}{\alpha_1} \sqrt{J_n} e^{\alpha_1 \sqrt{J_n}} \Big\}
\]
for some $C_{\epsilon} > 0 \ P$-a.s. Then since $ \alpha_1 > \sqrt{2 \beta}$ 
\[
(1 - \epsilon) \alpha_1 - (1 + \epsilon) \frac{2 \beta}{\alpha_1} > 0
\]
for $\epsilon $ chosen sufficiently small. Therefore $M_{\theta}(J_n) \to \infty$, which is a contradiction. 
Hence $ \forall \alpha_0 > \sqrt{2 \beta}$
\[
P \Big( \limsup_{t \to \infty} \dfrac{\log R_t}{\sqrt{t}} \leq \alpha_0 \Big) = 1
\]
and therefore 
\[
P \Big( \limsup_{t \to \infty} \dfrac{\log R_t}{\sqrt{t}} \leq \sqrt{2 \beta} \Big) = 1 \text{.}
\]
This finishes the proof of Proposition \ref{upperbound} and also Theorem \ref{main}
\end{proof}
\bibliographystyle{acm}

\def\cprime{$'$}

\end{document}